\newcommand{\vorticity}{\upomega}
\renewcommand{\epsilon}{\varepsilon}
\def\les{\lesssim}
\def\eps{\varepsilon}
\renewcommand*{\div}{\ensuremath{\mathrm{div\,}}}
\newcommand{\norm}[1]{\left \| #1 \right\|} 
\newcommand{\abs}[1]{\left|#1\right|}
\newcommand*{\supp}{\ensuremath{\mathrm{supp\,}}}
\newcommand{\RR}{\mathbb R}
\newcommand{\NN}{\mathbb N}
\newcommand{\TT}{\mathbb T}
\newcommand{\OO}{\mathcal O}
\renewcommand*{\tilde}{\widetilde}
\renewcommand*{\bar}{\overline}
\newcommand{\T}{{\mathbb T}}
\newtheorem{theorem}{Theorem}[section]
\newtheorem{lemma}[theorem]{Lemma}
\newtheorem{proposition}[theorem]{Proposition}
\newtheorem{corollary}[theorem]{Corollary}
\theoremstyle{definition}
\newtheorem{conjecture}[theorem]{Conjecture} 
\newtheorem{remark}[theorem]{Remark}
\numberwithin{equation}{section}
\def\p{\partial}
\def\bb{w}
\def\f1r{{\frac{1}{r}}  }
\def\p{\partial}
\def\bb{w}
\def\f1r{{\frac{1}{r}}  }
\def\tt{\tilde{t} }
\def\tu{\tilde{u} }
\def\trho{\tilde{\rho}}
\title{{\bf Formation of shocks  for    2D isentropic compressible Euler  }}
\author{
{ \small {\bf Tristan Buckmaster}}\thanks{\footnotesize Department of Mathematics, Princeton University, Princeton, NJ 08544, \href{buckmaster@math.princeton.edu}{buckmaster@math.princeton.edu}.}
\and  
{\small {\bf Steve Shkoller}}\thanks{\footnotesize Department of Mathematics, UC Davis, Davis, CA 95616, \href{shkoller@math.ucdavis.edu}{shkoller@math.ucdavis.edu}.}
\and 
{\small {\bf Vlad Vicol}}\thanks{\footnotesize Courant Institute of Mathematical Sciences, New York University, New York, NY 10012, \href{vicol@cims.nyu.edu}{vicol@cims.nyu.edu}.}
}
\date{} %
\begin{document}

\maketitle

\begin{abstract}
We consider the 2D isentropic compressible Euler equations, with pressure law $p(\rho) = (\sfrac{1}{\gamma}) \rho^\gamma$, with $\gamma >1$. We  provide an elementary  constructive proof of shock formation from  smooth initial datum of finite energy, with no vacuum regions, and with {nontrivial vorticity}.   We prove that for initial data which has minimum slope $- {\sfrac{1}{ \eps}}$, for $ \eps>0$ taken sufficiently small relative to the $\OO(1)$ amplitude,  there exist smooth solutions to the Euler equations which form a shock in time $\OO(\eps)$. The blowup time and location  can be explicitly computed and solutions at the  blowup time are of cusp-type, with H\"{o}lder  $C^ {\sfrac{1}{3}}$ regularity. 

Our objective is the construction of solutions with inherent $\OO(1)$ vorticity at the shock. As such, rather than perturbing from an irrotational regime, we instead construct solutions with dynamics dominated by purely azimuthal wave motion. We   consider homogenous solutions to the Euler equations and use Riemann-type variables to obtain a system of forced transport equations. Using a transformation to modulated self-similar variables and pointwise estimates for the ensuing system of transport equations, we show the global stability, in self-similar time, of a smooth blowup profile.
\end{abstract}
\setcounter{tocdepth}{1}
\tableofcontents

\section{Introduction}

We consider the Cauchy problem for the two-dimensional isentropic compressible Euler equations
\begin{subequations}
\label{eq:Euler}
\begin{align}
\partial_t (\rho u) + \div (\rho\, u \otimes u) + \nabla p(\rho) &= 0 \,,  \label{eq:momentum} \\
\partial_t \rho  + \div (\rho u)&=0 \,,  \label{eq:mass}
\end{align}
\end{subequations}
where $u :\mathbb{R}^2  \times \mathbb{R}  \to \mathbb{R}^2  $ denotes the velocity vector field, $\rho: \mathbb{R}^2  \times \mathbb{R}  \to \mathbb{R}  _+$ denotes the
strictly positive density, 
and the pressure $p: \mathbb{R}^2  \times \mathbb{R}  \to \mathbb{R}  _+$ is defined by the ideal gas law
$$
 p(\rho) = \tfrac{1}{\gamma} \rho^\gamma\,, \qquad \gamma >1 \,.
$$
The sound speed $c(\rho) = \sqrt{ \sfrac{\p p}{\p \rho} }$ is then given by 
$c= \rho^ \alpha $ where  $\alpha = \tfrac{\gamma-1}{2}$.
The Euler equations \eqref{eq:Euler}  are a system of conservation laws:  \eqref{eq:momentum} is the conservation of momentum,  which can be equivalently written as $\p_t u + u \cdot \nabla u + \rho^{\gamma-2} \nabla \rho=0$, and \eqref{eq:mass} is
the conservation of mass.   

This paper is devoted to the construction of  solutions to \eqref{eq:Euler} which form a shock in finite time:
specifically,  starting from smooth initial data with $\OO(1)$ amplitude and a minimum slope of $- {\sfrac{1}{\eps}} $ with $\eps>0$ sufficiently small, we  construct solutions  to the 
2D Euler equations \eqref{eq:Euler} on a time 
interval $t_0 \le t \le T_*$, $t_0=-\eps$ and $T_*=\OO(\eps ^ {\sfrac{5}{4}}) $, 
 for which  $\rho( \cdot , t)$ and $u( \cdot , t) $ remain bounded,  while $| \nabla \rho( \cdot , t)| \to \infty $ and $| \nabla u( \cdot , t)| \to \infty $ as $t \to T_*$; 
moreover, no other type of singularity can form prior to $t = T_*$, and detailed information on the singularity formation at $t=T_*$ is provided, including blowup time, location, and
profile regularity.

We are particularly interested in devising solutions to \eqref{eq:Euler} which have {\it large\footnote{Due to the time rescaling symmetry of the Euler
equations, by which $u^\beta(x,t) = \beta ^{-1} u(x, \beta ^{-1} t)$ and $\rho ^\beta(x,t) = \beta^{- {\sfrac{1}{\alpha }} } \rho(x, \beta ^{-1} t)$ are also solutions to \eqref{eq:Euler}, 
$ \nabla u$ can be made smaller or larger by changing the time interval of the evolution.} vorticity} at the shock, by which we mean solutions which are not small perturbations of 
irrotational flows.   As such, our  strategy will be to construct solutions  that are perturbations of purely azimuthal wave motion whose simplest (constant) profiles are of the
$x^\perp$-type with $\OO(1)$ vorticity at this most basic level.    As we shall describe in great detail below, this is in  contrast to those solutions which are small perturbations of irrotational simple plane waves.

We are thus motivated to develop a framework of analysis for solutions which are perturbations of purely azimuthal waves. Obviously, polar coordinates provide a natural
setting  for describing such perturbative solutions, but more fundamentally, we  have discovered  that the use of homogeneous solutions to \eqref{eq:Euler} leads to a
remarkable reduction of the Euler dynamics precisely to this nearly-azimuthal wave regime, in which 
 bounded azimuthal waves steepen and then shock, while radial
waves (and their slopes) remain bounded.    Owing to the inherent vorticity in the most basic
wave motion, the solutions are fundamentally two-dimensional  in their evolution.   We provide a  precise description of the shock formation for such Euler solutions, including the {\it blowup time and location},  by a transformation to  self-similar variables that contain dynamically evolving modulation functions that keep track of the location, time, and amplitude of 
the
blowup.   At the blowup time $t=T_*$, the wave profile is of H\"{o}lder-class $C^{ {\sfrac{1}{3}} }$.  In the special case that the adiabatic exponent  $\gamma$ is equal to $3$ and for purely azimuthal initial velocity fields, a series of surprising cancellations reduces the 2D Euler dynamics to an  elementary study of  the Burgers equation.  The solution for
the  special case that
 $\gamma=3$  can be viewed as the purely azimuthal wave motion, and its shock formation is 
completely  characterized for all time.
 
\begin{theorem}[\bf Rough statement of  the main theorem]
For an open set of smooth initial data with $\OO(1)$ amplitude and with minimum initial slope given at initial time $t_0$ to equal  $ -{\sfrac{1}{\eps}} $, for $\eps>0$ taken sufficiently small, 
 there exist smooth solutions of the Euler equations with $\OO(1)$ vorticity, which form an asymptotically self-similar shock in finite time $T_* $, such that $T_*-t_0 = \OO( \eps)$.    The  solutions
 have $\OO(1)$ vorticity at the shock,  are dominated by
 azimuthal wave motion, and the   location and time of the first singularity  can be explicitly computed. The blowup profile at the
first singularity is shown to be a cusp with $C^ {\sfrac{1}{3}} $ regularity.
\end{theorem}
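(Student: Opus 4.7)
Following the strategy announced in the abstract, I would pass to polar coordinates $(r,\theta)$ and make a homogeneous ansatz in which all radial dependence is fixed by the scaling symmetry of \eqref{eq:Euler}, reducing the system to a closed PDE for angular profiles $(\ur,\ut,\rho)(\theta,t)$. I would then form azimuthal Riemann-type variables $w=\ut+F(\rho)$ and $z=\ut-F(\rho)$ with $F$ chosen to diagonalize the azimuthal principal part. The equation for $w$ then becomes an inhomogeneous Burgers-type transport equation $\p_t w + \lambda(w,z)\,\pt w = \mathcal F(z,\ur,\rho)$, while $z$ and $\ur$ satisfy transport equations whose characteristics propagate transversally to those of $w$; this realizes the paper's slogan that $w$ steepens and shocks while $z,\ur$ remain bounded. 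In the special case $\gamma=3$ with $\ur\equiv 0$ one checks by direct substitution that $\mathcal F$ vanishes identically and $w$ solves pure Burgers, which yields the explicit global description.

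\textbf{Self-similar rescaling.} Introduce modulation functions $\tau(t),\xi(t),\kappa(t)$ tracking the blowup time, angular location, and amplitude, together with $s=-\log(\tau(t)-t)$ and $y=(\theta-\xi(t))/(\tau(t)-t)^{\sfrac{3}{2}}$, and rescale $w-\kappa(t)$ to $W(y,s)$ so that at leading order $W$ solves the stationary Burgers self-similar ODE, whose unique smooth odd solution $\bar W$, defined implicitly by $\bar W+\bar W^3=-y$, obeys $\bar W(y)\sim -\sgn(y)|y|^{\sfrac{1}{3}}$ at infinity. Three normalization constraints at $y=0$ (vanishing value, slope $-1$, vanishing second derivative) pin down $\dot\tau,\dot\xi,\dot\kappa$ and kill the three neutral modes of the linearization about $\bar W$.

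\textbf{Nonlinear stability.} The linearization about $\bar W$ is, modulo the three modulated modes, a transport operator with strictly stable spectrum. I would propagate pointwise weighted bounds on $\tilde W:=W-\bar W$ along the rescaled characteristics, and simultaneously bound the sub-dominant fields built from $z$ and $\ur$ in lower-order weighted norms, using their $\OO(\eps)$ smallness at initial time and the transversality of their transport relative to $\bar W$-characteristics. A bootstrap on (i) pointwise norms for $\tilde W$ and the sub-dominant fields, (ii) the modulation parameters, and (iii) the low-order Taylor coefficients of $W$ at $y=0$ should strictly improve the ansatz on its maximal interval, yielding existence for all $s\in[s_0,\infty)$, i.e.\ up to $t=T_*=\lim_{s\to\infty}\tau$. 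The cusp statement $u,\rho\in C^{\sfrac{1}{3}}$ at $T_*$ then follows from $\bar W(y)\sim|y|^{\sfrac{1}{3}}$ together with the Burgers rescaling, under which the amplitude of $w$ is preserved while its slope blows up.

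\textbf{Main obstacle.} The crux is to control the forcing in the $W$-equation (coming from the sub-dominant fields and from geometric corrections due to the polar change of variables) in a norm compatible with the Burgers self-similar scaling, without destabilizing the three neutral modes absorbed by the modulation. I expect this to require both weighted pointwise estimates and the structural fact that the sub-dominant fields are transported transversally to the $\bar W$-characteristics, and to consume the genuine smallness coming from $\eps\ll 1$. A secondary difficulty is that the modulation ODE system must be integrated globally in $s$ while maintaining uniform bounds; this in turn demands precise quantitative control of the low-order Taylor coefficients of $W$ at the origin and a careful choice of the weighted pointwise norm so that it closes under both the transport and the modulation dynamics.
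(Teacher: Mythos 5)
Your proposal follows essentially the same route as the paper: the homogeneous (radially self-similar) reduction of 2D Euler in polar coordinates to Riemann-type variables $(w,z,a)$, the modulated self-similar rescaling with $\tau,\xi,\kappa$ fixed by three constraints at the origin, a bootstrap of weighted pointwise bounds on $W-\bar W$ and on the subdominant fields propagated along characteristics (the paper implements your ``stable linearization'' step via explicit damping lower bounds and maximum-principle lemmas rather than spectral language), and the $C^{\sfrac{1}{3}}$ cusp read off from the $|x|^{\sfrac 13}$ behavior of $\bar W$. This matches the paper's argument in structure and in the identification of the main difficulties, so no substantive gap to flag.
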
 

The precise statement of the main theorem is given in Theorem \ref{thm:general}, while the special case that $\gamma=3$ is treated in  Theorem \ref{thm:g3}.

\subsection{A brief history of the analysis of shock formation for the Euler equations}
The  mathematical analysis of shock formation for the Euler equations has a  long and rich history, particularly in the case of  one space dimension, which allows the full power of the method of characteristics
to be employed.   In 1D,  the velocity $u$ is a scalar and \eqref{eq:Euler} takes the form  
$$\p_t u + uu_x + \rho^{\gamma-2}  \rho_x=0\,, \qquad \p_t \rho+ (\rho u)_x=0\,.$$
Riemann \cite{Ri1860} devised the two {\em invariant}  functions $z= u - \sfrac{c}{\alpha }$ and $w=u+\sfrac{c}{\alpha } $ which are constant along the characteristics
of the two wave speeds $ \lambda_1 = u-c$ and $ \lambda _2 = u + c$:
$$\p_t z + \lambda_1 z_x =0 \,, \qquad  \p_t w + \lambda _2 w_x =0 \,.$$
He proved that from smooth data, shocks can form  in finite time.   The 1D  isentropic Euler equations are an example of a $2 \times 2$ system of conversation
laws.  Using Riemann invariants,  Lax \cite{Lax1964} proved that finite-time shocks can form from smooth data for general  $2 \times 2$ genuinely nonlinear 
hyperbolic systems and  Majda \cite{Ma1984} gave a geometric proof which also allowed for $2 \times 2$ systems with  linear degeneracy;   John \cite{John1974} then proved finite-time
shock formation for $n\times n$ genuinely nonlinear hyperbolic systems;  Liu \cite{Li1979} then generalized this result.  Klainerman-Majda \cite{KlMa1980} proved the formation
of singularities for second-order quasilinear  wave equations which includes the nonlinear vibrating string.   See   the book of  Dafermos \cite{Da2010} for a more extensive bibliography  of 1D results.

In multiple space dimensions,  Sideris \cite{Si1985} proved that $C^1$  regular solutions to \eqref{eq:Euler} have a finite lifespan by establishing 
differential inequalities for certain integrals which lead to a proof by contradiction; in particular, he showed that $\OO( \exp(1/ \eps ))$  is an upper bound for the lifespan (of 3D flows) for
data of size $ \eps $.  The nature of the proof did not, however,  reveal the type of singularity that develops, but rather, that some  finite-time breakdown must occur.   

The first proof of shock formation for the compressible Euler equations in the multi-dimensional setting  was given by
Christodoulou \cite{Ch2007} for  relativistic fluids and with the restriction of {\it irrotational} flow.     Later Christodoulou-Miao \cite{ChMi2014} used the same framework to study shock formation in the non-relativistic setting and also for irrotational flow.   Christodoulou's
method is based upon a novel eikonal function (see also Christodoulou-Klainerman \cite{ChKl1993} and Klainerman-Rodnianski \cite{KlRo2003}), whose level sets correspond to characteristics of the flow; by introducing the inverse foliation density, a function which is inversely 
proportional to time-weighted derivatives of the eikonal function, Christodoulou proved that shocks form when the inverse foliation density vanishes (i.e., characteristics cross), and
that no other breakdown mechanism can occur prior to such shock formation.    The proof relies on the use of a geometric coordinate system, along which the solution has long time 
existence, and remains bounded, so that the shock is constructed by the singular (or degenerate) transformation from geometric to Cartesian coordinates.   
For the restricted shock development problem, in which the  Euler solution is continued past the time of first singularity but vorticity production is neglected, see the discussion in 
Section 1.6 of \cite{Ch2019}.   Starting with piecewise regular initial data for which there is a closed curve of discontinuity, across which the density and normal component
of velocity experience a jump, Majda \cite{Ma1983,Ma1983b,Ma1984}, proved (for more general flows than the 2D isentropic flows) that  such a shock can always be continued for a short
 interval of time, but with derivative loss.  For such shock initial data, M\'{e}tivier \cite{Met2001}  later reduced the
derivative loss to  only a $ {\sfrac{1}{2}} $-derivative.   Gues-M\'{e}tivier-Williams-Zumbrun \cite{GuMeWiZu2005} studied the existence and stability of this multidimensional
shock propagation problem in the vanishing viscosity limit.

A special feature of  irrotational flows is that the Euler equations can be expressed as a second-order quasilinear wave equation with respect to the velocity potential.   The first results 
on shock formation for 2D quasilinear wave equations which do not satisfy Klainerman's null condition  \cite{Kl1984} were established by Alinhac \cite{Al1999a,Al1999b}, wherein a detailed
description of the blowup was provided.   The geometric framework of \cite{Ch2007} has influenced more recent analysis of shock formation for quasilinear wave equations.  
Holzegel-Klainerman-Speck-Wong \cite{HoKlSpWo2016}  have explained the mechanism for stable shock formation for certain types of quasilinear wave equations with small data in three dimensions.    Speck \cite{Sp2016} generalized and unified earlier work on singularity formation for both
covariant and non-covariant scalar wave equations of a certain form.   He proved  that whenever the nonlinear terms fail 
Klainerman's  null condition \cite{Kl1984}, shocks develop in solutions arising from an open set of small data, and can thus be viewed as a converse to the well-known result of Christodoulou-Klainerman \cite{ChKl1993}, which showed that when the classic null condition is verified, small-data global existence holds.   For quasilinear wave equations that are  derived from the least action principle and which satisfy the null condition,  Miao-Yu \cite{MiYu2017} proved shock formation using the so-called short pulse  data.

The first proof of shock formation
for fluid flows with vorticity   was given by Luk-Speck  \cite{LuSp2018}, for the 2D isentropic Euler equations with vorticity. The presence of nontrivial 
vorticity in their analysis does not only allow for a much larger class of data, but also has two families of waves being propagated, sound waves and vorticity waves, thus allowing for 
multiple characteristics (wave speeds) to interact.   Their proof uses Christodoulou's geometric framework from \cite{Ch2007, ChMi2014}, but develops new methods to contend with the 
aforementioned vorticity waves, establishes new estimates for the regularity of  the transported vorticity-divided-by-density, and relies crucially on a new framework for describing the 2D 
compressible Euler equations as a coupled system of covariant wave and transport equations.

Luk-Speck consider in \cite{LuSp2018} solutions to Euler which  are {\it small} perturbations  of a subclass of outgoing simple plane waves.     
In the 2D  Cartesian plane, with coordinates $(x_1,x_2)$, an outgoing simple plane wave is defined as a solution to the Euler equations \eqref{eq:Euler} which moves to the right along
the $x_1$ axis,   does not depend on
$x_2$, and  has vanishing  first Riemann invariant $u^1 - c$.  The 
{\it smallness} of the perturbation of the plane wave is measured in terms of the ratio of the maximum wave amplitude to  the minimum (negative) slope of the initial wave profile.
Specifically, they construct solutions which are  small perturbations of the irrotational simple plane waves, in which  the transverse derivative (to the acoustic characteristics) of
$u^1$ blows up, while the tangential derivatives (to the acoustic characteristics)  of $(\rho,u^1,u^2)$ remain bounded, and  vorticity is non-vanishing  and small at the shock.

\subsection{Shock formation with vorticity and the perturbation of purely azimuthal waves}
Let us now describe the type of shock wave solutions that we construct and compare them with those of   \cite{LuSp2018}.
  As noted above, we do not consider perturbations of simple plane waves, but instead construct solutions which  are  perturbations of azimuthal waves.
   
Using 2D polar coordinates $(r,\theta)$, we denote the velocity components by $u=\left(u_r(r,\theta,t),u_\theta(r,\theta,t)\right)$.    We consider initial conditions $\left(\rho( \cdot , t_0), u_r( \cdot , t_0),
u_\theta ( \cdot , t_0)\right)$ which have $\OO(1)$ amplitude, but with $\p_\theta u_\theta( \cdot , t_0)$ and $\p_\theta \rho( \cdot , t_0)$
having a minimum (negative) value of $- {\sfrac{1}{\eps}} $, with $0< \eps \ll 1$ taken sufficiently small.   There are  two Riemann invariants for the azimuthal flow, which we write
as $ \mathcal{R} _\pm = u_\theta \pm \frac{2}{\gamma-1} \rho^{\sfrac{(\gamma-1)}{2}}$.
The solutions we construct satisfy the following  conditions:
\begin{enumerate}
  \setlength\itemsep{0em}
\renewcommand{\theenumi}{(\alph{enumi})}
\renewcommand{\labelenumi}{\theenumi}
\item \label{item1}
solutions $(\rho, u_r,u_\theta)$ have  $\OO(1)$ bounds in $L^ \infty $  for $t\in [t_0,T_*)$ with
linear variation in the radial  $r$ direction for $u_r$ and $u_\theta$ and $r^ {\sfrac{2}{(\gamma-1)}}$ variation for $\rho$;  
\item $\abs{ \p_\theta \mathcal{R} _+}$, $\abs{ \p_\theta u_\theta}$, and  $\abs{\p_\theta  \rho  }$ are $ \OO(\sfrac{1}{\eps})$ at initial time, and these quantities blow up at  time $t=T_*$ with
a rate proportional to $ {\sfrac{1}{(T_*-t)}} $, where $T_*-t_0 = \OO( \eps)$;
\item  the blowup profile is of  cusp-type with $u_\theta( \cdot , T_*)$  and $\rho( \cdot ,T_*) $ in the H\"{o}lder space $C ^ {\sfrac{1}{3}} $; 
\item $ \p_\theta \mathcal{R} _-$ remains bounded on on $[t_0, T_*) $;
\item  $\p_r$ of $(\rho,u_r,u_\theta)$ and $\p_\theta u_r$ are bounded on $[t_0, T_*) $;
\item \label{item6} the vorticity $\partial_r u_\theta - \tfrac{1}{r} \partial_\theta u_r+ \tfrac{1}{r} u_\theta$ is non-vanishing and bounded at the shock.
\end{enumerate}

There is some correspondence between the properties \ref{item1}--\ref{item6} of our solutions and the solutions constructed by Luk-Speck \cite{LuSp2018}, in that we are perturbing
purely azimuthal wave motion (in the $\theta$-direction), and in \cite{LuSp2018} they are perturbing simple plane wave motion (in the $x_1$-direction).   A primary difference 
is that  the purely azimuthal wave already has nontrivial vorticity, while the simple plane wave is irrotational, and so we 
are constructing solutions that are perturbations of flows with nontrivial vorticity.   Furthermore, our method allow us to provide a fairly detailed description of the blowup profile for
$u_\theta ( \cdot , T_*)$ and $\rho( \cdot , T_*)$: 
the slope becomes infinite along a line segment, and each function is $C^{\sfrac{1}{3}} $ in space.

As we shall next describe, the method we develop to construct   shock wave solutions 
is very different from the methods of \cite{Ch2007,ChMi2014,LuSp2018}; we rely upon a transformation to modulated self-similar variables together with the  
fact that 2D purely azimuthal wave motion is governed by the dynamics of the Burgers equation; we shall explain how our analysis relies on properties of 
nonlinear transport equations together with explicit properties of the asymptotically stable self-similar profile.

\section{Outline of the  proof}
\subsection{A new   class of solutions that shock}  In order to study perturbations of purely azimuthal waves, we write the Euler equations
 \eqref{eq:Euler} in polar coordinates for the variables $ (\rho, u_r, u_\theta)$ as the following system of conservation laws:
\begin{subequations}
\label{eq:Euler:polar}
\begin{align}
\left(\partial_t  + u_r\partial_r + \tfrac{1}{r} u_\theta \partial_{\theta}\right) u_r -\tfrac{1}{r}u_{\theta}^2+\rho^{\gamma-2} \partial_r \rho &=0 \,, \\
\left(\partial_t  + u_r\partial_r +\tfrac{1}{r} u_\theta \partial_{\theta}\right)u_\theta+\tfrac{1}{r}u_r u_\theta +\tfrac{1}{r}\rho^{\gamma-2} \partial_\theta\rho&=0 \,,  \\
\left(\partial_t  + u_r\partial_r + \tfrac{1}{r} u_\theta \partial_{\theta}\right) \rho + \rho\left( \tfrac{1}{r} u_r + \partial_r u_r + \tfrac{1}{r} \p_\theta u_\theta \right)  &=0 \,.
\end{align}
\end{subequations}
  These equations are solved
with  $\theta \in \T=[-\pi,\pi]$ , $r>0$ and $t\in[ t_0,T]$.
Defining the fluid vorticity  $\upomega = \tfrac{1}{r} \p_r (r u_\theta) - \tfrac{1}{r} \partial_\theta u_r$, we shall make use of the fact that $\upomega/\rho$ is transported as
\begin{align} 
\p_t  \tfrac{\upomega }{\rho }  + u \cdot \nabla \tfrac{\upomega }{\rho }  =0 \,. \label{omega_rho_transport}
\end{align} 

For initial density $\rho_0>0$ that has no vacuum regions, and for {\em nontrivial initial vorticity}
$$
 \upomega(r,\theta,t_0) =  \partial_r u_\theta(r,\theta,t_0) - \tfrac{1}{r} \partial_\theta u_r(r,\theta,t_0) + \tfrac{1}{r} u_\theta(r,\theta,t_0) \neq 0 \,,
$$
we construct smooth solutions to \eqref{eq:Euler} that form a shock in finite-time.  So that our solutions will be perturbations of azimuthal waves,  we  shall  consider  homogeneous solutions. 

To this end, motivated by the homogeneous solutions introduced for studying singularity formation in incompressible flows by Elgindi and Jeong \cite{ElJe2016}, 
we consider the new variables $\tu$ and $\trho$ such that
$$
u(r,\theta,t) = r \tu(r,\theta, t) \text{ and } \rho(r,\theta,t) =r^{ \frac{2}{ \gamma-1 }} \trho(r,\theta,t) \,,  
$$
and recalling that $\alpha = {\frac{\gamma -1}{2}}$, 
with respect to these new variables, the 
system \eqref{eq:Euler:polar} takes the form:
\begin{subequations}
\label{eq:Euler:polar2}
\begin{align}
\left(\partial_t  + \tu_r r\partial_r + \tu_\theta\partial_{\theta}\right) \tu_r  + \tu_r^2-\tu_\theta^2+  \tfrac{1}{ \alpha } \trho^{2 \alpha } +\trho^{2 \alpha -1 } r\p_r \trho    &=0\,,\\
\left(\partial_t  + \tu_r r\partial_r +  \tu_\theta\partial_{\theta}\right)\tu_\theta+2\tu_r \tu_\theta+   \trho^{2 \alpha -1 }\partial_\theta\trho&=0\,, \\
\left(\partial_t  + \tu_r r\partial_r + \tu_\theta\partial_{\theta}\right) \trho + \tfrac{\gamma}{ \alpha } \tu_r \trho +  \trho \left( r\p_r \tu_r + \p_\theta \tu_\theta\right) &=0 \, .
\end{align}
\end{subequations}
Notice that all  powers of $r$ have cancelled (expect for the $r \p_r$ operator which is dimensionless), and hence, 
 if at time $t=t_0$, the initial data is given as
\begin{equation}\label{eq:ansatz}
 \tu_r(r,\theta,t_0) = a_0(\theta) \,, \ \ 
 \tu_\theta(r,\theta,t_0) =  b_0(\theta)\,, \ \ 
 \trho(r,\theta,t_0) = P_0(\theta)\,,
\end{equation} 
where $a_0$, $b_0$, and $P_0$ are independent of $r$, then $\tu$ and $\trho$ remain independent of $r$ for as long as the solution stays smooth
(and hence unique), and thus
 the system \eqref{eq:Euler:polar2} reduces to
\begin{subequations}
\label{eq:Euler:polar3}
\begin{align}
\left(\partial_t  + b\partial_{\theta}\right) a  + a^2-b^2+  \alpha ^{-1} P^{2 \alpha }   &=0 \label{g3_a_evo}\\
\left(\partial_t  + b\partial_{\theta}\right)b+2a b+  P^{ 2 \alpha -1}\partial_\theta P&=0 \\
\left(\partial_t  + b\partial_{\theta}\right) P+  \tfrac{\gamma}{ \alpha } a P+  P  \p_\theta b &=0 \, ,
\end{align}
\end{subequations}
and then the solution to the Euler equations \eqref{eq:Euler:polar} is given by
\begin{equation}\label{scale0}
u_\theta(r,\theta,t) = r b( \theta, t) \,, \ \ u_r(r,\theta,t) = r a( \theta, t)\text{ and } \rho(r,\theta,t) =r^{1/ \alpha  } P(\theta,t) \,.
\end{equation} 

The fluid vorticity  and fluid divergence corresponding to the ansatz \eqref{eq:ansatz} are given by 
\begin{subequations} 
\begin{align}
\upomega(r,\theta,t) & = 2 b(\theta,t) - \p_\theta a(\theta,t) \,, \label{omega_abp} \\
\operatorname{div} u(r,\theta, t) &  = 2 a + \p_\theta b \,, \label{div_abp}
\end{align}
\end{subequations} 
so that the vorticity is  therefore nontrivial as long as $2 b\not\equiv \p_\theta a$.    Setting 
$$\varpi = \frac{ 2b - \p_\theta a}{P}\,,$$
from equation \eqref{omega_rho_transport}, we have that
\begin{equation}\label{eq:varpi}
\p_t\varpi + b \p_\theta \varpi = \tfrac{a}{ \alpha } \varpi \,.
\end{equation}

Next, we define the {\em Riemann invariants}  $w$ and $z$ associated to the tangential velocity $b$ and density $P$,  and their associated wave speeds 
$\lambda _1$, $\lambda _2$,  as
\begin{subequations} 
\label{eq:riemann}
\begin{alignat}{2}
w&= b+  {\frac{1}{\alpha }} P^ \alpha \,, \qquad  &&z= b- {\frac{1}{\alpha }}  P^ \alpha  \,,\\
\lambda _1&= b - P^ \alpha= \frac{1- \alpha }{2} w +  \frac{1+ \alpha }{2} z \,, \qquad  &&\lambda _2= b+ P^ \alpha =  \frac{1+ \alpha }{2} w +  \frac{1- \alpha }{2} z  \,.
\end{alignat}   
\end{subequations} 
 Then, the $(a,b,P)$-system \eqref{eq:Euler:polar2} can be written as the following system for the variables $(a,z,w)$:
\begin{subequations}
\label{eq:Euler:wza0}
\begin{align}
\big(\partial_t   +  \lambda _2\partial_{\theta} \big) w  + \tfrac{a}{2}  \big( (1-2 \alpha ) z + (3+2 \alpha )w\big)   &=0\,,\\
\big(\partial_t  +  \lambda _1 \partial_{\theta}\big)z + \tfrac{a}{2}  \big( (1-2 \alpha ) w + (3+2 \alpha )z\big) &=0\,, \\
\big(\partial_t   + \tfrac{w+z}{2}  \partial_{\theta}\big)a + a^2 - \tfrac{1}{4} (w+z)^2 + \tfrac{\alpha }{4} (w-z)^2  &=0 \, .
\end{align}
\end{subequations}
Notice that while $z$ and $w$ are not actual invariants, the advantage of the $(a,z,w)$-system is that no derivatives appear in the  forcing  of the transport.

In order to transform the $w$ and $z$ equations  into the form of a perturbed Burgers-type equation, we
define $t = \tfrac{1+ \alpha }{2} \tt $ so that $\p_t = \tfrac{1+ \alpha }{2} \p_{\tt}$.   For notational simplicity, we  shall write $t$ for $\tt$, in which case \eqref{eq:Euler:wza0}
becomes:
\begin{subequations}
\label{eq:euler:wza}
\begin{align}
\partial_t w +\left(w+\tfrac{1-\alpha}{1+\alpha}z\right)\partial_{\theta}w 
&= -a  \left(\tfrac{1-2\alpha}{1+\alpha} z+ \tfrac{3+2\alpha}{1+\alpha} w\right) 
\label{eq:w:evo} \,,\\
\partial_t z +\left(z+\tfrac{1-\alpha}{1+\alpha}w\right)\partial_{\theta}z 
&=  -a  \left( \tfrac{1-2\alpha}{1+\alpha} w+ \tfrac{3+2\alpha}{1+\alpha} z\right)
\label{eq:z:evo} \,, \\
\partial_t a +\tfrac{1}{1+\alpha} (w+z) \partial_{\theta}a  
&=-\tfrac{2}{1+\alpha}a^2+\tfrac{1}{2(1+\alpha)}(w+z)^2 - \tfrac{\alpha}{2(1+\alpha)}(w-z)^2   \,.
\label{eq:a:evo}
\end{align}
\end{subequations}
While the local-in-time  well-posedness in Sobolev spaces of the system \eqref{eq:euler:wza} follows from the well-posedness of the Euler equations, we shall take the opposite view
that solutions to the Euler equations are constructed from solutions of  \eqref{eq:euler:wza} together with \eqref{scale0} and \eqref{eq:riemann}.   

\begin{lemma}\label{lem:awz_WP}
For initial data $(w,z,a)|_{t=t_0}=(w_0,z_0,a_0) $ in $C^k(\T)$, $k \ge 1$, there exists a time $T$ depending on the $C^k(\T)$-norm of this data, such that there exists a unique solution $(w,z,a) \in C([t_0,T]; C^k(\T)$ to  \eqref{eq:euler:wza}.   Furthermore, the solution continues to exist on $[t_0,T_*]$  if 
\begin{align}
 \int_{t_0}^{T_*} \left(  \| \p_\theta w( \cdot ,t) \|_{L^ \infty (\T) } + \|\p_\theta z ( \cdot ,t) \|_{L^ \infty (\T) } + \|   a( \cdot ,t) \|_{L^ \infty (\T) }  \right) dt < \infty  \,.
 \label{eq:continuation:criterion}
\end{align}
\end{lemma}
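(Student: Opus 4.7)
The system \eqref{eq:euler:wza} is quasilinear hyperbolic, but the forcing is \emph{semilinear} (polynomial in $(w, z, a)$ with no derivatives), and this is the structural feature I would exploit throughout. For local existence, I would set up a direct Picard iteration in $C^k(\T)$: given iterates $(w^{(n)}, z^{(n)}, a^{(n)}) \in C([t_0, t_0+T]; C^k(\T))$, freeze the transport coefficients $\lambda_2^{(n)} := w^{(n)} + \tfrac{1-\alpha}{1+\alpha} z^{(n)}$, $\lambda_1^{(n)} := z^{(n)} + \tfrac{1-\alpha}{1+\alpha} w^{(n)}$, and $\mu^{(n)} := \tfrac{1}{1+\alpha}(w^{(n)} + z^{(n)})$, and solve the three resulting \emph{linear} transport equations for $(w^{(n+1)}, z^{(n+1)}, a^{(n+1)})$ along the corresponding characteristic flows. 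Differentiating $k$ times and applying Gronwall along characteristics yields a closed $C^k$ bound on the iterates for $T>0$ small in terms of $\|(w_0, z_0, a_0)\|_{C^k}$. A Gronwall estimate on the differences shows the iterates are Cauchy in $C([t_0, t_0+T]; C^{k-1}(\T))$ (the transport-coefficient difference loses one derivative, but the semilinear forcing does not), so the limit is the desired solution in $C([t_0, t_0+T]; C^k(\T))$. Uniqueness follows from the same difference estimate, applied in the original $C^k$ class without derivative loss.

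For the continuation criterion, I would argue by contradiction: suppose the solution cannot be continued beyond some $T_* \in (t_0, T]$ while
\[
B(T_*) := \int_{t_0}^{T_*} \bigl( \|\partial_\theta w(\cdot, t)\|_{L^\infty(\T)} + \|\partial_\theta z(\cdot, t)\|_{L^\infty(\T)} + \|a(\cdot, t)\|_{L^\infty(\T)} \bigr)\, dt < \infty.
\]
Along the $w$-characteristic, $\tfrac{d}{dt} w = -a \cdot (\text{linear in } w, z)$, so Gronwall using $\int\|a\|_{L^\infty}\,dt < \infty$ gives a uniform $L^\infty$ bound on $w$ and $z$ on $[t_0, T_*)$; the ODE for $a$ along its characteristic then yields a uniform $L^\infty$ bound on $a$. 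For the top-order norm, applying $\partial_\theta^k$ to the $w$-equation gives
\[
\partial_t (\partial_\theta^k w) + \lambda_2 \partial_\theta (\partial_\theta^k w) = - \sum_{j=1}^{k} \tbinom{k}{j} (\partial_\theta^j \lambda_2) (\partial_\theta^{k-j+1} w) + \partial_\theta^k (\text{RHS}),
\]
and analogous equations for $\partial_\theta^k z, \partial_\theta^k a$. Summing the three transport inequalities and applying Gronwall along the flows of $\lambda_1, \lambda_2, \mu$ bounds $\sum_{j=0}^{k} \|\partial_\theta^j(w, z, a)(\cdot, t)\|_{L^\infty}$ by its initial value times $\exp(C B(T_*))$ on $[t_0, T_*)$. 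In particular the $C^k$ norm stays finite as $t \to T_*^-$, and reapplying local existence at $t = T_*$ extends the solution past $T_*$, contradicting maximality.

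The step I expect to require the most care is the top-order commutator bookkeeping. In the expansion of $\partial_\theta^k(\lambda_2 \partial_\theta w)$, two terms involve top-order factors of $(w, z)$: the term $k (\partial_\theta \lambda_2)(\partial_\theta^k w)$, with top-order $\partial_\theta^k w$ multiplied by the $L^1_t L^\infty_x$ quantity $\partial_\theta \lambda_2 = \partial_\theta w + \tfrac{1-\alpha}{1+\alpha} \partial_\theta z$, and the term $(\partial_\theta^k \lambda_2)(\partial_\theta w)$, with top-order $\partial_\theta^k (w, z)$ multiplied by the $L^1_t L^\infty_x$ quantity $\partial_\theta w$. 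Both are controlled precisely by the integrand in \eqref{eq:continuation:criterion}. The intermediate terms $\partial_\theta^j \lambda_2 \cdot \partial_\theta^{k-j+1} w$ for $2 \le j \le k-1$ carry at most $k-1$ derivatives on either factor, and close by induction on $k$. This structural observation — that no higher-derivative quantity has to be controlled in \eqref{eq:continuation:criterion} — is a consequence of the semilinearity of the forcing and the linearity of $\lambda_1, \lambda_2, \mu$ in $(w, z)$.
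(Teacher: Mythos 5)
Your proposal is correct, but it follows a genuinely different route from the paper on both steps, so a comparison is worthwhile.

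For local existence, the paper passes to Lagrangian coordinates: it introduces the three characteristic flows $\psi_w, \psi_z, \psi_a$, sets $\mathcal{W} = w\circ\psi_w$, $\mathcal{Z} = z\circ\psi_z$, $\mathcal{A} = a\circ\psi_a$, and runs a Picard iteration on the resulting coupled system of (Banach-space-valued) ODEs. You instead freeze the transport coefficients in Eulerian variables and iterate on linear transport equations, with the standard contraction-in-$C^{k-1}$/uniform-bound-in-$C^k$ scheme. Both are standard routes to local well-posedness for a semilinear transport system; neither buys more than the other here.

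The more interesting divergence is in the continuation criterion, specifically how each argument dispenses with $\|\partial_\theta a\|_{L^\infty}$, which does not appear in \eqref{eq:continuation:criterion} but does appear (linearly, multiplied by $w,z$) whenever you differentiate the $w$- and $z$-forcing terms. The paper handles this by invoking the transported-vorticity identity $\varpi = (2b-\partial_\theta a)/P$ and equation \eqref{eq:varpi}: since $\varpi$ obeys a simple transport equation forced by $a/\alpha$, it stays bounded if $a\in L^1_t L^\infty_x$; combined with the $L^\infty$ bounds on $b$ and $P$, this yields an a priori bound on $\partial_\theta a$ independently of the rest of the Gronwall argument. You instead close on $\partial_\theta a$ directly through its own transport equation, implicitly using that the differentiated quadratic term $-\tfrac{2}{1+\alpha}a^2$ produces $-\tfrac{4}{1+\alpha}a\,\partial_\theta a$, which attaches to the damping coefficient $\partial_\theta\mu$ (both then lying in $L^1_t L^\infty_x$ by the criterion) rather than acting as a source, while the remaining source $(w\pm z)\,\partial_\theta(w\pm z)$ is controlled by the criterion together with the $L^\infty$ bounds on $w,z$. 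This is a legitimate alternative: it is more self-contained (you never need to notice the vorticity conservation \eqref{eq:varpi}), at the cost of being somewhat more computational and less structurally transparent. Two small points of hygiene: when you write $\exp(C\,B(T_*))$, the exponent should also carry a term proportional to $T_*-t_0$ coming from the bounded (but not $L^1_t$-small) coefficients such as $\|w\|_{L^\infty},\|z\|_{L^\infty}$, and Cauchy-in-$C^{k-1}$ only places the limit in $C^{k-1}$ as stated, so the upgrade to $C([t_0,T];C^k)$ requires the (routine, but unstated) extra argument using the uniform $C^k$ bounds on the iterates.
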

 
\begin{proof}[Proof of Lemma~\ref{lem:awz_WP}]
We set $ \beta_0 = \tfrac{1-\alpha}{1+\alpha}$, $\beta_1=  \tfrac{1-2\alpha}{1+\alpha}$,  $\beta_2=  \tfrac{3+2\alpha}{1+\alpha}$,
$ \beta _3 =\tfrac{1}{1+\alpha}$,  and define the characteristics 
\[
\p_t \psi_w =  w \circ \psi_w + \beta_0 z \circ \psi_z \,, \ \  \p_t \psi_z = z \circ \psi_z + \beta_0 w \circ \psi_w \,, \ \  \p_t \psi_a =  \beta _3 (w \circ \psi_w + z \circ \psi_z) 
\]
which are the identity at time $t_0$. Letting $ \mathcal{W} = w \circ \psi_w$, $ \mathcal{Z} = z \circ \psi_z$, and $ \mathcal{A} = a \circ \psi_a$, the system \eqref{eq:euler:wza} is equivalent to 
\begin{align*} 
& \p_t \mathcal{W} = - \mathcal{A}( \beta _1 \mathcal{Z} + \beta _2 \mathcal{W} ) \,,  \  \p_t \mathcal{Z} = - \mathcal{A}( \beta _1 \mathcal{W} + \beta _2 \mathcal{Z} ) \,, 
\  \p_t \mathcal{A} =  \beta _3 \left[  -2 \mathcal{A} ^2 + \tfrac{1}{2} ( \mathcal{W} + \mathcal{Z} )^2 - \tfrac{\alpha }{2}( \mathcal{W} - \mathcal{Z} )^2 \right],
\end{align*} 
with initial data $(\mathcal{W} , \mathcal{Z} , \mathcal{A} )_{ t= t_0} = (w_0,z_0,a_0) \in C^k(\T)$.   Then, a standard Picard iteration
argument proves the existence, uniqueness, and well-posedness of this coupled system of six ODEs some time interval $[t_0,T]$, in the class $C([t_0,T],C^k(\TT))$.  
This local in time solution may be continued as long as the transport velocities remain bounded in $L^1_t {\rm Lip}_x$. Lastly, we have excluded $\norm{w}_{L^\infty}$ and $\norm{z}_{L^\infty}$ from \eqref{eq:continuation:criterion} because these remain finite if $a\in L^1_t L^\infty_x$, while $\norm{\p_\theta a}_{L^\infty}$ remains bounded due to  \eqref{eq:varpi}.
\end{proof}

From a solution 
$(w,z,a)$ of \eqref{eq:euler:wza}, we obtain a solution to the Euler equations \eqref{eq:Euler} using that
$b= \tfrac{w+z}{2}$, 
$P= \left(\tfrac{\alpha (w-z)}{2}\right)^{\sfrac 1 \alpha } $
and defining $(u,\rho)$ using \eqref{scale0}.
Given the Euler velocity field $u$, we define the Lagrangian flow $\eta_u$ as the solution to 
$\p_t \eta_u = u \circ \eta_u$ for $t>t_0$ with $\eta_u(r,\theta,t_0)=(r,\theta)$.  
We shall consider annular regions
\begin{align*}
A _{\underline{r},\overline{r}}= \{ (r,\theta) \colon \underline{r} < r < \overline{r}, \theta \in \TT \} 
\end{align*}
for radii $0< \underline{r} < \overline{r} < \infty$. 
Given  $0< R_0 < r_0 < r_1 <R_0$, we consider a small annulus 
$A_{r_0,r_1}$ properly contained in a large annulus $A_{R_0,R_1}$.   
We define the time-dependent domain\begin{equation}\label{Omega}
\Omega(t)= \eta_u(A_{r_0,r_1},t) \subset A_{R_0,R_1} \quad \text{ for } \quad t \in [t_0,T_*]\,,
\end{equation} 
where the inclusion holds for $T_*$ sufficiently small whenever $u\in L^\infty_t L^\infty_x$.

We shall construct solutions to \eqref{eq:euler:wza} which form a shock in finite time and satisfy  properties \ref{item1}-\ref{item6} listed above.  Before describing our method of construction which is
 based
on a  transformation into self-similar variables,
 there  is a singularly interesting choice for
the adiabatic parameter $\gamma$ which allows for a particularly simple construction of shock formation.   When $\gamma=3$, and hence $ \alpha =1$, it will be shown that the
system \eqref{eq:euler:wza} can be reduced exactly to  $\p_t w + w\p_\theta w=0$ with $a=0$ and $z=0$, in which case we have a purely azimuthal wave solution 
$(\rho,u_r, u_\theta) = \tfrac{1}{2} ( r w, 0, rw)$ with a precise time and location for the shock formation, coming from the well-known solution to the Burgers equation.  As noted above, 
we view this purely azimuthal wave as the polar analogue of the simple plane wave, because the radial velocity component vanishes as does the first Riemann invariant.

\subsection{A  transformation to self-similar variables with modulation functions}
Turning to the case of general  adiabatic exponent $\gamma>1$ for the Euler system \eqref{eq:Euler}, we shall next introduce a self-similar transformation~\cite{GiKo1985} with dynamic modulation variables~\cite{Merle96}.
 Let
 $$x(\theta,t):=\frac{\theta-\xi(t)}{ (\tau(t)-t)^{\frac32}} \,, \quad  s:=-\log(\tau(t)-t) \,,$$
 and define the new variables $(A,Z,W)$ by
 $$
 w(\theta,t)=e^{-\frac s2}W(x,s)+\kappa (t)\,, \quad
z(\theta,t)=Z(x,s)\,, \quad
a(\theta,t)=A(x,s) \,.
 $$
This is a self-similar transformation\footnote{We note that our use of self-similar variables to construct the blowup is in some ways  analogous  to the use of geometric coordinates in the construction scheme of \cite{Ch2007,ChMi2014,LuSp2018} wherein
the long time existence in geometric coordinates leads to a finite-time  blowup by the singular transformation back to Cartesian coordinates.
We also note that self-similar variables have been used  in a very different way to study the problem of self-similar 2D shock reflection off a wedge \cite{ChFe2010,ChFe2018}.} with three dynamic {\it modulation variables}, $ \xi(t)$, $\tau(t)$, and $\kappa(t)$,  each satisfying  relatively simple ordinary differential equations.  This technique was developed in the context of the Schr\"odinger equation ~\cite{Merle96,MeRa05,MeRaRo13}
the nonlinear heat equation~\cite{MeZa97}, the generalized KdV equation~\cite{MaMeRa14},  the nonlinear wave equation~\cite{MeZa15} and other dispersive problems, and it has recently been applied to solve  problems in fluid dynamics~\cite{DaMa2018,CoGhMa2018,CoGhIbMa18,CoGhMa19,El19,ChHoHu19}.  In all these cases, the role of the modulation variables is to enforce certain orthogonality conditions required to study perturbations of the self-similar blowup. In our context, the  modulation variables  $ \xi(t)$, $\tau(t)$, and $\kappa(t)$, respectively, control  precisely the shock location, blowup time, and wave amplitude. In the absence of these dynamic variables, the above rescaling coincides with the well-known self-similar transformation for the Burgers equation (see \cite{CaSmWa96,PoBeGuGr2008,EgFo2009,CoGhMa2018}), but the use of the modulation variables allows us to impose constraints on $W$ and its first and second derivatives at $x=0$.

Upon switching to self-similar variables,  the $(a,z,w)$-system \eqref{eq:euler:wza} is transformed to self-similar evolution equations for $(A,Z,W)$ detailed below in \eqref{eq:ssWZA}.  
As we have noted above, for the special case that $\gamma=3$, this system of  self-similar equations reduces to the self-similar Burgers evolution, and a key feature of our proof is
that the construction of shocks which are perturbations of purely azimuthal waves exactly coincides with the self-similar perturbation of the Burgers equation.   Of paramount importance
to our analysis, then, is the explicit representation  of the stable, steady-state, self-similar Burgers profile~\cite{CaSmWa96}
\begin{align}
 \bar W(x) = \left(- \frac x2 + \left(\frac{1}{27} + \frac{x^2}{4}\right)^{\sfrac 12}\right)^{\sfrac 13} - \left( \frac x2 + \left(\frac{1}{27} + \frac{x^2}{4}\right)^{\sfrac 12} \right)^{\sfrac 13} \, ,
 \label{eq:barW:def}
 \end{align} 
 solving the steady self-similar Burgers equation
\begin{align}
 -\frac 12 \bar W + \left( \frac{3x}{2} + \bar W \right) \partial_x \bar W = 0   \,.
 \label{eq:barW:dx}
\end{align}

Our proof of finite-time blowup for $\p_\theta u_\theta$  and $\p_\theta \rho$ relies upon showing that  $\p_\theta w$ has finite-time blowup, which in turn relies upon the global existence
of  solutions to the $(A(x,s),Z(x,s),W(x,s))$-system  \eqref{eq:ssWZA} for $ x \in \mathbb{R}$ and $ s \in [-\log (-t_0), \infty )$.   Since
 \begin{equation}\label{vladss}
 \p_\theta w(\theta, t) = e^s \p_x W (x, s)\,, \qquad  e^s= \frac{1}{\tau(t)-t}  \,,
\end{equation} 
by letting the blowup time  modulation variable $\tau(t)$ satisfy $\tau(0)=0$ and $\tau(T_*) = T_*$ and the blowup location modulation variable $\xi(t)$ satisfy $\xi(0)=0$ and
$\xi(T_*)=\theta_*$, we see that as $ s \to \infty $,  $|\p_\theta w(\theta_*,t) | \to \infty $ at a rate proportional to $ {\sfrac{1}{(T_*-t)}} $.  Note,  that all points $\theta$ which are not
equal to $\theta_*$, when converted to the self-similar variable $x$, are sent to $\pm \infty $ as $s \to + \infty $.  In the proof, we show that 
$\abs{\p_x W} \lesssim (1+ x^2)^{-{\sfrac{1}{3}} }$ and hence from
this bound,  it follows that  $\abs{W_x(e^{3s/2} (\theta-\xi),s)} \lesssim e^{-s} (\theta-\theta_*)^{- {\sfrac{2}{3}} } $, and from \eqref{vladss}, 
$\p_\theta w(\theta, t) $ does not blowup as
$t \to T_*$.

The  $(A,Z,W)$-system \eqref{eq:ssWZA} consists of transport type equations, which allow us to use $L^ \infty $-type estimates to construct global-in-time solutions in $C^4$.
We view the $W$ equation \eqref{e:W_eq} as producing the dominant dynamics, and the key to our analysis is a careful comparison of $W(x,s)$ with $\bar W(x)$.   In particular,
differentiation of the system  \eqref{eq:ssWZA} shows that the equations satisfied by $ \p_x^n W$, $\p_x^n Z$, and $\p_x^n A$ for $n=0,1,2,3,4$,  have either damping or anti-damping terms that depend on the solutions and their derivatives.   It is only when $n=4$ that a clear damping term emerges, while for $n=1$ and $n=2$, a very subtle analysis must be made
for the evolution equations of both  $\p_xW - \p_x \bar W$ and $\p_x^2 W- \p_x^2 \bar W$; a very delicate analysis allows us to find lower-bounds for the damping terms in
these equations by specially constructed rational functions
that are  found with the help of Taylor expansions of $\p_x \bar W$ near $x=0$ and $x= \infty $ (see, in particular,  \eqref{eq:V:main:damping} and \eqref{eq:Shaq}).
A bootstrap procedure is employed wherein we assume bounds for $(A,Z,W,\tau,\xi,\kappa)$ as well as their derivatives, and then proceed to close the bootstrap argument with even better bounds.

\subsection{Paper outline}  In Section \ref{sec:pure_azimuthal}, we consider the case that $\gamma=3$, and we have the simple
example of purely azimuthal   shock formation.   In this special case, the dynamics are reduced entirely to those of the Burgers equation.   The formation of shocks for
the 2D Euler equations with general adiabatic exponent $\gamma>1$ is then treated in Section \ref{sec:general_gamma}; a detailed description of the data is given,
the main theorem is stated, and the proof of  is given.  Concluding remarks are stated in Section \ref{sec:conclusion}.   We include Appendix \ref{sec:toolshed} which 
contains some important maximum-principle-type lemmas for solutions of non-locally forced and damped transport equations.

\section{Purely azimuthal waves and shocks: a simple example} \label{sec:pure_azimuthal}
In the case that $ \gamma=3$, some remarkable cancellations occur in the homogeneous solutions of the Euler equations which allow for
an exceedingly simple mechanism of shock formation, in which a smooth purely azimuthal wave travels around the circle, steepens and forms a shock wave which
can be continued for all time.   Our general construction of shock waves for all $\gamma>1$ will be a perturbation of this purely azimuthal shock wave solution, but we
shall first describe this simple solution.

For the most concise presentation, we shall consider the Euler equations posed on a two-dimensional annular domain
$A _{r_0,r_1} $
where $0< r_0 < r_1 < \infty$ with the standard no-flux boundary conditions
 $u_r|_{r=r_0} = u_r|_{r=r_1} = 0$.

In view of \eqref{eq:ansatz}, the no-flux boundary condition requires that  $a\equiv 0$ for all time.  Therefore, from equation \eqref{g3_a_evo}, we must have the relation 
\begin{equation}\label{g3_relation}
b^2 = \frac{2}{\gamma-1} P^{\gamma -1}
\end{equation} 
for all time. If we impose  condition \eqref{g3_relation} at $t=0$, an explicit computation verifies that the evolution equations 
 \eqref{eq:Euler:polar3} preserve the constraint \eqref{g3_relation} if and only if $\gamma = 3$, in which case, we have  that
$b= P$, 
and hence from \eqref{eq:riemann}, the Riemann invariants are given by
$$
w= 2b \ \ \text{ and } \ \ z=0 \,.
$$
Thus, with $a=0$ and $z=0$, the system \eqref{eq:euler:wza}
reduces to a single equation for the unknown $w$, which we identify as the 1D Burgers equation,
\begin{align}
 \partial_t w +  w \p_\theta w = 0 \, \qquad w(\theta, 0) =w_0(\theta) \,, \qquad \theta \in \T= [-\pi,\pi] \,,
\label{eq:w:Burgers}
\end{align}
solved on $\T$ with periodic boundary conditions.
It is well known that any initial datum $w_0$ which has a negative slope at a point forms a shock
(or infinite slope)
 in finite time. 
Note that for $\gamma=3$, the formula \eqref{omega_abp} shows that the vorticity $\vorticity = 2b=w$ and hence, $ \vorticity$ is nontrivial even for the purely azimuthal wave.
We shall sometimes use $w'$ to denote $\p_\theta w$.

\begin{theorem}[\bf Construction of the purely azimuthal shock]\label{thm:g3}
For $\gamma=3$, let $0<r_0< r_1$ be arbitrary, and consider initial datum $u_r = 0$, $u_\theta = \rho_0 = \tfrac 12 r w_0$, in $A_{r_0,r_1}$, where  $w_0 \in C^ \infty (\T)$ is such that $w_0 \ge \nu_0  >0$. Suppose that 
\begin{align} 
 \norm{w_0}_{L^\infty}  \leq 1  \,,  \label{t1_b1}
\end{align} 
and that there is a single point
$\theta_0 \in \T$ such that $w_0'(\theta_0) = \min_{\theta \in \T} w_0(\theta)$, and that 
\begin{align} 
\p_\theta w_0(\theta_0) = - \frac{1}{\eps }   \label{t1_b2}
\end{align} 
for some $\eps>0$. Then the solution $w$ of \eqref{eq:w:Burgers}, develops a singularity at time $T_* = \eps$ and angle $\theta_* = \theta_0 + \eps  w_0(\theta_0)$.
Moreover, the functions $u_r=0$, $u_\theta= \tfrac 12 r w(\theta,t)$, and $\rho= \tfrac 12 r w(\theta,t)$ form the unique smooth solution to the initial value problem for the Euler system \eqref{eq:Euler:polar} in  the domain $A_{r_0,r_1}$, on the time interval $[0,\eps)$. This solution satisfies the bounds
\begin{align} 
&  \sup_{t\in[0,T_*)} \left( \| \rho(\cdot , t)\|_{ L^ \infty (A_{r_0,r_1})} +  \| u(\cdot , t)\|_{ L^ \infty (A_{r_0,r_1})}\right) \leq 2 r_1  \,, \label{thm1_bnd1}\\
&  \sup_{t\in[0,T_*)} \left( \|\p_r \rho(\cdot , t)\|_{ L^ \infty (A_{r_0,r_1})} +  \|\p_r u(\cdot , t)\|_{ L^ \infty (A_{r_0,r_1})}\right) \leq 2 \,, \label{thm1_bnd1b}\\
& \lim_{t \to  T_*} \partial_\theta \rho(\theta_*, t) = \lim_{t \to  T_*} \partial_\theta u_\theta(\theta_*, t)  = - \infty  \, . \label{thm1_bnd2}
\end{align} 
The  vorticity and density satisfy
\begin{align} 
\nu_0 \leq  \vorticity(\theta , t) \leq  1, \qquad   \rho(r,\theta,t) \ge r_0 \tfrac{ \nu_0}{2}  \, ,\label{g3_vort_bounds}
\end{align} 
for all $\theta \in \TT$ and $t\in [0,\eps)$.
\end{theorem}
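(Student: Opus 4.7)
The plan is to exploit the $\gamma = 3$ reduction already set up in the excerpt to collapse the full Euler system to a scalar Burgers equation, and then read off the blowup time/location, the pointwise bounds, and uniqueness by the classical theory of Burgers. More precisely, I would first check that the ansatz $a \equiv 0$, $z \equiv 0$ is dynamically consistent at $\alpha = 1$. The initial data translate to $a_0 \equiv 0$, $b_0 = P_0 = \tfrac{1}{2} w_0$, hence $z_0 \equiv 0$. Inspecting \eqref{eq:euler:wza} at $\alpha = 1$: the forcing in the $z$-equation carries an overall factor of $a$, so $z \equiv 0$ is preserved whenever $a \equiv 0$; and the right-hand side of the $a$-equation collapses to $-a^2 + wz$ at $\alpha = 1$, which vanishes identically when $a \equiv 0$ and $z \equiv 0$. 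Thus $(a,z,w)(t) = (0, 0, w(t))$ is propagated, and $w$ satisfies the periodic Burgers equation \eqref{eq:w:Burgers}. The no-flux boundary condition $u_r\vert_{r = r_0, r_1} = 0$ is automatic since $u_r = r a = 0$.

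Next I would apply the method of characteristics to \eqref{eq:w:Burgers}. Setting $\phi_t(\theta_0) := \theta_0 + t\, w_0(\theta_0)$, one has $w(\phi_t(\theta_0), t) = w_0(\theta_0)$ on any interval where $\phi_t : \T \to \T$ is a $C^1$-diffeomorphism. Since $\phi_t'(\theta_0) = 1 + t\, w_0'(\theta_0)$ and the hypothesis singles out $\theta_0$ as the unique minimizer of $w_0'$ with $w_0'(\theta_0) = -1/\eps$, this diffeomorphism property holds precisely on $[0, \eps)$. Differentiating along characteristics gives
\[
\p_\theta w(\phi_t(\theta_0), t) = \frac{w_0'(\theta_0)}{1 + t\, w_0'(\theta_0)},
\]
which remains bounded on $\T$ for every $t < \eps$ and diverges to $-\infty$ only at $\theta_0$ as $t \to \eps$. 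The first singularity therefore occurs at time $T_* = \eps$ and location $\theta_* = \phi_\eps(\theta_0) = \theta_0 + \eps\, w_0(\theta_0)$, and $\p_\theta u_\theta = \tfrac{r}{2} \p_\theta w$ and $\p_\theta \rho = \tfrac{r}{2} \p_\theta w$ yield \eqref{thm1_bnd2}.

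For the pointwise bounds, transport along characteristics preserves the range of $w$ on $[0, T_*)$, so $\nu_0 \leq w(\theta, t) \leq 1$. Substituting into $u_\theta = \tfrac{r}{2} w$, $\rho = \tfrac{r}{2} w$, and $\p_r u_\theta = \p_r \rho = \tfrac{1}{2} w$ gives \eqref{thm1_bnd1} and \eqref{thm1_bnd1b}, while $\rho \geq r_0 \nu_0 / 2$ excludes vacuum. From \eqref{omega_abp}, $\vorticity = 2b - \p_\theta a = w$, yielding the vorticity bound in \eqref{g3_vort_bounds}. Uniqueness of the smooth Euler solution on $[0, \eps)$ follows from Lemma \ref{lem:awz_WP} applied to $(w, 0, 0)$, with the continuation criterion \eqref{eq:continuation:criterion} satisfied on every compact subinterval of $[0, \eps)$ since $\p_\theta w$ is bounded there.

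There is no serious analytical difficulty; the only delicate point is the algebraic cancellation that makes $a \equiv 0$, $z \equiv 0$ invariant under the flow. For any $\alpha \neq 1$, the right-hand side of the $a$-equation with $a = z = 0$ would equal $\tfrac{1 - \alpha}{2(1+\alpha)} w^2 \not\equiv 0$, which would instantly generate a nontrivial $a$ and thereby $z$, reintroducing the full coupled system with a derivative of $P$ in the $b$-equation. This is exactly why the simple purely azimuthal reduction is confined to $\gamma = 3$, and why the general $\gamma > 1$ case in the remainder of the paper must instead proceed via the modulated self-similar framework.
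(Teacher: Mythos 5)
Your proposal is correct and follows essentially the same route as the paper's proof: reduce to the scalar Burgers equation via the $\gamma=3$ cancellation, solve along characteristics, and read off the blowup time/location, the amplitude and slope bounds, and the vorticity/density estimates from $\vorticity = w$ and $\rho = \tfrac12 r w$. The only differences are that you spell out in more detail why $(a,z)\equiv(0,0)$ is invariant under \eqref{eq:euler:wza} at $\alpha=1$ (the paper establishes this before the theorem, in the derivation of \eqref{eq:w:Burgers}), and your characteristic formula $\p_\theta w \circ \psi = \tfrac{\p_\theta w_0}{1+t\,\p_\theta w_0}$ is the correct one — the paper contains a small typo there, writing $\tfrac{\p_\theta w_0}{t+\p_\theta w_0}$.
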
 
\begin{proof}[Proof of Theorem~\ref{thm:g3}] 
For smooth initial datum $w_0$,  we solve \eqref{eq:w:Burgers}.    Differentiating \eqref{eq:w:Burgers} gives the
equation $\partial_t (\p_\theta \bb) +  \bb \p_\theta^2 \bb+ (\p_\theta \bb)^2 =0$.   Define the flow $\psi(\theta, t)$ by
$\p_t \psi(\theta,t) =  w(\psi(\theta,t),t)$ and $\psi(\theta,0)=\theta$.
Then  $\p_t (\p_\theta w\circ \psi ) + (\p_\theta w\circ \psi )^2 =  0$ so that $(\p_\theta w)\circ \psi = \tfrac{\p_\theta w_0}{t+ \p_\theta w_0}$ and $\psi(\theta,t) = \theta + t w_0(\theta)$. Hence from \eqref{t1_b2},  $\p_\theta w$ forms a shock at   time
 $T_*= \epsilon $ at the point $\theta_* = \theta_0 +t w_0(\theta_0)$, implying \eqref{thm1_bnd2}.
By the maximum principle and \eqref{t1_b1} we have
\begin{equation*}
\sup_{t\in [0, T_* )} \norm{w(\cdot , t)}_{L^\infty}  \leq  1 \qquad   \text{ and }  \qquad 
\min_{\theta \in \T} w(\theta,t) \ge  \nu_0 \,.
\end{equation*} 
The bounds \eqref{thm1_bnd1}--\eqref{thm1_bnd1b} and \eqref{g3_vort_bounds} follow directly from the definitions of $u_\theta, \rho, \vorticity$ and the above estimate. 
\end{proof}

\begin{remark}[\bf The Burgers solution continued after the singularity]
\label{rem:jump}
In Theorem~\ref{thm:g3} we have considered datum with a global (negative) minimum attained at a single point $\theta_0$, and thus $w_0''(\theta_0) = 0$ and $w_0'''(\theta_0)>0$. It is shown in~\cite[Proposition 9]{CoGhMa2018} that in the Burgers equation the finite time blowup arising from such initial datum is asymptotically self-similar and that the blowup profile is precisely the stable global-self similar profile $\bar W$ defined in \eqref{eq:barW:def}.  Moreover, at the blowup time $T_*=\eps$ the solution is H\"older $C^{\sfrac 13}$ smooth near the singular point.  To simplify the discussion, upon taking into account a Galilean transformation and a rescaling of the initial datum, we have that the blowup occurs at $\theta =0$ with speed $w(0,T_*) = 0$, and that $w(\theta,T_*) \sim \theta^{\sfrac 13}$ to leading order in  $\abs{\theta} \ll 1$. The solution of the Burgers equation may be continued in a unique way as an entropy  solution also after the blowup time $T_*$, starting from this H\"older $\sfrac 13$ initial datum, and we still denote this solution as $w(\cdot,t)$. We claim that instantaneously, for any $t>T_*$, the entropy  solution $w(\cdot,t)$ has a jump discontinuity, with the discontinuity propagating at the correct shock speed, given by the Rankine-Hugoniot condition. This phenomenon is explained in~\cite[Chapter 11]{EgFo2009}: for $t>T_*$ one may compute an explicit forward globally self-similar solution, and one notices that this  self-similar solution is not single-valued; we thus must have a jump in the solution $w$ at a location and a speed determined by the Rankine-Hugoniot condition. The argument in~\cite{EgFo2009} can be easily made precise by taking advantage of the Lax-Oleinik formula. For simplicity, let us consider initial datum $w(\theta,T_*) = \theta^{\sfrac 13}$, which allows us to perform explicit calculations. For $t>T_*$ the Lax-Oleinik formula tells us that the entropy  solution equals
\begin{align}
w(\theta,t) = \frac{\theta - (t-T_*)^{\sfrac 32} Y^3\left(\frac{\theta}{(t-T_*)^{\sfrac 32}}\right)}{t-T_*}
\label{eq:Burgers:ss}
\end{align}
where the function $Y = Y(q)$ is defined implicitly as the {\em the correct root} of the equation $Y^3 - Y = q$. This root is unique for $\abs{q} > \sfrac{2}{(3\sqrt{3})}$ and so the meaning of $Y(q)$ is clear; for $q \in [-\sfrac{2}{(3\sqrt{3})},0]$ we need to define $Y(q)$ as {\em the smallest root}, which is negative and has the limiting behavior $Y(0) = -1$;  while for $q \in (0,\sfrac{2}{(3\sqrt{3})}]$ the entropy solution requires us to take the {\em largest root}, which is positive and has the limiting behavior $Y(0^+)= +1$. Since the formula \eqref{eq:Burgers:ss} is explicit, it is easy to verify the above claims. We have $w(0^-,t) = w(0,t) = (t-T_*)^{\sfrac 12}$ and $w(0^+,t) = - (t-T_*)^{\sfrac 12}$. This shows that we have a discontinuity across the shock location $\theta = 0$, the left speed is larger than the right speed at the shock, and their average is $0$, which is why the shock location does not move with time. 
\end{remark}

\begin{remark}[\bf The Euler solution continued after the shock]\label{rem:Euler_continue}
For all $t \ge T_*$, let $\theta_*(t)$ denote the position of the discontinuity of $w( \cdot , t)$.   Now for all $\theta \neq \theta_*(t)$, $w( \cdot , t)$ is smooth and hence defines
a smooth solution to the Euler equations via the relations $\rho=u_\theta= \frac 12 r w$ and $u_r=0$.    By the Lax-Olienik formula, the shock moves with speed 
$\frac{d}{dt} \theta_*(t)= {\frac{1}{2}} (w^- + w^+)$, where 
$w^- =\lim_{ \theta \to \theta_*(t)^-} w(\theta,t)$ and $w^+=\lim_{ \theta \to \theta_*(t)^+} w(\theta,t)$. For $t>T_*$,  we denote by $\Gamma(t)$  the line segment given by
$\{(r,\theta) \colon \theta = \theta_*(t), r_1 \leq r \leq r_2 \}  $.  Then for a piecewise smooth function $f( \cdot , t) :A_{r_0,r_1} \to \mathbb{R}  $, which is discontinuous across $\Gamma(t)$, we let
$\llbracket f \rrbracket = f^{{-}}( \cdot , t) - f^{{+}}( \cdot ,t)$.   From the discontinuity of $w ( \cdot , t)$ we have that
$\llbracket \rho (\cdot,t) \rrbracket   > 0, \llbracket u_\theta \rrbracket  > 0, \llbracket u_r  \rrbracket  =0$.
Moreover, the Rankine-Hugoniot conditions require that 
$ \frac{d}{dt}\theta_*(t) = \frac{\llbracket \rho u_\theta \rrbracket}{\llbracket \rho \rrbracket}$.     But $\frac{\llbracket \rho u_\theta \rrbracket}{\llbracket \rho \rrbracket} = 
 {\frac{1}{2}} (w^- + w^+)$ and so the Rankine-Hugoniot condition is satisfied.  This shows that $(u_r,u_\theta,\rho)$ is a global entropy solution to the compressible Euler system with $\gamma=3$, which forms a shock at $T_*= \epsilon $, becomes discontinuous across the line segment $\Gamma(t)$ for times $t> \epsilon $, and propagates the shock with the correct shock speed.
\end{remark}
 
 \section{Formation of shocks for the Euler equations}\label{sec:general_gamma}
In this section, we construct a finite-time shock solution to the Euler equations for the general adiabatic constant $\gamma >1$. We achieve this by studying
the system of equations \eqref{eq:euler:wza} on the time interval $ - \eps \le t < T_*= \OO(\eps^{\sfrac 54})$, where $T_*$ is constructed in the proof  and $\eps \in (0,1)$ is a small parameter to be chosen later. We prove that a gradient blowup occurs at time $T_*$ for the variable $w$, whereas $\p_\theta z$ and $\p_\theta a$ remain bounded.

\subsection{Assumptions on the initial datum} 
In this subsection we describe the initial data that is used to construct the shock wave solutions to \eqref{eq:euler:wza}.
The initial time is given by $- \epsilon $, and the initial data is denoted as 
\[
w(\theta,-\eps) = w_0(\theta), \qquad z(\theta,-\eps) = z_0(\theta),\qquad a(\theta,-\eps) = a_0(\theta).
\] 
We  assume that $\partial_\theta w_0$ attains its global minimum at $\theta=0$, and moreover that 
\begin{align}
w_0(0) = \kappa_0 ,\qquad \partial_\theta w_0(0) = - \eps^{-1}, \qquad \partial_\theta^2 w_0(0) =0, \qquad  \partial_\theta^3 w_0(0) =  6 \eps^{-4} 
\,,
\label{eq:w0:power:series}
\end{align}
for some $\kappa_0 > 0$ to be determined later and whose main purpose is to ensure that the initial density is bounded from below by a positive constant (cf.~\eqref{eq:kappa:0}), and for an $0< \eps \ll 1$ to be determined.   We also assume that $w_0$ has its first four derivatives bounded as 
\begin{align}\label{eq:w0:bnd}
 \norm{\partial_\theta w_0}_{L^\infty}\leq \eps^{-1}, \quad \norm{\partial_\theta^2 w_0}_{L^\infty}\leq \eps^{-\sfrac 52}, \quad \norm{\partial_\theta^3 w_0}_{L^\infty}\leq 7 \eps^{-4},\quad \norm{\partial_\theta^4 w_0}_{L^\infty}\leq \eps^{-\sfrac{11}{2}}
\, ,
\end{align}
which are bounds consistent with \eqref{eq:w0:power:series}. 
In order to simplify the proof and to obtain a precise description of the solution's profile at the singular time (cf.~\eqref{eq:bootstrap:2*} and~\eqref{eq:cal:V:bootstrap} below), it is convenient to assume a slightly more precise behavior of $\partial_\theta w_0 $ near $\theta=0$. For this purpose we assume 
\begin{align}
 \abs{\eps (\partial_\theta w_0)(\theta)  -(\bar W_x)\left(\frac{\theta}{\eps^{\sfrac 32}}\right)} 
&\leq \min \left\{ \frac{(\frac{\theta}{\eps^{\sfrac 32}})^2}{40 (1 + (\frac{\theta}{\eps^{\sfrac 32}})^2 )} , \frac{1}{2 (8 + (\frac{\theta}{\eps^{\sfrac 32}})^{\sfrac 23})} \right\}
\label{eq:big:assume:ab}
\end{align}
for all $\theta \in \TT$, where  $\bar W$ is the stable globally self-similar solution to the Burgers equation defined in~\eqref{eq:barW:def}.

For $z$ and $a$ we assume that at the initial time we have
\begin{align}
\norm{z_0}_{C^n} + \norm{a_0}_{C^n}  \leq 1
\label{eq:z0:a0:Cn}
\end{align}
for $0 \leq n \leq 4$.
Furthermore, we assume that $w_0$, $z_0$, and $a_0$ all have compact support such that
\begin{align} 
\supp(w_0 (\theta) -\kappa_0) \cup \supp(z_0(\theta) ) \cup \supp(a_0(\theta) ) \subseteq  (-\sfrac{\pi}{2},\sfrac{\pi}{2})\,,  \label{eq:initialdata_support}
\end{align} 
and  in order to ensure the positivity of the initial density we assume that 
\begin{align}
\norm{w_0 (\cdot) - \kappa_0}_{L^\infty} \leq   \frac{\kappa_0}{2} \, ,
\label{eq:w0:C0}
\end{align}
and choose $\kappa_0$ suitably. Indeed, in order to ensure that $P_0(\theta) \geq \nu_0 >0$ for all $\theta \in \TT$, we simply choose any 
\begin{align}
\kappa_0 \geq 4 ( 2+ (\sfrac 2\alpha) (\sfrac{\nu_0}{2})^\alpha). 
\label{eq:kappa:0}
\end{align}
With this choice of $\kappa_0$, from \eqref{eq:riemann}, \eqref{eq:z0:a0:Cn}, and \eqref{eq:w0:C0} we have that $(\sfrac{2}{\alpha}) P_0^\alpha(\theta) = w_0(\theta) - z_0(\theta) \geq \sfrac{\kappa_0}{2} - 1 \geq (\sfrac{2}{\alpha}) \nu_0^\alpha $, thereby ensuring the desired strictly positive lower bound on the initial density.

\begin{remark}[\bf Consistency of the $w_0$ assumptions]
condition \eqref{eq:big:assume:ab}, which may be rewritten in terms of $x =  \theta \eps^{-\sfrac 32}$  as $\abs{\eps (\p_\theta w_0)(x \eps^{\sfrac 32}) 
- (\bar W_x)(x)}\leq \min\{ \frac{x^2}{40(1+x^2)}, \frac{1}{2(8 + x^{\sfrac 23})}\}$ for all $\abs{x} \leq \pi \eps^{-\sfrac 32}$, is consistent with 
\eqref{eq:w0:power:series}--\eqref{eq:w0:bnd} and with \eqref{eq:initialdata_support}--\eqref{eq:w0:C0}, meaning that we can find an open set of  initial conditions satisfying all of 
these assumptions. The first bound in the minimum of \eqref{eq:big:assume:ab} is required in order to ensure that near $\theta=0$ the deviation from the self-similar profile is 
parabolic; this is needed in view of \eqref{eq:w0:power:series} and the Taylor series   of $\bar W_x$ near the origin \eqref{eq:bar:Wx:zero}. The second condition 
in the minimum of \eqref{eq:big:assume:ab} is not required in order to prove a finite-time singularity theorem; rather, this assumption is needed  to characterize the blowup 
profile of $w(\theta,t)$ as $t\to T_*$ as being H\"older $C^{\sfrac 13}$ regular.  Lastly, we note that \eqref{eq:big:assume:ab} is consistent with $\p_\theta w_0$ being the derivative of a periodic function, which implies that it must have zero average and so $\p_\theta w_0$ cannot have a definite sign.  Since $\bar W_x(x) < 0$ for all $x\in \RR$, it is important that for $\abs{x}\gg 1$,
the envelope determined by the second term on the right side of \eqref{eq:big:assume:ab} allows $\p_\theta w_0$ to become positive.  Indeed, note that in the Taylor series   of $\bar W_x$ around infinity~\eqref{eq:bar:Wx:infinity},
the coefficient of $x^{-\sfrac 23}$ is $-\sfrac 13$, while the coefficient of $x^{-\sfrac 23}$ in the Taylor series about infinity of the right side of \eqref{eq:big:assume:ab} is 
$\sfrac 12 > \sfrac 13$, which allows $\p_\theta w_0$ to take on positive values.
\end{remark}

\begin{remark}[\bf $L^\infty$ estimates for the solution]
\label{rem:Linfinity}
Using assumptions \eqref{eq:z0:a0:Cn}, \eqref{eq:w0:C0}, and the fact that \eqref{eq:euler:wza} is a system of forced transport equations in which the forcing terms show no derivative loss, we deduce via the maximum principle that 
\begin{align}
\norm{w(t)}_{L^{\infty}} + \norm{z(t)}_{L^{\infty}} + \norm{a(t)}_{L^{\infty}}\leq   M
\label{eq:L:infinity}
\end{align}
holds for any $M\geq 4 + 2 \kappa_0$, and all times $t$ which are sufficiently small with respect to $\kappa_0$. This argument is detailed upon in Proposition~\ref{prop:ballistic} below, cf.~estimate~\eqref{eq:improved:L:infty}. In particular, these amplitude bounds hold for all $t\in [0,T_*)$ since $T_* = \OO(\eps^{\sfrac 54})$, and we take $\eps$ to be sufficiently small, in terms of $\kappa_0$.
\end{remark}

\begin{remark}[\bf The spatial support of the solution and an extension from $\TT$ to $\RR$]
\label{rem:support}
Using  \eqref{eq:L:infinity} we obtain that the transport speeds on the left side of \eqref{eq:euler:wza} are bounded solely in terms of $M$. Therefore, assuming $\eps$ to be sufficiently small depending on $M$ and using that the length of $[-\eps,T_*)$ is less than $2 \eps$, by finite speed of propagation the solution $(w,z,a)$ of \eqref{eq:euler:wza} restricted to the region $\TT \setminus [-\frac{3\pi}{4},\frac{3\pi}{4}] $ is uniquely determined by the initial data $(w_0,z_0,a_0)$ on the set $\TT \setminus [-\frac{\pi}{2},\frac{\pi}{2}] $, for all times $t \in [-\eps,T_*]$. In particular, as a consequence of the support assumption \eqref{eq:initialdata_support}, on the region $\TT \setminus [-\frac{3\pi}{4},\frac{3\pi}{4}]$,  the solution $(w,z,a)$ is constant in the angle $\theta$ (albeit a time dependent constant), for all times $t \in [-\eps,T_*]$.  Hence by abuse of notation we may extend the domain of $(w,z,a)$ to $\theta\in \RR$, by setting $w(\theta,t)=w(\pi,t)$, $z(\theta,t)=z(\pi,t)$, and $a(\theta,t)=a(\pi,t)$ for $\abs{\theta}>\pi$.  In what follows we adopt this abuse of notation, with the knowledge that the true solution is defined to be the periodization of the restriction to $[-\pi,\pi)$ of the extended solution. 
Also, we shall use implicitly throughout the proof that 
$
\supp(\partial_\theta w) \cup \supp(\partial_\theta  z) \cup \supp(\partial_\theta a) \subseteq  [-\sfrac{3\pi}{4},\sfrac{3\pi}{4}] \, .
$
\end{remark}

\subsection{Statement of the main result} \label{sec:main:results}

\begin{theorem}[\bf Formation of shocks for Euler] \label{thm:general}
Let $\gamma>1$, $ \alpha = {\tfrac{\gamma-1}{2}}$,  $0<R_0<r_0  < r_1< R_1 < \infty$, and $\nu_0>0$.   Then, there exist a sufficiently large $\kappa_0 = \kappa_0(\alpha,\nu_0) > 0$, a sufficiently large $M = M(\alpha,\kappa_0,\nu_0) \geq 1$, and a sufficiently small $\eps = \eps(\alpha,\kappa_0,\nu_0,M,R_0,R_1,r_0,r_1) \in (0,1)$ such that the following holds. 

\underline{Assumptions on the initial data.}\,  Consider initial datum  for the Euler equations  \eqref{eq:Euler:polar}, given at initial time $t_0=-\eps$ given as  follows:
$$u_{r}(r,\theta,t_0) = r a_0(\theta)\,,   u_{\theta}(r,\theta,t_0) = r b_0(\theta)\,,  \text{ and } \rho_0(r,\theta,t_0) =r^{\sfrac{1}{\alpha}} P_0(\theta) \ \text{ for } \ (r,\theta) \in A_{R_0,R_1} \,, $$
where $(a_0,b_0,P_0) \in C^ 4 (\TT)$ and  $P_0\ge \nu_0 >0$.    Define $w_0 = b_0 + \tfrac{1}{\alpha } P_0^\alpha $, $z_0=b_0 - \tfrac{1}{\alpha} P_0^\alpha$, and suppose that
 $(w_0,z_0,a_0)$ satisfy assumptions \eqref{eq:w0:power:series}--\eqref{eq:w0:C0}.

\underline{Shock formation for $(a,z,w)$-system \eqref{eq:euler:wza}.}\,  
There exists a unique solution $(a,z,w) \in C([-\eps,T_*); C^4(\T))$ to  \eqref{eq:euler:wza} which blows up in  asymptotically self-similar fashion  at time $T_*$ and angle $\theta_*$, such that:
\begin{itemize} 
\item the blowup time $T_* = \OO(\eps^{\sfrac 54})$ and angle  $\theta_*  = \OO( \epsilon )$ are explicitly computable, with $\theta_*  = \lim_{t\to T_*} \xi(t)$,
\item $\sup_{t\in[-\eps, T_*)} \left( \| a\|_{ W^{1, \infty }(\T)} + \| z\|_{ W^{1, \infty} (\T)} + \| w\|_{ L^\infty(\T)} \right) \leq  C(M),$
\item $\lim_{t \to T_*}   \p_\theta w(\xi(t),t) = -\infty $ and we have $\frac{1}{2(T_*-t)} \leq  \norm{\p_\theta w(\cdot,t)}_{L^\infty} \leq \frac{2}{T_*-t}$ as  $t \to T_*$,
\item $w( \cdot , T_*)$ has a cusp singularity of H\"{o}lder $C^ {\sfrac{1}{3}} $ regularity.
\end{itemize}

\underline{Shock formation for the Euler equations \eqref{eq:Euler:polar}.}\,  
Setting $ b= \tfrac{w+z}{2}$ and $P = (\frac{\alpha}{2}(w-z))^{\sfrac{1}{\alpha}}$, we define $(u_r,u_\theta,\rho)$ by \eqref{scale0}.
Consider the time-dependent domain $\Omega(t)$ defined in \eqref{Omega} such that $\Omega(t) \subset   A_{R_0,R_1}$ for all  $t\in [- \eps, T_*]$.
 Then, $(u_r,u_\theta,\rho) \in C\left([-\eps,T]; C^4(\Omega(t)) \right)$  is a unique solution to the Euler equations \eqref{eq:Euler} on the domain 
 $\Omega(t)$ for all $-\eps\le t \leq T$, for any $T< T_*$,  and
\begin{align} 
&\quad \lim_{t\to T_*} \p_\theta u_\theta ( r, \xi(t), t) = \lim_{t\to T_*} \p_\theta \rho ( r, \xi(t), t) = -\infty  \qquad \text{ for all } r \in \Omega(t) \,, \label{eq:thm:blowup} \\
&  \sup_{t\in[-\eps,T_*)}\sum_{k=0}^1 \left( \| \p_r^k\rho(\cdot , t)\|_{ L^ \infty (\Omega(t))} +  \| \p_r^ku(\cdot , t)\|_{ L^ \infty (\Omega(t))}\right) + 
\| \p_\theta u_r(\cdot , t)\|_{ L^ \infty (\Omega(t))}  \le  C(R_1,M) \label{eq:thm:bnd:1} \, .
\end{align} 
The shock occurs along the line segment
  $\Gamma(T_*):=\{(r,\theta) \in  \Omega(T_*) \colon \theta=\theta_*\}$. 
The graphs of the blowup profiles $u_\theta(r,\theta,T_*)$ and $\rho(r,\theta,T_*)$ are surfaces with cusps
along $\Gamma(T_*)$ and are H\"{o}lder $C^{ {\sfrac{1}{3}} }$ smooth.

\underline{Non-trivial vorticity and density at the shock.}\,  
The  vorticity   and density satisfy
$$  \frac{1}{M^2} \leq  \upomega(\theta , t) \le  M^2 \,, \qquad  \rho(r,\theta,t) \ge \frac{R_0^{\sfrac{1}{\alpha}} \nu_0}{2}>0 \,,$$
for all $(r,\theta) \in \Omega(t)$ and $t \in [-\eps, T_* )$.
\end{theorem}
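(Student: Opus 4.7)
The proof proceeds by passing to the modulated self-similar variables of Section 2.2, proving global-in-time (in $s$) bounds for the rescaled unknowns $(A,Z,W)$ and the modulation functions $(\xi,\tau,\kappa)$ via a continuity/bootstrap argument, and then translating back to physical variables to obtain the shock formation, blowup rate, H\"older regularity, and positivity of $\rho$ and $\upomega$. The modulation functions are fixed by three normalization conditions at $x=0$, namely $W(0,s)=0$, $\partial_x W(0,s)=-1$, and $\partial_x^2 W(0,s)=0$. Evaluating the $W$-equation in \eqref{eq:ssWZA} and its first two $x$-derivatives at $x=0$ produces an invertible linear system for $(\dot\xi,\dot\tau,\dot\kappa)$ driven by $(A,Z,\partial_x Z,\partial_x^3 W,\dots)|_{x=0}$, together with the initial conditions $\xi(-\eps)=\tau(-\eps)=0$, $\kappa(-\eps)=\kappa_0$, which are consistent with \eqref{eq:w0:power:series}. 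Once these ODEs are coupled to the PDE system, proving the theorem reduces to showing that the self-similar solution exists for all $s\in[-\log\eps,\infty)$.

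The bootstrap hypotheses on $[-\log\eps,s_1]$ would include: (i) a pointwise comparison $|W(x,s)-\bar W(x)|$ controlled by an envelope that is quadratic near $x=0$ and of order $(1+|x|)^{1/3}$ at infinity, in the spirit of \eqref{eq:big:assume:ab}; (ii) decay bounds $|\partial_x W|\lesssim(1+x^2)^{-1/3}$ together with matching weighted bounds on $\partial_x^k W$ for $k=2,3,4$; (iii) smallness $|Z|,|A|,|\partial_x Z|,|\partial_x A|\lesssim e^{-s/2}$ expressing that the $z$ and $a$ dynamics are subdominant; and (iv) $|\dot\tau|,|\dot\kappa|\lesssim e^{-s/2}$ with $\dot\xi$ integrable in $s$, which yields $|\tau(t)|,|\kappa(t)-\kappa_0|,|\xi(t)|\lesssim \eps^{5/4}$, hence $T_*=\OO(\eps^{5/4})$ and $\theta_*=\OO(\eps)$. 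Each differentiated version of the system is then written in the form $(\partial_s+V\partial_x+D)(\cdot)=F$ and estimated by applying the weighted $L^\infty$ transport-damping lemmas of Appendix~\ref{sec:toolshed} along the characteristics of $W,Z,A$. For $Z$ and $A$ the damping is manifestly positive at the self-similar level, and the forcing is $\OO(e^{-s/2})$, so these bounds close directly. For the fourth-order derivative of $W$, the scaling produces a clean damping coefficient (essentially $\tfrac72$) that dominates all lower-order pickups and closes the $C^4$ bound.

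\textbf{Main obstacle.} The delicate step is closing the pointwise bounds on $\partial_x W-\partial_x\bar W$ and $\partial_x^2 W-\partial_x^2\bar W$. Their evolution equations have damping coefficients that degenerate to zero at $x=0$ (because $\bar W$ solves \eqref{eq:barW:dx}), and only a careful use of the three normalization conditions at $x=0$, combined with the explicit Taylor expansions of $\bar W_x$ at the origin and at infinity, converts the apparent indefiniteness into a strictly positive effective damping when tested against specifically chosen rational weights (cf.\ the envelopes in \eqref{eq:V:main:damping}, \eqref{eq:Shaq}); one also needs the Burgers identity satisfied by $\bar W$ to cancel the most dangerous forcing contributions. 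Once this pointwise control is achieved, the bootstrap constants are strictly improved, the solution continues for all $s\geq-\log\eps$, $\tau(t)\to T_*$ and $\xi(t)\to\theta_*$ as $t\to T_*$. The identity $\partial_\theta w(\xi(t),t)=e^s\,\partial_x W(0,s)=-(T_*-t)^{-1}$ gives the blowup rate, and the uniform decay $|\partial_x W(x,s)|\lesssim(1+x^2)^{-1/3}$ translates via \eqref{vladss} into $|w(\theta,T_*)-w(\theta_*,T_*)|\lesssim|\theta-\theta_*|^{1/3}$, which is the cusp statement.

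To obtain the Euler statements, I invert the change of variables by setting $b=(w+z)/2$, $P=((\alpha/2)(w-z))^{1/\alpha}$, and then define $(u_r,u_\theta,\rho)$ through \eqref{scale0}. Strict positivity of $P$, hence of $\rho\ge\tfrac12 R_0^{1/\alpha}\nu_0$, follows from the lower bound on $w-z$ guaranteed by \eqref{eq:w0:C0}, \eqref{eq:kappa:0}, and the $L^\infty$ control on the perturbation in Remark~\ref{rem:Linfinity}. The vorticity bounds come from the transport identity \eqref{eq:varpi}: since $\varpi=(2b-\partial_\theta a)/P$ is transported with forcing $a/\alpha$ that is $L^\infty$-bounded, and $P$ is bounded above and below, we obtain $M^{-2}\le\upomega\le M^2$ for $\eps$ small. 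The finite speed of propagation argument of Remark~\ref{rem:support}, together with the definition \eqref{Omega} of $\Omega(t)$, guarantees that the constructed $(u_r,u_\theta,\rho)$ is the unique Euler solution on $\Omega(t)\subset A_{R_0,R_1}$, and the blowup statement \eqref{eq:thm:blowup} follows from the identity $\partial_\theta u_\theta(r,\theta,t)=\tfrac{r}{2}\partial_\theta(w+z)$ together with the boundedness of $\partial_\theta z$ and the blowup of $\partial_\theta w$ at $(\xi(t),t)$.
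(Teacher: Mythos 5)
Your outline follows the same route as the paper (modulated self-similar variables, constraints at $x=0$ fixing $(\dot\tau,\dot\xi,\dot\kappa)$, weighted $L^\infty$ bootstrap closed with the transport lemmas, explicit properties of $\bar W$ to handle the degenerate damping for $\partial_x W-\partial_x\bar W$ and $\partial_x^2W$, then inversion of the change of variables), and most of it would go through. But there is one genuine gap: the uniform bound on $\partial_\theta z$ up to $T_*$, which you invoke both for the second bullet of the theorem and to deduce \eqref{eq:thm:blowup} from $\partial_\theta u_\theta=\tfrac r2\partial_\theta(w+z)$, is never established and does not follow from your bootstrap (iii). Since $\partial_\theta z=e^{\sfrac{3s}{2}}\partial_x Z$, a bound $|\partial_x Z|\lesssim e^{-s/2}$ (or even the paper's sharper $e^{-(\frac12+\delta)s}$) only gives $|\partial_\theta z|\lesssim e^{(1-\delta)s}$, which diverges as $t\to T_*$. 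The paper needs a separate a posteriori argument (Section~\ref{sec:z:prime}): differentiate \eqref{eq:z:evo}, compose with the Lagrangian flow $\zeta$ of the $z$-transport speed, and show $|\zeta(t)-\xi(t)|\gtrsim T_*-t$ (rather than $(T_*-t)^{3/2}$), using the decomposition \eqref{eq:Kobe} whose leading term is $\tfrac{2\alpha}{1+\alpha}\int\kappa$; this is where $\kappa_0$ large and $\alpha>0$ (i.e.\ $\gamma>1$) enter crucially. Combined with the decay $|W_x|\lesssim(1+x^2)^{-1/3}$ this makes $\int|\partial_\theta w(\zeta(t),t)|\,dt$ finite and closes the Gr\"onwall estimate for $\partial_\theta z$. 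Without some argument of this type your proof of the $W^{1,\infty}$ bound on $z$, and hence of \eqref{eq:thm:blowup}, is incomplete.

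Two smaller points. First, the sharp decay $|\partial_x W|\lesssim(1+x^2)^{-1/3}$, which you list as a bootstrap to be closed by the appendix lemmas, cannot be closed by Lemma~\ref{lem:max:princ} as stated: for the weighted quantity $(x^{2/3}+8)(W_x-\bar W_x)$ the damping degenerates like $x^{-2/3}$ as $|x|\to\infty$, so the uniform lower bound \eqref{eq:max:princ:damping} fails; the paper instead compares the damping pointwise with the $L^1_{x'}$ norm of the nonlocal kernel and uses that the forcing is integrable in $s$ (the Rademacher argument of Section~\ref{sec:Holder:13}), and this also requires the second branch of the data assumption \eqref{eq:big:assume:ab}. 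Second, your modulation bounds are slightly off in detail: $\dot\kappa$ does not decay like $e^{-s/2}$ (it is driven by the $O(1)$ term $a\kappa$ in $F_W$ and is only bounded, $|\dot\kappa|\lesssim M^3$), and $\dot\xi\approx\kappa+\tfrac{1-\alpha}{1+\alpha}Z^0$ is $O(\kappa_0)$, so $\xi(t)=\OO(\eps)$, not $\OO(\eps^{5/4})$ (consistent with your own $\theta_*=\OO(\eps)$); likewise the effective damping for $\partial_x^4W$ is $\tfrac{11}{2}-5\|W_x\|_{L^\infty}+\dots\geq\tfrac14$, not $\tfrac72$. These latter items are harmless for the scheme, but they should be corrected when the estimates are carried out.
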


\begin{remark}
\label{rem:Omega:T*}
With $u=(u_r,u_\theta)$, 
the flow $\eta_u$ solving $\p_t \eta_u = u \circ \eta_u$ with initial datum $\eta_u(r,\theta,-\eps) = (r,\theta)$ is well defined and smooth on the time interval $[-\eps, T]$ for all
$T< T_*$.   Moreover, since $\eta_u(r,\theta, t) = (r,\theta) + \int_{-\eps}^t (u \circ \eta_u)(r,\theta, s) ds$, by \eqref{eq:thm:bnd:1}, we see  that  
$$
\sup_{[-\eps,T_*)}\| \eta_u(\cdot , t)\|_{ L^ \infty (A_{r_0,r_1})} \le C
$$
Hence, by dominated convergence, we may define $\eta_u(r, \theta, T_*) = \lim_{t \to T_*} \eta_u(r, \theta,t )$.   Thus, the set $\Omega(T_*)$ is well defined.
\end{remark}

\begin{remark}  We have established that at the initial singularity time $t=T_*$, both $u_\theta$ and $\rho$ have cusp singularities with $C^ {\sfrac{1}{3}} $ regularity.   For the
case that $\gamma=3$, we have explained how this cusp singularity develops an instantaneous discontinuity and is propagated as a shock wave.   In Section \ref{sec:conclusion} we 
  conjecture that the same is true for the more general solution constructed in the previous theorem.  We note that Alinhac \cite{Al1999a, Al1999b} proved the formation of cusp-type singularities for solutions of a
quasilinear wave equation, but the Euler equations do not satisfy the structure of his equations.
\end{remark}

\begin{corollary}[\bf Open set of initial conditions]
\label{cor:open:set:IC}
The conditions on the initial data $(a_0,z_0,w_0)$ in Theorem \ref{thm:general} may be relaxed so that they may be taken to be in an open neighborhood in the $C^4$ topology.
\end{corollary}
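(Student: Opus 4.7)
The plan is to absorb the rigid equality conditions in \eqref{eq:w0:power:series} via a translation in $\theta$ and a redefinition of the small parameters $\eps$ and $\kappa_0$, and then to observe that every remaining hypothesis in Theorem~\ref{thm:general} on $(w_0,z_0,a_0)$ is a strict inequality, hence an open condition in $C^4(\TT)$. The three degrees of freedom provided by the translation and the two scalar adjustments correspond precisely to the three modulation variables $(\xi,\tau,\kappa)$, which are already dynamically adjusted throughout the proof; we are simply using them to renormalize the initial data.

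Given $(\tilde w_0,\tilde z_0,\tilde a_0)\in C^4(\TT)$ sufficiently close in $C^4$ to a reference datum $(w_0,z_0,a_0)$ satisfying the hypotheses of Theorem~\ref{thm:general}, I would first apply the implicit function theorem to $\partial_\theta^2 \tilde w_0$: since $\partial_\theta^3 w_0(0)=6\eps^{-4}\neq 0$, there exists a unique $\theta_1$ close to $0$ with $\partial_\theta^2 \tilde w_0(\theta_1)=0$. Setting $\tilde\eps:=-1/\partial_\theta \tilde w_0(\theta_1)$ and $\tilde\kappa_0:=\tilde w_0(\theta_1)$, continuity ensures $\tilde\eps$ and $\tilde\kappa_0$ are close to $\eps$ and $\kappa_0$ respectively. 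After the translation $\theta\mapsto\theta-\theta_1$, the first three equalities in \eqref{eq:w0:power:series} hold for the shifted data with $(\tilde\eps,\tilde\kappa_0)$ in place of $(\eps,\kappa_0)$.

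The fourth equality $\partial_\theta^3 w_0(0)=6\eps^{-4}$ is not preserved by a $C^4$ perturbation, but it is never used as an equality in the proof: it is consistent with the Taylor expansion \eqref{eq:bar:Wx:zero} of $\bar W_x$ at the origin and is already encoded in the strict inequality \eqref{eq:big:assume:ab}. Since \eqref{eq:big:assume:ab} and the bounds \eqref{eq:w0:bnd}, \eqref{eq:z0:a0:Cn}, \eqref{eq:initialdata_support}, \eqref{eq:w0:C0}, and the strict lower bound $P_0\geq \nu_0$ are all open conditions in $C^4(\TT)$ (the support condition is preserved automatically under small perturbations by finite speed of propagation applied near the boundary of the support), they persist under a sufficiently small $C^4$ perturbation, provided one reads them with $\tilde\eps$ in place of $\eps$; this uses that $\bar W_x$ is smooth and that the inequalities in \eqref{eq:big:assume:ab} are strict with a definite margin.

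The main obstacle is to verify that the bootstrap argument of Theorem~\ref{thm:general} closes \emph{uniformly} for data in a fixed $C^4$ neighborhood. Because every bootstrap assumption is a strict inequality and the norms controlling the forcing and transport in the $(A,Z,W)$-system depend continuously on the initial data, and because the reparametrization quantities $\theta_1$, $\tilde\eps-\eps$, $\tilde\kappa_0-\kappa_0$ tend to zero with the size of the perturbation, the constants retain a uniform margin and the same argument yields a shock formation result for the perturbed datum, with blowup time $T_*$ and location $\theta_*$ depending continuously on the initial data through the associated modulation variables.
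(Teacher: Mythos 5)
There is a genuine gap at the handling of the fourth normalization $\partial_\theta^3 w_0(0)=6\eps^{-4}$. You correctly reparametrize away the first three equalities in \eqref{eq:w0:power:series} (shift in $\theta$, redefinition of $\eps$, redefinition of $\kappa_0$), mirroring the role of the three modulation variables. But you then assert that the third--derivative equality ``is never used as an equality in the proof'' and is ``already encoded'' in \eqref{eq:big:assume:ab}; neither claim is correct. The equality is used exactly: via the scaling $\partial_\theta^3 w_0(\theta) = \eps^{-4} W_{xxx}(\theta\eps^{-3/2},-\log\eps)$ it forces $W_{xxx}^0(-\log\eps)=6$, which is invoked in the derivation of \eqref{eq:w:xxx:0:est}. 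That estimate, $\abs{W_{xxx}^0(s)-6}\leq \eps^{\delta/2}$, feeds directly into the closure of the $W_x$ bootstrap \eqref{eq:bootstrap:2*}, where one needs $\abs{\tfrac12 W_{xxx}^0(s)-3}+\tfrac{M\ell}{6}+15\ell^2\leq\tfrac{1}{40(1+\ell^2)}$ for $\abs{x}\leq\ell$. The envelope in \eqref{eq:big:assume:ab} only constrains $W_{xxx}^0(-\log\eps)$ to the interval $[6-\tfrac{1}{20},\,6+\tfrac{1}{20}]$; inserting an error of size $\tfrac{1}{40}$ in place of $\eps^{\delta/2}$ makes the required inequality fail at $\ell\to 0$. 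So the exact normalization cannot simply be dropped without re-tuning the bootstrap constants, which you do not do.

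The paper supplies the missing fourth degree of freedom via the rescaling $w_0(\theta)\mapsto\mu^{-1}w_0(\mu\theta)$ with $\mu$ close to $1$: since $\partial_\theta^n w_0(0)\mapsto\mu^{n-1}\partial_\theta^n w_0(0)$, this rescales $\partial_\theta^3 w_0(0)$ by $\mu^2$ while leaving $\partial_\theta w_0(0)$ fixed, so $\mu$ can be chosen to restore the exact equality. One then observes that the rescaled triple $(\tilde a,\tilde w,\tilde z)$ satisfies \eqref{eq:euler:wza} with the right side multiplied by $\mu$, which has no effect on the argument for $\mu$ near $1$, with care (as the paper notes) because the rescaling changes the domain but this is harmless thanks to \eqref{eq:initialdata_support} and finite speed of propagation. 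Your argument should be repaired by inserting this rescaling step; the rest of your openness discussion (translation, redefinition of $\eps,\kappa_0$, stability of the support condition, and openness of the strict inequality assumptions) matches the paper's strategy.
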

\begin{proof}[Proof of Corollary~\ref{cor:open:set:IC}] 
First note that since the system \eqref{eq:euler:wza} has finite speed of propagation, the   support properties  of the initial data described in~\eqref{eq:initialdata_support} (see also~ Remark~\ref{rem:support}) are stable under small perturbations in the $C^4$ topology. Second, note that $\kappa_0$ and $\eps$ are free to be taken in an open set (sufficiently large, respectively sufficiently small), and hence the values of $w_0(0)$ and $\partial_\theta w_0(0)$ stated in \eqref{eq:w0:power:series} can be taken in an open set of possible values. Next, observe that if $\norm{\partial_\theta^4 w_0}_{L^\infty}\leq \eps^{-\sfrac{11}{2}}$ holds (condition which is stable under small $C^4$ perturbations) then a Taylor expansion around the origin yields
\begin{align*}
\partial_{\theta}^2 w_0(\theta)&=\partial_{\theta}^2 w_0(0)+\theta \partial_{\theta}^3 w_0(0)+\OO(\eps^{-\sfrac{11}{2}}\theta^2)\\
&=\partial_{\theta}^2 w_0(0)+6\eps^{-4}\theta +\theta(\partial_{\theta}^3 w_0(0)-6\eps^{-4})+\OO(\eps^{-\sfrac{11}{2}}\theta^2)
\, .
\end{align*}
Hence by continuity, for any $\bar \epsilon>0$ depending on $\eps$, if one assumes $\partial_{\theta}^2 w_0(0)$ and $\partial_{\theta}^3 w_0(0)-6\eps^{-4}$ to be sufficiently small, there exists an $\theta_0$ satisfying  $\abs{\theta_0}\leq \bar \eps$ such that $\partial_{\theta}^2 w_0(\theta_0)=0$. Hence by  the change of coordinates $\theta\mapsto \theta+\theta_0$, and taking $\bar \eps$ to be sufficiently small, we can relax the condition $\partial_\theta^2 w_0(0) = 0$ to the condition that  $\partial_\theta^2 w_0 = 0$ is in a sufficiently small neighborhood of $0$ and that $\partial_{\theta}^3 w_0(0)$ lies in a sufficiently small neighborhood of $6\eps^{-4}$. Next, note that the rescaling $w_0(\theta) \mapsto \mu^{-1}w_0(\mu \theta)$, rescales $\partial_\theta^3 w_0(0)$ and leaves $\partial_\theta w_0(0)$ unchanged.  Strictly speaking, such a rescaling would modify the domain; however, since our analysis only concerns a strict subset of the domain (due to \eqref{eq:initialdata_support}), and we have finite speed of propagation, as long as $\mu$ is sufficiently close to $1$ this $\mu$-rescaling does not pose an issue. Setting 
\[\tilde a(\theta,t)=\mu^{-1}a(\mu \theta,t),\quad\tilde w(\theta,t)=\mu^{-1}w(\mu \theta,t),\quad \tilde z(\theta,t)=\mu^{-1}z(\mu \theta,t)\,,\] 
the equation satisfied by $(\tilde a, \tilde w, \tilde z)$ is of the form  \eqref{eq:euler:wza}, with the right hand side rescaled by a factor of $\mu$. As long as $\mu$ is sufficiently close to $1$,   this rescaling has no effect on the proof of Theorem \ref{thm:general}. Thus the condition on $\partial_\theta^3 w_0(0)$ may be relaxed to the condition that $\partial_\theta^3 w_0(0)$ lies in a sufficiently small neighborhood of $6\eps^{-4}$. Finally, note that for $\theta$ small, \eqref{eq:big:assume:ab} is implied by \eqref{eq:w0:power:series} and \eqref{eq:w0:bnd}. For $\theta$ away from a small neighborhood of $0$, the condition \eqref{eq:big:assume:ab} is an open condition. Thus \eqref{eq:big:assume:ab} does not pose an impediment to taking the initial data to lie in an open set.
\end{proof}

\subsection{Self-similar variables and solution ansatz}\label{sec:self-similar}
For the purpose of satisfying certain normalization constraints on the developing shock,
we introduce three dynamic variables $\tau, \xi, \kappa \colon [-\eps,T_*] \to \RR$, and fix their initial values as at time $t= -\eps$ as
\begin{align}
\tau(-\eps) = 0, \qquad \xi(-\eps) = 0,\qquad \kappa(-\eps) = \kappa_0.
\label{eq:modulation:initial:time}
\end{align}
The blowup time $T_*$ and the blowup location $\theta_*$ are defined precisely in Remark~\ref{rem:blowup:details}. For the moment we only record that $T_* = \OO(\eps^{\sfrac 54})$, $\tau(T_*) = T_*$, and that by construction we will ensure $\tau(t) > t$ for all $t\in [-\eps,T_*)$ (see~Remark~\ref{rem:blowup:details} below).

We introduce the following self-similar variables
\begin{align}
x(\theta,t):=\frac{\theta-\xi(t)}{(\tau(t)-t)^{\frac32}} \, ,  \qquad s(t):=-\log(\tau(t)-t)\,.
\label{eq:x:s:def}
\end{align}
The blowup time is defined by the relation $\tau(T_*) = T_*$. In the self-similar time, the blowup time corresponds to $s\to + \infty$.  
We will use frequently the identities 
\begin{align*}
\tau-t=e^{-s},\qquad \frac{ds}{dt}=\frac{1-\dot \tau}{\tau-t}=(1-\dot \tau)e^s,
\end{align*}
where we adopt the notation $\dot f=\frac{df}{dt}$, and 
\begin{align*}
x = e^{\frac32s} (\theta-\xi(t)), \qquad 
\partial_{\theta} x= e^{\frac32s},\qquad  \partial_t x =\frac{-\dot \xi}{(\tau-t)^{\frac32}}-\frac{3(\dot \tau-1)(\theta-\xi)}{2(\tau-t)^{\frac52}}=-e^{\frac32s}\dot \xi+\frac32(1-\dot\tau)xe^s \, .
\end{align*}
Notice that at $t=- \eps $, we have $s= - \log \eps $ and hence $e^{-s}= \eps$.

Using the self-similar variables $x$ and $s$ we rewrite $w$, $z$ and $a$ as 
\begin{align}
w(\theta,t)=e^{-\frac s2}W(x,s)+\kappa (t)\,, \qquad 
z(\theta,t)=Z(x,s) \,, \qquad 
a(\theta,t)=A(x,s) \, .
\label{eq:ss:ansatz}
\end{align}
As mentioned in Remark~\ref{rem:support}, the functions $(W,Z,A)$ are defined on all of $\RR$, but they are constant in $x$ on the complement of the expanding set $\{ x\colon - \frac{3\pi}{4} e^{\sfrac{3s}{2}} \leq x \leq    \frac{3\pi}{4} e^{\sfrac{3s}{2}} \}$. 

Inserting the ansatz \eqref{eq:ss:ansatz} in the system \eqref{eq:euler:wza}, we obtain that  $W$, $Z$ and $A$ satisfy the equations 
\begin{align*}
&(1-\dot \tau)\left(\partial_s-\tfrac12\right) W+ \left(e^{\frac s2}\left(\kappa -\dot \xi+\tfrac{1-\alpha}{1+\alpha}Z\right)+\tfrac32(1-\dot\tau)x+W\right)\p_xW
\notag\\&\qquad\qquad
=- e^{-\frac s2} \dot \kappa  - A e^{-\frac s2} \left( \tfrac{1-2\alpha}{1+\alpha} Z-  \tfrac{3+2\alpha}{1+\alpha}(e^{-\frac s2}W+\kappa)\right) \\
&(1-\dot \tau)\partial_sZ+ \left(e^{\frac s2} \left(\tfrac{1-\alpha}{1+\alpha} \kappa -\dot \xi\right)+\tfrac{1-\alpha}{1+\alpha}W+\tfrac32(1-\dot\tau)x+  e^{\frac s2}Z\right)\p_xZ 
\notag\\&\qquad\qquad
= - A e^{-s} \left( \tfrac{1-2\alpha }{1+\alpha} (e^{-\frac s2}W+\kappa) -  \tfrac{3+2\alpha}{1+\alpha} Z\right) 
 \\
&(1-\dot \tau)\partial_s A+ \left(e^{\frac s2}\left(\tfrac{1}{1+\alpha}(Z+\kappa)-\dot \xi\right)+\tfrac{1}{1+\alpha}W+\tfrac32(1-\dot\tau)x\right)\p_xA
\notag\\&\qquad\qquad
= \tfrac{1}{2(1+\alpha)}e^{-s} \left(-4A^2+ (e^{-\frac s2}W + \kappa +Z)^2-\alpha  (e^{-\frac s2}W +\kappa -Z)^2\right)\,.
\end{align*}
It is convenient to introduce the transport speeds  
\begin{subequations}
\begin{align}
g_W &:= \tfrac{1}{1-\dot\tau} e^{\frac s2} \left(\kappa-\dot \xi+\tfrac{1-\alpha}{1+\alpha}Z\right),
\label{eq:gW:def}\\ 
g_Z &:=\tfrac{1}{1-\dot\tau} \left(e^{\frac s2}\left(\tfrac{1-\alpha}{1+\alpha} \kappa - \dot \xi\right)+\tfrac{1-\alpha}{1+\alpha}W \right),
\label{eq:gZ:def} \\
g_A &:= \tfrac{1}{1-\dot\tau} \left(e^{\frac s2}\left(\tfrac{1}{1+\alpha}(Z+\kappa)-\dot \xi\right)+\tfrac{1}{1+\alpha}W \right),
\label{eq:gA:def}
\end{align}
\end{subequations}
and the forcing terms
\begin{align*}
F_W &:=-\tfrac{e^{-\frac s2}}{(1+\alpha)(1-\dot \tau)} \left(  (1-2\alpha) A Z- (3+2\alpha) A (e^{-\frac s2}W+\kappa)   \right),\\
F_Z &:=-\tfrac{e^{-s}}{(1+\alpha)(1-\dot \tau)} \left(    (1-2\alpha)A (e^{-\frac s2}W+\kappa) - (3+2\alpha)A Z   \right),\\
F_A &:=\tfrac{e^{-s}}{2(1+\alpha)(1-\dot \tau)}\left(-4 A^2+ (e^{-\frac s2}W+\kappa+Z)^2-\alpha  (e^{-\frac s2}W+\kappa-Z)^2 \right) \, ,
\end{align*}
so that we can rewrite the evolution equations for $W$, $Z$ and $A$ as
\begin{subequations}
\label{eq:ssWZA}
\begin{align}
 \left(\partial_s-\tfrac12\right) W+ \left(g_W+\tfrac{3x}{2}+\tfrac{1}{1-\dot \tau} W\right)\p_xW  
&=- e^{-\frac s2} \tfrac{\dot \kappa}{1-\dot \tau} +  F_W \,, \label{e:W_eq}\\
 \partial_sZ+ \left(g_Z+\tfrac{3x}{2}+\tfrac{ 1}{1-\dot \tau} e^{\frac s2}Z \right)\p_xZ 
&= F_Z  \,,  \label{e:Z_eq}\\
 \partial_s A+ \left(g_A+\tfrac{3x}{2}\right)\p_xA
&= F_A \,. \label{e:A_eq}
\end{align}
\end{subequations} 
As long as the solutions remain smooth, the $(W,Z,A)$ system \eqref{eq:ssWZA} is equivalent to the original $(w,z,a)$ formulation in \eqref{eq:euler:wza}. In particular, the local well-posedness of \eqref{eq:ssWZA} from $C^4$-smooth initial datum of compact support follows from the corresponding well-posedness theorem for \eqref{eq:euler:wza}. The purpose of this section is to show that the dynamic modulation variables $(\kappa, \xi, \tau)$ remain uniformly bounded in $C^1$ and that the functions $(W,Z,A)$ remain uniformly bounded in $C^4$ for all $s\in [-\log \eps,\infty)$.
Taking into account the self-similar transformation \eqref{eq:x:s:def}--\eqref{eq:ss:ansatz}, and in view of the continuation criterion~\eqref{eq:continuation:criterion}, this means that no singularities occur prior to time $t= T_*$. Additionally, we will ensure that $\p_xW(0,s) = -1$ for all $s\geq -\log \eps$, which in turn implies through the self-similar change of coordinates that $\p_\theta w$ blows up as $\sfrac{-1}{(T_*-t)}$ as $t\to T_*$.

\begin{remark}[\bf The stable globally self-similar solution of the 1D Burgers equation]
\label{rem:Burgers:SS}
We view the   evolution \eqref{e:W_eq} as a perturbation of the 1D Burgers dynamics. Indeed, if we set $g_W = \dot \tau = \dot \kappa = F_W \equiv 0$ in \eqref{e:W_eq}, the resulting steady equation is  the globally self-similar version of the 1D Burgers equation as described in~\eqref{eq:barW:dx}. We recall that this steady globally self-similar solution $\bar W$ given explicitly by \eqref{eq:barW:def}, and that its Taylor series expansions of $\p_x\bar W$ at $x=0$ and $x= \infty $, respectively,  are given by
\begin{subequations}
\begin{align}
\p_x\bar W &=  -1 + 3x^2 - 15 x^4 + \OO(x^6) \quad \mbox{for} \quad |x|\ll 1 \,,
\label{eq:bar:Wx:zero} \\
\p_x\bar W &=  - \tfrac{1}{3 }x^{- \frac 23} - \tfrac{1}{9 } x^{- \frac 43} + \OO(x^{-\frac 83}) \quad \mbox{for} \quad |x|\gg 1
\label{eq:bar:Wx:infinity} \,.
\end{align}
\end{subequations}
In the proof of our estimates for $\p_x W$ and $\p_{xx} W$ we will use a number of properties for $\bar W$, which may be checked directly using its explicit formula~\eqref{eq:barW:def}.
\end{remark}

At this stage it is convenient to record the differentiated version of the system \eqref{eq:ssWZA}.
For  $n  \in \NN$, after applying $\partial_x^n$ to \eqref{eq:ssWZA}  we obtain from the Leibniz rule that
\begin{subequations}
\begin{align}
 \left(\partial_s+ \tfrac{3n -1}{2} + \tfrac{n+{\bf 1}_{n\neq1}}{1-\dot \tau} \partial_x W+ n \partial_x g_W  \right) \partial^n_x W
 &+ \left(g_W+\tfrac{3x}{2} +\tfrac{1}{1-\dot \tau}W \right) \partial^{n+1}_x W
= F_W^{(n)}
\label{eq:multi:W}\\
 \left(\partial_s+ \tfrac{3n}{2} + \tfrac{n+{\bf 1}_{n\neq 1}}{1-\dot \tau}  e^{\frac s2} \partial_x Z+ n \partial_x g_Z  \right) \partial^n_x Z
 &+ \left(g_Z+\tfrac{3x}{2}+\tfrac{1}{1-\dot \tau} e^{\frac{s}{2}}Z \right) \partial^{n+1}_x Z 
= F_Z^{(n)}
\label{eq:multi:Z}\\
\left( \partial_s + \tfrac{3n}{2}   + n \partial_x g_A   \right) \partial^n_x A 
&+ \left(g_A+\tfrac{3x}{2} \right)\partial^{n+1}_x A  
= F_A^{(n)}
\label{eq:multi:A}
\end{align}
\end{subequations}
where the forcing terms are given by 
\begin{align*}
 F_W^{(n)} 
 &:= \partial^n_x F_W 
 -  {\bf 1}_{n\geq 2}   \partial^{n}_x g_W \partial_x W - {\bf 1}_{n\geq 3} \sum_{k=2}^{n-1} {n \choose k}  \left( \tfrac{1}{1-\dot \tau} \partial^{k}_x W   
 +    \partial^{k}_x g_W \right) \partial^{n-k+1}_x W   \\
 F_Z^{(n)} 
 &:= \partial^n_x F_Z 
 -  {\bf 1}_{n\geq 2}   \partial^{n}_x g_Z \partial_x Z - {\bf 1}_{n\geq 3} \sum_{k=2}^{n-1} {n \choose k} \left(  \tfrac{1}{1-\dot \tau} e^{\frac s2} \partial^{k}_x Z + \partial^{k}_x g_Z \right) \partial^{n-k+1}_x  Z 
 \\
 F_A^{(n)} 
 &:= \partial^n_x F_A 
 - {\bf 1}_{n\geq 2} \sum_{k=2}^n {n \choose k}    \partial_x^{k} g_A\, \partial^{n-k+1}_x A     
 \, .
\end{align*}

\subsection{Constraints on $W$ at $x=0$ and the definitions of the modulation variables}
Inspired by the self-similar analysis of the 1D Burgers equation in~\cite{CoGhMa2018}, we impose the following constraints at $x=0$, which fully characterize the developing shock:
\begin{align}
W(0,s)=0,\qquad \p_xW(0,s)=-1,\qquad \p_x^2W(0,s)=0 \,.
\label{eq:constraints}
\end{align}
These constraints will fix our choices of $\tau(t)$, $\xi(t)$, and $\kappa (t)$.
In order to compactly write the computations in this section,  we shall denote
\begin{align}
\varphi^0(s) = \varphi(0,s)\,, \qquad \varphi_x(x,s) = \p_x \varphi(x,s) \,, \ \varphi_{xx}(x,s) = \p_x^2 \varphi(x,s)\,, \ \text{ etc.}
\label{eq:funky:notation}
\end{align}
for any function $\varphi = \varphi(x,s)$.

In view of \eqref{eq:constraints}, in  addition to \eqref{e:W_eq} we need to record \eqref{eq:multi:W} for $n = 1$ and $n = 2$. Using \eqref{eq:multi:W}  we spell out these two equations
\begin{subequations}
\begin{align}
\left( \partial_s+1 +\tfrac{1}{1-\dot \tau}W_x +\tfrac{(1-\alpha)}{(1+\alpha)(1-\dot \tau)}e^{\frac s2}Z_x  \right)W_x+ \left(g_W+\tfrac{3x}{2}+\tfrac{1}{1-\dot \tau}W\right)W_{xx}  
&= F_{W}^{(1)}\label{eq:Wx:eq}\\
\left( \partial_s+\tfrac52 +\tfrac{3}{1-\dot \tau} W_x +\tfrac{2(1-\alpha) }{(1+\alpha)(1-\dot \tau)}e^{\frac s2}Z_x  \right)W_{xx}+ \left(g_W+\tfrac{3x}{2} +\tfrac{1}{1-\dot\tau}W\right)W_{xxx}  
&= F_{W}^{(2)}\label{eq:Wxx:eq}
\end{align}
\end{subequations}
where the forcing terms are given by 
\begin{align}
F_{W}^{(1)} :=\partial_x F_W \, , \qquad \mbox{and} \qquad 
F_{W}^{(2)} :=\partial_{xx} F_W-\tfrac{1-\alpha}{(1+\alpha)(1-\dot \tau)}e^{\frac s2} Z_{xx}W_x \, .\label{e:ForcingW}
\end{align}

Using the notation~\eqref{eq:funky:notation}, and  inserting the constraints \eqref{eq:constraints} into \eqref{eq:Wx:eq} we arrive at 
\begin{align*}
 -  \dot \tau + \tfrac{(1-\alpha)}{1+\alpha} e^{\frac s2} Z_x^0(s)
 &=- (1-\dot \tau) F_W^{0,(1)}(s) \,,
\end{align*}
which implies  that
\begin{align}
\label{eq:tau:dot}
\dot \tau 
= \tfrac{1-\alpha}{1+\alpha} e^{\frac s2} Z_x^0(s)    
- e^{-\frac s2}  \left( \tfrac{1-2\alpha}{1+\alpha} (AZ)_x^0(s) - \tfrac{3+2\alpha}{1+\alpha} (\kappa A_x^0(s)- e^{-\frac s2}A^0(s))\right)    \,.
\end{align}
Plugging in the constraints \eqref{eq:constraints} into  \eqref{e:W_eq} and \eqref{eq:Wxx:eq},  we further obtain that
\begin{subequations}
\begin{align}
 -g_W^0(s)  
 &= F_W^0(s) -  \tfrac{1}{1-\dot \tau} e^{-\frac s2} \dot \kappa
 \label{eq:gW:1} \\
 g_W^0(s) W_{xxx}^0(s)  
 &= F_W^{0,(2)}(s)
  \label{eq:gW:2} 
\, .
\end{align}
\end{subequations}
Since we will prove that $W_{xxx}^0(s) \geq 5$, we solve the  system \eqref{eq:gW:1}--\eqref{eq:gW:2} as 
\begin{subequations}
\begin{align}
\dot \xi - \kappa - \tfrac{1-\alpha}{1+\alpha} Z^0(s)
&= - (1-\dot \tau)  e^{-\frac s2}  \frac{F_W^{0,(2)}}{W_{xxx}^0(s)} \,,
\label{eq:dot:xi}\\
\dot \kappa
&= (1-\dot \tau) e^{\frac s2} \left( F_W^0(s) + \frac{F_W^{0,(2)}}{W_{xxx}^0(s)} \right) \,.
\label{eq:dot:kappa}
\end{align}
\end{subequations}
The equations \eqref{eq:tau:dot}, \eqref{eq:dot:xi}, and \eqref{eq:dot:kappa} are the evolution equations for the dynamic modulation variables which are used in the proof. We also note here that in view of \eqref{eq:gW:def} and \eqref{eq:dot:xi} we may write 
\begin{align}
g_W(x,s) = \frac{F_W^{0,(2)}}{W_{xxx}^0(s)} + \frac{(1-\alpha) }{(1+\alpha)(1-\dot \tau)}e^{\frac s2}  \left( Z(x,s) - Z^0(s) \right),
\label{eq:gW:est}
\end{align} 
which provides us with a useful bound for $g_W$ for $|x|\les 1$.

\subsection{Bootstrap assumptions}
For the dynamic modulation variables,  we assume that
\begin{subequations}
\begin{align}
&\abs{\kappa(t)} \leq 2 \kappa_0, \qquad \abs{\tau(t)} \leq \eps^{\frac 54} , \qquad \abs{\xi(t)} \leq  6 M \eps 
\label{eq:speed:bound} \\
&\abs{\dot \kappa(t)} \leq M^3, \qquad \abs{\dot \tau(t)} \leq \eps^{\frac 14} , \qquad \abs{\dot \xi(t)} \leq 3M
\label{eq:acceleration:bound}
\end{align}
\end{subequations}
for all $t < T_*$.

Note that from \eqref{eq:L:infinity} and \eqref{eq:speed:bound} we deduce that (we use $\kappa_0 \leq M$)
\begin{align}
\norm{W(s)}_{L^{\infty}} \leq 2 M  e^{\frac s2}  \qquad \mbox{and} \qquad \norm{Z(s)}_{L^\infty} + \norm{A(s)}_{L^\infty} \leq M
\label{eq:bootstrap:-1}
\end{align}
for all $s\geq - \log \eps$. Therefore, no bootstrap assumptions are needed for the $C^0$ norms of $(W,A,Z)$.

For the higher order derivatives of $W$ we assume the following   estimates for all times $s\geq -\log \eps$ 
\begin{align}
\norm{\partial_x^3 W}_{L^{\infty}} \leq M^{ \frac 34 }, \quad \norm{\partial_x^4 W}_{L^{\infty}} \leq M \, .
\label{eq:bootstrap:1}
\end{align}
We further assume the more precise bounds
\begin{align}
\abs{W_{x}(x,s) - \bar W_x(x)} &\leq \frac{x^2}{20(1+x^2)}\,,
\label{eq:bootstrap:2*} \\
\abs{W_{xx}(x,s)} &\leq \frac{12 \abs{x}}{(1+x^2)^{\sfrac 12}}  \,,
\label{eq:bootstrap:2@} \\
\abs{W_{xxx}(0,s) - 6}  &\leq 1
 \label{eq:bootstrap:2}\,,
\end{align} 
where $\bar W$ is the exact self-similar solution of the Burgers equation given by~\eqref{eq:barW:def} (see \cite{CaSmWa96}). A comment is in order concerning \eqref{eq:bootstrap:2*}: this inequality and properties of the function $\bar W_x$ imply that
 \begin{align}
\norm{W_x(\cdot,s)}_{L^\infty} \leq 1 \qquad \mbox{for all} \qquad s \geq  -\log \eps.
 \label{eq:bootstrap:2**}
 \end{align}
Moreover, we note that \eqref{eq:bootstrap:2@} implies 
\begin{align}
\norm{W_{xx}(\cdot,s)}_{L^\infty} \leq  12  \qquad \mbox{for all} \qquad s \geq  -\log \eps.
 \label{eq:bootstrap:2@@}
\end{align}
  
For the functions $Z$ and $A$ our bootstrap assumptions are
\begin{align}
\norm{\partial_x^n Z}_{L^{\infty}}  + \norm{\partial_x^n A}_{L^{\infty}}   \leq M e^{- (\frac{1}{2}+\delta)s }\,,
\label{eq:bootstrap:3}
\end{align}
for $1\leq n \leq 4$, where $\delta = \delta(\alpha) >0$ is  defined as
\begin{align}
\delta = \frac{\min\{\alpha,1\}}{2(1+\alpha)} > 0 .
\label{eq:delta:def}
\end{align}
Note, that by definition,  we have $\delta \leq \frac 14$. Moreover, $\delta$ is independent of $\eps$ or $M$, and depends only on $\alpha$. We use essentially that $\gamma>1$ to ensure that $\delta>0$. 
 
\begin{remark}[\bf Estimating the blowup time and the blowup location]
\label{rem:blowup:details}
The blowup time $T_*$ is defined uniquely by the condition $\tau(T_*) = T_*$ which in view of \eqref{eq:modulation:initial:time} is equivalent to
\[
\int_{-\eps}^{T_*} (1-\dot \tau(t)) dt = \eps \, .
\]
We note that in view of the $\dot \tau$ estimate in \eqref{eq:acceleration:bound}, we have that $\abs{T_*} \leq 2 \eps^{\sfrac 54}$. We also note here that the bootstrap assumption  \eqref{eq:acceleration:bound} and the definition of $T_*$ ensures that  $\tau(t) > t$ for all $t\in [-\eps, T_*)$. Indeed, when $t = - \eps$ we have $\tau(-\eps) = 0 > -\eps$, and the function $t\mapsto \int_{-\eps}^t (1-\dot \tau) dt' - \eps = t -\tau(t)$ is strictly increasing.
The blowup location is determined by $\theta_* = \xi(T_*)$, which by \eqref{eq:modulation:initial:time} is the same as 
\[
\theta_* = \int_{-\eps}^{T_*} \dot \xi(t) dt \, .
\]
In view of \eqref{eq:acceleration:bound} we deduce that $\abs{\theta_*} \leq 6 M \eps$, so that the blowup location is $\OO(\eps)$ close to the origin. 

\end{remark}

\subsection{Closure of bootstrap} 

Throughout the proof we shall use the notation $\les$ to denote an inequality which holds up to a sufficiently large multiplicative constant $C>0$, which may only depend on $\alpha$ (hence on $\gamma$), but not on $s$, $M$, or $\epsilon$.  

\subsubsection{The $Z$  estimates}

First we consider the equation obeyed by $Z_x$, given by \eqref{eq:multi:Z} with $n=1$. Recalling \eqref{eq:gZ:def}, and appealing to the bootstrap assumptions \eqref{eq:acceleration:bound}, \eqref{eq:bootstrap:2*} (in fact, we use its consequence, the bound \eqref{eq:bootstrap:2**}), and \eqref{eq:bootstrap:3}, we see that the damping term in the $Z_x$ evolution may be bounded from below as
\begin{align}
\frac{3}{2} + \frac{e^{\frac{s}{2}} Z_x }{1-\dot \tau}  + \partial_x g_Z&=\frac{3}{2} + \frac{e^{\frac{s}{2}} Z_x }{1-\dot \tau} +
\frac{(1-\alpha)W_x}{(1-\dot\tau)(1+\alpha)} \notag\\
&\geq \frac{3}{2}  -  (1+2\eps^{\frac 14})\left(M \eps^{\delta} +\frac{\abs{1-\alpha}}{1+\alpha}\right) 
\geq \frac{1}{2}+ \delta 
\label{eq:Zx:damping}
\end{align}
for all $s\geq -\log\eps$, where we have used the parameter $\delta = \delta(\alpha)$ defined in \eqref{eq:delta:def} above. In deriving \eqref{eq:Zx:damping},  we have used that 
$$
(1+2\eps^{\frac 14})\left(M  \eps^\delta +\abs{\frac{1-\alpha}{1+\alpha}}\right) 
\leq (1+2\eps^{\frac 14})\left(M  \eps^\delta + 1- 2 \delta \right)
\leq 1-\delta   
$$
which is true as long as $\eps$ is taken to be sufficiently small, depending only on $\alpha$ (through $\delta$), and on $M$.

On the other hand, the forcing term in the $Z_x$ equation, $F_Z^{(1)} = \p_x F_Z$ may be estimated using 
\eqref{eq:L:infinity}, \eqref{eq:speed:bound}, \eqref{eq:bootstrap:1}, and \eqref{eq:bootstrap:3} as 
\begin{align}
\norm{F_Z^{(1)}}_{L^\infty}  
&\les \frac{e^{-s}}{1-\dot \tau} \left(   \norm{A_x}_{L^\infty}\left(  \norm{(e^{-\frac s2} W + \kappa)}_{L^\infty} +   \norm{Z}_{L^\infty}\right)+ \norm{A}_{L^\infty} \left(  e^{-\frac s2} \norm{W_x}_{L^\infty} +  \norm{Z_x}_{L^\infty} \right)   \right) \notag\\
&\les M e^{-s} \left( M e^{-(\frac 12 + \delta)s}  +     e^{-\frac s2}    \right) \notag   \\
&\les  M^2e^{-\frac {3}{2} s} \, .
\label{eq:Zx:forcing} 
\end{align}
With \eqref{eq:Zx:damping} and \eqref{eq:Zx:forcing}, from \eqref{eq:multi:Z} with $n=1$ and a standard maximum principle argument (cf.~Lemma~\ref{lem:transport}, estimate \eqref{eq:lem:transport:2}, with $\lambda_D = \frac 12 + \delta$, $\lambda_F=\frac 32$, and $s_0 = -\log \eps$), we obtain that
\begin{align}
\norm{Z_x(s)}_{L^\infty} 
&\les \norm{Z_x(-\log \eps)}_{L^\infty} e^{-(\frac 12+\delta)(s+\log \eps)} +  M^2  e^{(1-\delta)\log \eps} e^{-(\frac 12+\delta)s} 
\notag\\
&\les \left( \eps^{1-\delta}+M^2\eps^{1-\delta}\right) e^{-(\frac 12+\delta)s} \notag
\end{align}
where we used \eqref{eq:z0:a0:Cn} to deduce $ \norm{Z_x(-\log \eps)}_{L^\infty}  = \eps^{\frac 32} \norm{\partial_\theta z_0}_{L^\infty} \leq \eps^{\frac 32}$. Then, taking  $\eps$ sufficiently small in terms of $M$, and using $\delta \leq 1/4$ we obtain
\begin{align}
\norm{Z_x(s)}_{L^\infty} \leq \eps^{\frac 14} e^{-(\frac 12+\delta) s}   \leq \frac{M}{2} e^{-(\frac 12+\delta) s}\,,  \label{eq:Zx:bnd}
\end{align}
closing the bootstrap \eqref{eq:bootstrap:3} for $Z_x$.  

Similarly to the estimate for $\partial_x Z$, we note that for $2\leq n\leq 4$, the damping term in \eqref{eq:multi:Z} may be bounded from below as
\begin{align}
\frac{3n}{2} + \frac{n+1}{1-\dot \tau} e^{\frac s2} \partial_x Z+ n \partial_x g_Z
&\geq \frac{3n}{2} - n (1+2\eps^{\frac 14}) \norm{W_x}_{L^\infty} - (n+1) (1+2\eps^{\frac 14})  e^{\frac s2} \norm{\partial_x Z}_{L^\infty}
\notag\\
&\geq \frac{3n}{2} - n (1+2\eps^{\frac 14}) - 5 (1+2\eps^{\frac 14}) M \eps^\delta
\geq \frac{3}{4}\,,
\label{eq:Zx:multi:damping}
\end{align}
for all $s\geq -\log \eps$, 
by appealing to our bootstrap assumptions and by assuming $\eps$ is sufficiently small in terms of $M$.  On the other hand, using our bootstrap assumptions, and the strong bound established earlier in \eqref{eq:Zx:bnd}, one may show that the forcing term on the right side of \eqref{eq:multi:Z} may be estimated as
\begin{align}
\norm{F_Z^{(n)}}_{L^\infty}
&\les \norm{\partial^n_x F_Z }_{L^{\infty}}
 + \norm{ \partial^{n}_x g_Z}_{L^{\infty}}\norm{ \partial_x Z}_{L^{\infty}} + {\bf 1}_{n\geq 3} \sum_{k=2}^{n-1} \left(  e^{\frac s2} \norm{\partial^{k}_x Z}_{L^{\infty}} + \norm{\partial^{k}_x g_Z}_{L^{\infty}} \right) \norm{\partial^{n-k+1}_x  Z }_{L^{\infty}\notag}\\
&\les
 M^2 e^{-s} + M \eps^{\frac 14} e^{-(\frac 12 +\delta)} + {\bf 1}_{\{ n\geq 3\}} \sum_{k=2}^{n-1}  M \norm{\p_x^{n-k+1} Z}_{L^\infty} \notag\\
&\les M\left(\eps^{\frac 14} e^{-(\frac 12+ \delta)s} +  {\bf 1}_{\{ n\geq 3\}} \sum_{k=2}^{n-1}   \norm{\p_x^{n-k+1} Z}_{L^\infty}\right) \,,
\label{eq:Zx:multi:forcing}
\end{align}
where we have assumed $\eps$ to be sufficiently small, dependent on $M$ in order to bound the first term on the second line in terms of the second term. We also remark that since $\p_x^n Z(\cdot,-\log \eps) = \eps^{\frac{3n}{2}} \partial_\theta^n z_0(\cdot)$,  by \eqref{eq:z0:a0:Cn} we have 
\[\norm{\p_x^n Z(\cdot, -\log \eps)}_{L^\infty} \leq \eps^3\,,\]
 for all $n\geq 2$. 

Let us first treat the case $n=2$, when the second term on the right side of \eqref{eq:Zx:multi:forcing} is absent. 
Therefore, in view of \eqref{eq:Zx:multi:damping}--\eqref{eq:Zx:multi:forcing}, and applying Lemma~\ref{lem:transport} to the evolution equation for $\p_x^n Z$ given by \eqref{eq:multi:Z} (with $\lambda_D = \frac 34 $, ${\mathcal F}_0 = M\eps^{\frac{1}{4}}$, and $\lambda_F = \frac{1}{2}+\delta$), we arrive using \eqref{eq:lem:transport:1} at 
\begin{align}
\norm{\partial_x^2 Z(s)}_{L^\infty}
&\les \norm{\p_x^2 Z(\cdot, -\log \eps)}_{L^\infty} e^{-\frac 34 (s+\log\eps)} + M \eps^{\frac{1}{4}} e^{-(\frac 12 + \delta)s}
\notag\\
&\les \eps^{\frac{9}{4}} e^{-\frac 34s} +  M \eps^{\frac{1}{4}} e^{-(\frac 12 + \delta)s} \les  M \eps^{\frac{1}{4}} e^{-(\frac 12 + \delta)s}
 \label{vs1}
\end{align}
for all $s\geq - \log \eps$.  

 With \eqref{vs1}
 in hand, we return to treat the case $n=3$. Then the second term on the right side of \eqref{eq:Zx:multi:forcing} is estimated by a constant multiple of $M^2 \eps^{\frac{1}{4}} e^{-(\frac{1}{2}+\delta)s} $. Therefore, the total estimate on the force 
 for $\p_x^3 Z$ is given by  $\norm{F_Z^{(3)}}_{L^\infty} \les M^2 \eps^{\frac{1}{4}} e^{-(\frac{1}{2}+\delta)s} $. The only modification,  as compared to the case $n=2$,  is that 
$M$ becomes $M^2$. Therefore, an argument similar to the one yielding \eqref{vs1}
gives the estimate 
\begin{align}
\norm{\partial_x^3 Z(s)}_{L^\infty} \les M^2 \eps^{\frac{1}{4}} e^{-(\frac{1}{2}+\delta)s} \, .
\label{eq:Zx:multi:bound:sharp:3}
\end{align}
Using \eqref{vs1} and \eqref{eq:Zx:multi:bound:sharp:3}, we next return to the forcing estimate \eqref{eq:Zx:multi:forcing} for $n=4$. Similar arguments yield 
$\norm{F_Z^{(4)}}_{L^\infty} \les M^3 \eps^{\frac{1}{4}} e^{-(\frac{1}{2}+\delta)s}$, by taking $\eps$ to be sufficiently small, in terms of $M$. Yet another application of Lemma~\ref{lem:transport}, similarly to \eqref{vs1} implies that 
\begin{align}
\norm{\partial_x^4 Z(s)}_{L^\infty} \les M^3 \eps^{\frac{1}{4}}e^{-(\frac{1}{2}+\delta)s} \, .
\label{eq:Zx:multi:bound:sharp:4}
\end{align}
In conclusion,  assuming that $\eps$ is taken to be sufficiently small, dependent on $M$, then 
the bounds \eqref{vs1}, \eqref{eq:Zx:multi:bound:sharp:3}, and \eqref{eq:Zx:multi:bound:sharp:4} close the bootstrap assumptions for $\p_x^n Z$ (with $2\leq n \leq 4$) stated in \eqref{eq:bootstrap:3}.

\subsubsection{The $A$  estimates}
Next we turn to the $\p_x^n A$ estimates  for $1\leq n \leq 4$. These bounds are established very similarly to the $Z$ estimates proven earlier.  The damping term in \eqref{eq:multi:A} is estimated using \eqref{eq:bootstrap:2**} and \eqref{eq:bootstrap:3} as
\begin{align}
 \frac{3n}{2} + n \p_x g_A 
 &=  \frac{3n}{2} + \frac{n (W_x + e^{\frac s2} Z_x)}{(1+\alpha)(1-\dot \tau)}   \geq \frac{3n}{2} - \frac{n (1+2\eps^{\frac 14})(1+M \eps^{\delta})}{1+\alpha}  \geq  \frac{n}{2} + \delta  \, ,
 \label{eq:A:multi:damping}
\end{align}
upon taking $\eps$ small enough in terms of $\delta$ (as defined in \eqref{eq:delta:def} above) and in terms of $\alpha>0$ and $M$.  The forcing term on the right side of \eqref{eq:multi:A} may be bounded from above using our bootstrap assumptions as
\begin{align}
 \norm{F_A^{(n)}}_{L^\infty} 
 &\les  M^2 e^{-s} + M  {\bf 1}_{\{ n\geq 2\}} \sum_{k=2}^{n} \norm{\p_x^{n-k+1} A}_{L^\infty} 
 \,.
 \label{eq:A:multi:forcing} 
 \end{align}
Moreover, note that by \eqref{eq:z0:a0:Cn} we have 
\[\norm{\p_x^n A(\cdot, -\log \eps)}_{L^\infty} \leq \eps^{\frac 32}\,,\]
 for all $n\geq 1$. At this stage one may employ a 
similar scheme to the one employed in the $Z$ estimates. First, we treat the case $n=1$ since in that case the second forcing term on the right side of \eqref{eq:A:multi:forcing} is absent. With \eqref{eq:A:multi:damping} in mind we apply Lemma~\ref{lem:transport}, and deduce (similarly to \eqref{vs1}) that 
\begin{align}
\norm{\p_x A(s)}_{L^\infty}&\leq \eps^{\frac14} e^{-(\frac 12+\delta)s}  \, ,\label{eq:Ax:sharp:bnd} 
\end{align}
where again we absorbed $M^2$ and the implicit constants by assuming $\eps$ to be sufficiently small.
Using the bound \eqref{eq:Ax:sharp:bnd} we may return the case $n=2$, and use that the extra forcing term present on the right side of \eqref{eq:A:multi:forcing} is bounded a constant multiple of $M \norm{\p_x A}_{L^\infty} \les M \eps^{\frac 14} e^{-(\frac 12 + \delta)s}$, upon taking $\eps$ sufficiently small. This argument may be then iterated essentially because in the sum on the right side of \eqref{eq:A:multi:forcing}  we always have $n-k+1 \leq n-1$, so that only norms of $A$ that are already known to be small arise. Using Lemma~\ref{lem:transport} one may then show iteratively that 
\begin{align}
\norm{\p_x^n A(s)}_{L^\infty} \les M^{n-1} \eps^{\frac{1}{4}} e^{-(\frac 12 + \delta)s} \label{eq:Ax:sharp:bnd:n} 
\end{align}
for all $2\leq n \leq 4$. Taking $\eps$ sufficiently small, dependent on $M$, then \eqref{eq:Ax:sharp:bnd} and \eqref{eq:Ax:sharp:bnd:n} close the bootstrap assumptions on $\p_x^n A$ stated in \eqref{eq:bootstrap:3}.

\subsubsection{Bounds on the modulation variables $\tau$, $\kappa$, and $\xi$}

From \eqref{eq:tau:dot}, using the bounds \eqref{eq:L:infinity}, \eqref{eq:speed:bound}, \eqref{eq:Zx:bnd}, and \eqref{eq:Ax:sharp:bnd},  we obtain
\begin{align*}
\abs{\dot \tau }
&\les  e^{\frac s2} \norm{Z_x}_{L^{\infty}}+ e^{-\frac s2}  \norm{A}_{L^\infty} \left( \norm{Z_x}_{L^\infty} + e^{-\frac s2}\right)+ e^{-\frac s2} \norm{A_x}_{L^\infty} \left( \norm{Z}_{L^\infty} + \kappa_0\right) \notag \\
&\les  \eps^{\frac 14} e^{-\delta s} + M e^{- s}   \left( \eps^{\frac 14} e^{-\delta s} + 1 \right)+ \eps^{\frac 14} e^{-(1+\delta)s}   \left( M + \kappa_0\right) 
\end{align*}
The implicit constant is universal. 
Hence for   $s\geq -\log\eps$,  upon taking $\eps$ small to be sufficiently small solely in terms of $M$ and $\delta$, we obtain from the above that 
\begin{equation}
\abs{\dot \tau } \leq C \eps^{\frac 14} e^{-\delta s} \leq C \eps^{\frac 14 + \delta}   \leq \tfrac{1}{2} \eps^{\frac14}\,.
\label{eq:dot:tau:decay}
\end{equation}
Integrating in $t$ for $t\leq T_*$, and using that $\tau(-\eps) = 0$, we obtain
 \begin{equation*}
\abs{  \tau }\leq \tfrac{1}{2}\eps^{\frac54} \,,
\end{equation*}
proving the $\tau$ bounds in \eqref{eq:speed:bound}--\eqref{eq:acceleration:bound}. 

Aa consequence of \eqref{eq:dot:kappa}, \eqref{eq:L:infinity} and the bootstrap assumptions, by inspection we obtain
\begin{align*}
\abs{\dot \kappa} \leq 2 e^{\frac s2} \left( \abs{F_W^0(s)} + \abs{F_W^{0,(2)}(s)} \right)  \leq M^3 \,
\end{align*}
assuming that $M$ is taken to be sufficiently large (in terms of just universal constants). Integrating in $t$  from $-\eps$ to $T_*$,
 and assuming that $\eps$ is sufficiently small (in terms of $M$ and $\kappa_0$), yields
\begin{equation*}
\abs{\kappa (t)}\leq \tfrac{3}{2} \kappa_0\,.
\end{equation*}
This establishes the $\kappa$ bounds in \eqref{eq:speed:bound}--\eqref{eq:acceleration:bound}.

Similarly, from\eqref{eq:dot:xi}, \eqref{eq:L:infinity} and the bootstrap assumptions, by inspection we obtain
\begin{align*}
\abs{\dot \xi}\leq \abs{\kappa} + \abs{Z^0(s)} + e^{-\frac s2} \abs{F_W^{0,(2)}} \leq \tfrac{3}{2} (\kappa_0+M) \leq \tfrac{5}{2}M
\end{align*}
upon taking $\eps$ to be sufficiently small, in terms of $M$, and recalling cf.~Remark~\ref{rem:Linfinity} that $2 \kappa_0 \leq M$.  Integrating in $t$ from $-\eps$ to $T_*$, which obeys $\abs{T_*} \leq 2 \eps^{\sfrac 54}$,  and using that $\xi(-\eps) = 0$, we arrive at
\begin{equation*}
\abs{\xi (t)}\leq 5 M \eps \, ,
\end{equation*}
which proves the $\xi$ estimates in \eqref{eq:speed:bound}--\eqref{eq:acceleration:bound}.

\subsubsection{Estimates for $W$}

 \noindent
{\bf  The third derivative at $x=0$.}
Our first goal is to establish \eqref{eq:bootstrap:2}. The evolution of $\partial_x^3 W^0(s)$ is obtained by restricting \eqref{eq:multi:W} with $n=3$ to $x=0$, using the constraints \eqref{eq:constraints}, and the definition of $\dot \xi$ in \eqref{eq:dot:xi}. We obtain (noting that $\p_x^3 F_W$ also contains the term $\p_x^3 W$):
\begin{align}
&\left(\partial_s+4\left(1 - \frac{1}{1-\dot \tau}\right) + 3 \frac{e^{\frac s2}}{1-\dot\tau} \frac{1-\alpha}{1+\alpha}Z_x^0(s)  -  \frac{e^{-s}(3+2\alpha)}{(1+\alpha)(1-\dot \tau)}  A^0(s) \right)  W_{xxx}^0(s)
\notag\\
&\qquad 
= \frac{F_W^{0,(2)}(s)}{W_{xxx}^0(s)} W_{xxxx}^0(s) +  \frac{(1-\alpha)e^{\frac s2} Z_{xxx}^0(s)}{(1+\alpha)(1-\dot\tau)}-   \frac{e^{-\frac s2}(1-2\alpha) }{(1+\alpha)(1-\dot \tau)}  (A Z)_{xxx}^0(s) 
\notag\\
&\qquad\qquad
+ \frac{e^{-\frac s2}(3+2\alpha)}{(1+\alpha)(1-\dot \tau)}     \left(\kappa A_{xxx}^0(s) - 3 e^{- \frac s2} A_{xx}^0(s)   \right) \, .
\label{eq:Wxxx:0:evo}
\end{align}
We bound the terms of the above evolution  using \eqref{eq:L:infinity}, \eqref{eq:acceleration:bound}, \eqref{eq:bootstrap:1}, \eqref{eq:bootstrap:2}, \eqref{eq:bootstrap:2**}, and \eqref{eq:bootstrap:3}. After a calculation, we obtain that the right side of \eqref{eq:Wxxx:0:evo} is bounded by
\begin{align*}
&\les M \left( M^2 e^{-(\frac 12+\delta)s} + M e^{-(1+\delta)s} \right) +  M e^{-\delta s} +  M^2 e^{-(1+\delta)s} + e^{-\frac s2} \left( \kappa_0 M e^{-(\frac 12+ \delta)s} + M e^{-(1+\delta)s} \right) 
\notag\\
&\les  M e^{-\delta s} 
\end{align*}
where we have assumed $\eps$ to be sufficiently small such that the second term dominates all other terms. On the other hand, the damping term on the left side of \eqref{eq:Wxxx:0:evo} may be estimated in absolute value, upon appealing to the first inequality in \eqref{eq:dot:tau:decay}, by
\begin{align*}
\les \eps^{\frac 14} e^{-\delta s}+ M e^{-\delta s} + M e^{-s} \les M e^{-\delta s} 
\end{align*}
for $s\geq -\log \eps$. Therefore, by also appealing to the bootstrap assumption \eqref{eq:bootstrap:2}, we have proven that 
\begin{align*}
\abs{\partial_s W_{xxx}^0(s)} \les M e^{-\delta s} (\abs{W_{xxx}^0(s)} +1) \les M e^{-\delta s} \, .
\end{align*}
Recalling that $W_{xxx}^0(0) = 6$, and using the fundamental theorem of calculus in time, we obtain
\begin{align}
\abs{W_{xxx}^{0}(s) - 6} \les  M \int_{-\log \eps}^s e^{-\delta s'} ds' \les \frac{M}{\delta} \eps^{\delta}\leq   \eps^{\frac \delta 2}
 \label{eq:w:xxx:0:est}
\end{align}
upon taking $\eps$ to be sufficiently small, in terms of $M$ and $\delta$. Since $\eps<1$, we close the bootstrap \eqref{eq:bootstrap:2}.

 \vspace{.1 in}
\noindent
{\bf  The first derivative.}
We prove \eqref{eq:bootstrap:2*} in two steps, first for $|x|\leq \ell$ for some $\ell>0$ to be determined below (cf.~\eqref{eq:ell:def}), and then for $|x|\geq \ell$. 
Using a Taylor expansion around $x=0$ together with the constraints~\eqref{eq:constraints}, we obtain  
\begin{align*}
W_x(x,s)+1 - 3x^2 
&= x^2 \left(\frac{1}{2} W_{xxx}^0(s) - 3\right) + \frac{x^3}{6} W_{xxxx}(x',s) 
\end{align*}
for some $x'$ with $|x'| < |x|$. Using  \eqref{eq:w:xxx:0:est} 
 and \eqref{eq:bootstrap:1}  we arrive at 
\begin{align*}
\abs{W_x(x,s)+1 - 3x^2 } \leq  x^2 \left( \eps^{\frac{\delta}{2}}  + \frac{M |x|}{6}  \right) \leq  x^2 \left(  \eps^{\frac{\delta}{2}}   + \frac{M \ell}{6} \right) 
\end{align*}
for all $|x|\leq \ell$. Then, recalling \eqref{eq:bar:Wx:zero}, we see that the above estimate implies
\begin{align*}
\abs{W_x(x,s)- \bar W_x} \leq    x^2 \left(  \eps^{\frac{\delta}{2}}  + \frac{M \ell}{6} + 15 \ell^2 \right) \leq \frac{x^2}{40(1+x^2)}
\end{align*}
for all $\abs{x}\leq \ell$, as soon as we choose 
\begin{align}
 \ell 
 \leq \frac{1}{40M} \, ,  \label{eq:ell:def}
\end{align}
$M$ sufficiently large, and $\eps$ sufficiently small in terms of $M$ and $\delta$. Thus, we improve upon the bootstrap assumption \eqref{eq:bootstrap:2*} for $\abs{x}\leq \ell$, as desired.

It remains to establish  \eqref{eq:bootstrap:2*} for $\abs{x}\geq \ell$. For this purpose it is convenient to define
\[
\tilde W = W - \bar W\,,
\]
so that from \eqref{eq:Wx:eq} and the differentiated form of \eqref{eq:barW:dx}, $\tilde W_x$ is the solution of 
\begin{align}
&\left( \partial_s + 1 + \frac{\tilde W_x +  2 \bar W_x }{1-\dot \tau} + \frac{(1-\alpha) e^{\frac s2} Z_x}{(1+\alpha)(1-\dot\tau)} \right) \tilde W_x + \left( g_W + \frac{3x}{2} + \frac{W}{1-\dot \tau} \right) \tilde W_{xx} 
\notag\\
&\qquad =  \partial_x F_W   - \left( g_W + \frac{\tilde W +\dot\tau \bar W}{1-\dot \tau}   \right) \bar W_{xx} - \left( \frac{\dot \tau  \bar W_x}{1-\dot \tau}  +  \frac{(1-\alpha) e^{\frac s2} Z_x}{(1+\alpha) (1-\dot \tau)} \right)  \bar W_x \, .
\label{eq:tilde:Wx:evo}
\end{align}
Note that by \eqref{eq:constraints} and \eqref{eq:bar:Wx:zero}, we have $\tilde W(0,s) = \tilde W_x(0,s) = \tilde W_{xx}(0,s)= 0$.
Next, we define 
\begin{align*}
 V(x,s) = \frac{\tilde W_x (1+x^2)}{x^2}
\end{align*}
so that establishing \eqref{eq:bootstrap:2*} is equivalent  to proving  that $\abs{V} \leq \frac{1}{20}$ for all $s> -\log \eps$  and all $\abs{x}\geq \ell$.
 It is important here that we are avoiding  $x=0$ (since we concerned with $\abs{x} > \ell$), in view of the division by $x^2$. 
It follows from \eqref{eq:tilde:Wx:evo} and a short computation that
\begin{align}
&\partial_s V + \left(1 + \frac{\tilde W_x +  2 \bar W_x }{1-\dot \tau} + \frac{(1-\alpha) e^{\frac s2} Z_x}{(1+\alpha)(1-\dot\tau)} +\frac{2}{x(1+x^2)} \left( g_W + \frac{3x}{2} + \frac{W}{1-\dot \tau} \right)  \right) V \notag\\
&\qquad   + \left( g_W + \frac{3x}{2} + \frac{W}{1-\dot \tau} \right) V_{x} 
\notag\\
&  = \frac{(1+x^2)\partial_x F_W}{x^2}   - \left( g_W + \frac{\dot\tau \bar W}{1-\dot \tau}   \right) \frac{(1+x^2)\bar W_{xx}}{x^2} - \left( \frac{\dot \tau  \bar W_x}{1-\dot \tau}  +  \frac{(1-\alpha) e^{\frac s2} Z_x}{(1+\alpha) (1-\dot \tau)} \right) \frac{(1+x^2) \bar W_x }{x^2} \notag\\
&\qquad - \frac{1}{1-\dot \tau} \frac{(1+x^2)\bar W_{xx}}{x^2} \int_0^x V(x') \frac{(x')^2}{1+(x')^2} dx' \, .
\label{eq:V:evo}
\end{align}
The evolution equation for $V$ takes the form of a damped and non-locally forced transport equation, of the general form given in \eqref{eq:nonlocal:transport} below.  Our goal is to apply Lemma~\ref{lem:max:princ} to \eqref{eq:V:evo}. 

The main observation which allows us to bound the solution $V$ of \eqref{eq:V:evo} is that the {\em explicit formula} for $\bar W$ in \eqref{eq:barW:def} implies the lower bound
\begin{align}
 1 + 2 \bar W_x    +\frac{2}{x(1+x^2)} \left( \frac{3x}{2} + \bar W \right)   \geq \frac{6 x^2}{1+ 8 x^2}
 \label{eq:V:main:damping}
\end{align}
for all $x\in \RR$. Since we are analyzing $\abs{x}\geq \ell$, the above estimate yields a strictly positive  damping term in the $V$ equation. In order to see this, let us estimate 
the remaining terms in the damping factor for $V$ on the left side of \eqref{eq:V:evo}. We claim that for all $\abs{x}\geq \ell$,  we have that
\begin{align}
\abs{\frac{\tilde W_x +  2 \dot \tau \bar W_x }{1-\dot \tau} + \frac{(1-\alpha) e^{\frac s2} Z_x}{(1+\alpha)(1-\dot\tau)} +\frac{2}{x(1+x^2)} \left( g_W + \tilde W+ \frac{\dot \tau W}{1-\dot \tau} \right)  } \leq  \frac{5 x^2}{4(1+8 x^2)} +  \eps^{\frac{\delta}{2}} 
\, .
\label{eq:V:noise:damping}
\end{align}
Indeed, using the $\dot \tau$ estimate~\eqref{eq:acceleration:bound}, the fact that $\abs{\bar W_x} \leq 1$, and the bootstrap assumptions, we deduce that  
\begin{align}
\abs{\frac{\tilde W_x +  2 \dot \tau \bar W_x }{1-\dot \tau} + \frac{(1-\alpha) e^{\frac s2} Z_x}{(1+\alpha)(1-\dot\tau)} + \frac{2\tilde W}{x(1+x^2)}} 
&\leq  (1+ 2\eps^{\frac 14}) \left( \frac{3 x^2}{20(1+x^2)} + 2 \eps^{\frac 14} + M \eps^\delta \right)  \notag\\
&\leq   \frac{5 x^2}{4(1+8 x^2)} +  2M\eps^{\delta} \label{e:Fultz}
\end{align}
since $\eps$ is sufficiently small.
Here we have used that $\tilde W(0,s) =0$, and thus that 
$$
\abs{ \frac{2 \tilde W(x,s)}{x(1+x^2)}} \leq \frac{2}{\abs{x}(1+x^2)} \int_0^{\abs{x}} \abs{\tilde W_x(x',s)} dx' \leq \frac{1}{10 \abs{x}(1+x^2)} \int_0^{\abs{x}} \frac{(x')^2}{1+(x')^2} dx' \leq \frac{x^2}{10(1+x^2)} \, .
$$
Similarly, using the constraint \eqref{eq:constraints} and the bound \eqref{eq:bootstrap:2**},  we may directly estimate
\begin{align}
\frac{2 \abs{\dot \tau W(x,s)}}{x (1+x^2) (1-\dot \tau)} 
\leq   \frac{4 \eps^{\frac 14}}{x}\int_0^x \abs{W_x(x',s)} dx' \leq 4   \eps^{\frac 14}\,.\label{e:NY_Nicks}
\end{align}
Recall the identities   \eqref{eq:gW:est} and \eqref{e:ForcingW}. Note that by  \eqref{eq:bootstrap:3} we have $\abs{Z(x,s) - Z^0(s)} \leq M |x| e^{-(\frac 12 + \delta)s}$. Then, by appealing to  \eqref{eq:L:infinity}, \eqref{eq:bootstrap:3} and the constraints \eqref{eq:constraints},   we may deduce that
\begin{align}
\abs{g_W(x,s)} 
&\leq   \frac{(1-\alpha) e^{\frac s2}}{(1+\alpha)(1-\dot \tau)} \abs{Z(x,s) - Z^0(s)} +  \frac{\abs{F_W^{0,(2)}}}{W_{xxx}^0(s)} 
\notag\\
&\les \abs{x} M e^{-\delta s} + \norm{\partial_{xx} F^0_W}_{L^{\infty}}+e^{\frac s2}\norm{Z^0_{xx}}_{L^{\infty}} \notag\\
&\les \abs{x} M e^{-\delta s} + M^2 e^{-s}+Me^{-\delta s} \notag\\
&\les  \ell^{-1}M \abs{x} e^{-\delta s}
\label{eq:gW:Lip}
\end{align}
for any $\ell \leq \abs{x}$. Choosing $\eps$ sufficiently small in terms of $\delta$ and $M$, and  combining  \eqref{e:Fultz}--\eqref{eq:gW:Lip} yields the proof of \eqref{eq:V:noise:damping}. In turn, combining \eqref{eq:V:main:damping} and \eqref{eq:V:noise:damping} we obtain that the total damping term in \eqref{eq:V:evo} may be bounded from below as
\begin{align}
 1 + \frac{\tilde W_x +  2 \bar W_x }{1-\dot \tau} + \frac{(1-\alpha) e^{\frac s2} Z_x}{(1+\alpha)(1-\dot\tau)} +\frac{2}{x(1+x^2)} \left( g_W + \frac{3x}{2} + \frac{W}{1-\dot \tau} \right)  \geq \frac{9 x^2}{2( 1+ 8 x^2 )}
 \label{eq:awesome}
\end{align}
pointwise for all $\abs{x}\geq \ell$.  Here we have implicitly used that $\eps^{\frac{\delta}{2}} \leq \frac{\ell^2}{16} \leq \frac{x^2}{4(1+ 8 x^2)}$ for $\abs{x} \geq \ell$
since by \eqref{eq:ell:def}, $\ell$ is small enough when $M$ is large. From \eqref{eq:awesome} and the fact that the function $\frac{9 x^2}{2(1+ 8 x^2)}$ is monotone increasing in $\abs{x}$, we obtain that the damping term in \eqref{eq:V:evo} is bounded from below by $\lambda_D := \frac{9 \ell^2}{2(1+ 8 \ell^2)}$ for all $\abs{x}\geq \ell$, as required by \eqref{eq:max:princ:damping}.

Our next observation concerns the last term on the right side of \eqref{eq:V:evo}, which is nonlocal in $V$.  We may write this term as the integral of $V(x',s)$ against the kernel
\begin{align*}
{\mathcal K}(x,x',s) = - \frac{1}{1-\dot \tau} \frac{(1+x^2)\bar W_{xx}(x)}{x^2}   {\bf 1}_{[0,x]}(x')  \frac{(x')^2}{1+(x')^2} 
\, .
\end{align*}
Since we know $\bar W_{xx}$ exactly, we may show that pointwise in $x$ and $s$ we have the bound 
\begin{align}
\int_{\RR} \abs{\mathcal K(x,x',s)} dx' \leq  \frac{\abs{\bar W_{xx}}(1+x^2)}{(1-\dot \tau)x^2} \int_0^{\abs{x}}   \frac{(x')^2}{1+(x')^2} dx' \leq  \frac{3 (1+ 2\eps^{\sfrac 14} ) x^2}{1+8 x^2} \, .
\label{eq:also:awesome}
\end{align}
In view of \eqref{eq:awesome}, \eqref{eq:also:awesome}, and the bound $3 (1+  2\eps^{\sfrac 14}) \leq \sfrac 92 \cdot \sfrac 34$, which holds since $\eps$ is sufficiently small, the kernel ${\mathcal K}$ obeys the assumption \eqref{eq:max:princ:kernel} of Lemma~\ref{lem:max:princ}.

Next, we estimate the forcing term in \eqref{eq:V:evo} for $\abs{x} \geq \ell$ in order to identify the constant $\mathcal F_0$ from Lemma~\ref{lem:max:princ}. Indeed, using the explicit properties of $\bar W$, the first line on the right side of \eqref{eq:V:evo} is bounded from above by
\begin{align*}
&\norm{\frac{1+x^2}{x^2}\p_x F_W}_{L^\infty(\abs{x}\geq \ell)} 
 + \left( \norm{\frac{g_W}{x}}_{L^\infty(\abs{x}\geq \ell)} + 2 \abs{\dot \tau} \norm{\frac{ \bar W}{x}}_{L^\infty(\abs{x}\geq \ell)}   \right) \norm{\frac{(1+x^2)\bar W_{xx}}{x}}_{L^\infty(\abs{x}\geq \ell)}
\notag\\
&\quad + 2 \left(\abs{\dot \tau}  \norm{\bar W_x}_{L^\infty}  +   e^{\frac s2} \norm{Z_x}_{L^\infty} \right) \norm{\frac{(1+x^2) \bar W_x}{x^2}}_{\abs{x}\geq \ell} \\
& \les \ell^{-2} \norm{ \p_x F_W}_{L^\infty(\abs{x}\geq \ell)} +\norm{\frac{g_W}{x}}_{L^\infty(\abs{x}\geq \ell)} + \abs{\dot\tau}+ \ell^{-2}(\abs{\dot\tau}+ e^{\frac s2} \norm{Z_x}_{L^\infty})\\
&  \les \ell^{-2} M^2 e^{-s} + \ell^{-1}M e^{-\delta s}+\eps^{\frac14}+ \ell^{-2}(\eps^{\frac14}+ Me^{-\delta s} )\\
&  \les \ell^{-2}M\eps^{\delta} 
\end{align*}
where we have employed \eqref{eq:L:infinity}, \eqref{eq:acceleration:bound} \eqref{eq:bootstrap:3}, \eqref{eq:gW:Lip}, and assumed $\eps$ to be sufficiently small, dependent on $M$.  Therefore, taking $\eps$ smaller if need be, the estimate on the force required by \eqref{eq:max:princ:force}  in Lemma~\ref{lem:max:princ} holds, with ${\mathcal F}_0 =  \eps^{\frac{\delta}{2}} $. 
 
Lastly, we verify the bounds \eqref{eq:max:princ:ass}.   We already know that for $\abs{x}\leq \ell$, and for $s\geq -\log \eps$, we have the inequality $\abs{V(x,s)}\leq \sfrac{1}{40}$. Moreover, in view of the assumption \eqref{eq:big:assume:ab}, at the initial time $s=  - \log \eps$ we have that $x \eps^{\frac 32} = \theta$ and thus
\begin{align*}
\abs{V(x,-\log\eps)} 
= \frac{1+x^2}{x^2} \abs{W_x(x,-\log \eps) -\bar W_x(x)} 
=  \frac{\eps^3+\theta^2}{\theta^2} \abs{\eps (\partial_\theta w_0)(\theta)  -\bar W_x\left(\frac{\theta}{\eps^{\frac 32}}\right)} \leq \frac{1}{40} \, .
\end{align*} 
Thus, \eqref{eq:max:princ:ass} holds with $m= \sfrac{1}{20}$.

In order to apply Lemma~\ref{lem:max:princ} we finally need to verify the condition \eqref{eq:max:princ:cond}. In view of our determined values for $\lambda_D, {\mathcal F}_0$ and $m$, we have
\[
m \lambda_D = \frac{1}{20} \frac{9 \ell^2}{2(1+ 8 \ell^2)}   \geq    4 \eps^{\frac{\delta}{2}}  = 4 {\mathcal F_0} 
\]
once $\eps$ is chosen to be sufficiently small, in terms of $\ell \leq 1$ (and thus of $M$). Also, note that by Remark~\ref{rem:support} we have that $W_x$ is compactly supported, while from \eqref{eq:bar:Wx:infinity} we have that $\bar W_x$ decays as $\abs{x}\to \infty$. Therefore, we have $\abs{V(x,\cdot)} \to 0$ as $\abs{x}\to \infty$. We may thus apply Lemma~\ref{lem:max:princ} and conclude from \eqref{eq:max:princ} that 
\[
\norm{V(\cdot,s)}_{L^\infty(\RR)} \leq \frac{3}{80} 
\]
which proves the bootstrap assumption \eqref{eq:bootstrap:2*}.

\vspace{.1 in}
\noindent
{\bf  The second derivative.}
We note that from \eqref{eq:bootstrap:1}, the constraint $W_{xx}(0,s) = 0$ in \eqref{eq:constraints}, and the bound \eqref{eq:w:xxx:0:est}, we obtain that 
\begin{align}
\abs{W_{xx}(x,s)} 
&\leq \abs{x} W_{xxx}(0,s) + \frac{x^2}{2} \norm{\p_x^4 W}_{L^\infty} 
\notag\\
& \leq (6 + \eps^{\frac{\delta}{2}} )  \abs{x} + \frac{M}{2} x^2 \leq \frac{7\abs{x}}{(1+x^2)^{\frac 12}}\, , \qquad \mbox{for all} \qquad \abs{x} \leq \frac{1}{M}\, ,
\label{e:initial_W_xx_est_loc}
\end{align}
and all $s\geq -\log \eps$. Here we have assumed that $M$ is sufficiently large.  This shows that \eqref{eq:bootstrap:2@} automatically holds for  $\abs{x}\leq \sfrac{1}{M}$, with an even better constant.

Next, we observe that \eqref{eq:w0:bnd} implies $\norm{\p_x^2W(\cdot,-\log\eps)}_{L^\infty} \leq 1$ and $\norm{\p_x^4 W(\cdot,-\log \eps)}\leq 1$. Using \eqref{eq:w0:power:series}, and a Taylor expansion,  together with the uniform bound \eqref{eq:w0:bnd}, we conclude that 
\begin{align}\label{e:initial_W_xx_est}
 \norm{W_{xx}(x,-\log\eps)} \leq \min \left\{6 \abs{x} + \frac{x^2}{2}, 1\right\}   \leq \frac{7 \abs{x}}{(1+x^2)^{1/2}} 
\end{align}
for all $x\in \RR$. 

Similarly to the above subsection, in order to prove \eqref{eq:bootstrap:2@} for $\abs{x}$ large, we introduce a new variable which is a weighted version of $W_{xx}$; we define
\begin{align}
\tilde V(x,s) = \frac{(1+x^2)^{\frac 12} W_{xx}(x,s)}{x} \, .
\label{eq:tilde:V:def}
\end{align}
From \eqref{eq:Wxx:eq}, we see that $\tilde V(x,s)$ is a solution of
\begin{align}
&\partial_s \tilde V +\left( \frac52 +\frac{3W_x}{1-\dot \tau} +\frac{2(1-\alpha) e^{\frac s2}Z_x }{(1+\alpha)(1-\dot \tau)} + \frac{1}{x(1+x^2)} \left(g_W+\frac{3x}{2}+\frac{W}{1-\dot\tau}\right) - \frac{(3+2\alpha)e^{-s} A}{(1+\alpha)(1-\dot \tau)} \right)\tilde V \notag\\
&\qquad \qquad + \left(g_W+\frac{3x}{2} +\frac{W}{1-\dot\tau}\right) \p_x \tilde V \notag\\
&= - \frac{e^{-\frac s2}(1+x^2)^{\frac 12}}{x (1+\alpha)(1-\dot\tau)} \left( (1-2\alpha) (AZ)_{xx} - (3+2\alpha) \left( A_{xx}( e^{-\frac s2} W+\kappa) + 2 e^{-\frac s2} A_x W_x \right) \right) \notag\\
&\qquad \qquad - \frac{(1-\alpha) e^{\frac s2} (1+x^2)^{\frac 12} Z_{xx} W_x}{x(1+\alpha)(1-\dot \tau)} 
\, .
\label{eq:tilde:V:evo}
\end{align}
Here we have used that $\p_x^2 F_W$ contains a term with a factor of  $W_{xx}$; the corresponding weighted term has been grouped with the other damping terms on the left of \eqref{eq:tilde:V:evo}. The idea is simple: the damping term in \eqref{eq:tilde:V:evo} is larger than the forcing term, for all $\abs{x}\geq \sfrac{1}{M}$, once $\eps$ is chosen sufficiently small. 

In order to make this precise, we first estimate the damping term from below. The main observation is that for the exact self-similar profile $\bar W$, we have 
\begin{align}
\frac52 + 3 \bar W_x   + \frac{1}{x(1+x^2)} \left( \frac{3x}{2}+\bar W \right) \geq \frac{x^2}{ 1+ x^2}
 \label{eq:Shaq}
\end{align}
for all $x\in \RR$. This bound is similar to \eqref{eq:V:main:damping}, and it holds because we know $\bar W$ precisely. Using the estimates \eqref{eq:L:infinity}, \eqref{eq:acceleration:bound}, \eqref{eq:bootstrap:2**}, \eqref{eq:bootstrap:3}, \eqref{eq:gW:Lip} and \eqref{eq:Shaq}, we thus may bound from below
\begin{align}
&\frac52 +\frac{3W_x}{1-\dot \tau} +\frac{2(1-\alpha) e^{\frac s2}Z_x }{(1+\alpha)(1-\dot \tau)} + \frac{1}{1+x^2} \left(\frac{g_W}{x}+\frac{3}{2}+\frac{W}{x(1-\dot\tau)}\right) - \frac{(3+2\alpha)e^{-s} A}{(1+\alpha)(1-\dot \tau)} 
\notag\\
&\geq  \frac{x^2}{1+ x^2} - (3+6\eps^{\frac14})\abs{\tilde W_x}  - C M e^{-\delta s} - \frac{1}{1+x^2} \left( C\ell^{-1}M  e^{-\delta s}+ \abs{\frac{\tilde W}{x}}\right) -  C M e^{-s} 
\label{eq:tilde:V:damping*}
\end{align}
where $C>0$ only depends on $\alpha$.
Using \eqref{eq:bootstrap:2*} and the fundamental theorem of calculus, we have
\begin{align*}
(3+6\eps^{\frac14})\abs{\tilde W_x}+\frac{1}{1+x^2} \abs{\frac{\tilde W}{x}} &\leq \frac{3x^2}{20(1+x^2)}+ \eps^{\frac14}+\frac{1}{\abs{x}(1+x^2)}\abs{ \int_0^x \frac{y^2}{20(1+y^2)}dy}\\
&\leq \frac{x^2}{5(1+x^2)}+ \eps^{\frac14}
\end{align*}
where we used that $\abs{\frac{1+x^2}{x^3} \int_0^x \frac{y^2 dy}{1+y^2} } \leq 1$ for all $x\in \RR$.  Taking $\eps$ sufficiently small, depending on $M, \alpha, \delta$, we may thus bound the right hand side of \eqref{eq:tilde:V:damping*}, and thus the total damping terms on the left side of \eqref{eq:tilde:V:evo}, from below by
\begin{align}
\geq  \frac{4 x^2}{5(1+ x^2)}  - \eps^{\frac{\delta}{2}}  \geq \frac{1}{2 M^{2}}\, \qquad \mbox{for all} \qquad \abs{x}\geq \frac 1M \, ,
\label{eq:tilde:V:damping}
\end{align}
upon taking $\eps$ to be small enough in terms of $\delta$ and $ M$.

Similarly, for $\abs{x}\geq  \sfrac 1M$ the forcing term on the right hand side of \eqref{eq:tilde:V:evo} may be bounded by
\begin{align}
&\les \ e^{-\frac s2}\frac{(1+x^2)^{\frac 12}}{\abs{x}} \left( \abs{(AZ)_{xx}}+ \left(\abs{ A_{xx}}( e^{-\frac s2} \abs{W}+\kappa) +  e^{-\frac s2} \abs{A_x W_x} \right) \right)
+ \frac{e^{\frac s2} (1+x^2)^{\frac 12} \abs{Z_{xx} W_x}}{\abs{x}} \notag\\
&\les  (M^2e^{-s}+Me^{-\delta s} )\frac{(1+x^2)^{\frac 12}}{\abs{x}} 
\les M^2 e^{-\delta s} 
 \label{eq:tilde:V:forcing}
\end{align}
where we assumed $\eps$ to be sufficiently small dependent on $M$.

To close the bootstrap, we wish to apply Lemma~\ref{lem:max:princ} (with ${\mathcal K} \equiv  0$) to the evolution equation \eqref{eq:tilde:V:evo}. Using \eqref{e:initial_W_xx_est_loc} and \eqref{e:initial_W_xx_est}, the condition \eqref{eq:max:princ:ass} is satisfied with $m=14$ and $\Omega =\{x~:\abs{x}\leq \sfrac{1}{M}\}$. From \eqref{eq:tilde:V:forcing} we verify that \eqref{eq:max:princ:force} holds with ${\mathcal F}_0 = \eps^{\frac{\delta}{2}}$, after talking $\eps$ to be small enough to absorb the implicit constant and the $M^2$ factor. Owing to \eqref{eq:tilde:V:damping}, the condition \eqref{eq:max:princ:cond} then amounts to checking
\[14 \frac{1}{2M^{2}} \geq  e^{-\frac{\delta}{2}}  \]
which is easily seen to be satisfied by taking $\eps$ to be sufficiently small, dependent on $M$. Applying Lemma~\ref{lem:max:princ} we obtain
\[\norm{\tilde V}_{L^{\infty}}\leq \frac{21}{2} < 12\]
which closes the bootstrap \eqref{eq:bootstrap:2@} upon recalling the definition of $\tilde V$ in \eqref{eq:tilde:V:def}.

\vspace{.1 in}
\noindent
{\bf The fourth derivative.}
The evolution of the fourth derivative of $W$ is governed by \eqref{eq:multi:W} with $n=4$. The damping term in this equation may be bounded from below as
\begin{align}
 \frac{11}{2} + \frac{5}{1-\dot \tau} \partial_x W+ 4 \partial_x g_W
 &\geq \frac{11}{2} - 5 (1+2 \eps^{\frac 14})\left(1 + 4 e^{\frac s2} \norm{\p_x^4 Z}_{L^\infty} \right) \notag\\
 &\geq \frac{11}{2} - 5 (1+2 \eps^{\frac 14})\left(1 + 4 M \eps^{\delta s}  \right) \geq \frac{1}{4}
 \label{eq:Wxxxx:damping}
\end{align}
where we have used that $\abs{W_x} \leq 1$,  $\eps$ is sufficiently small, and \eqref{eq:bootstrap:3} holds.  On the other hand, the forcing term $F_W^{(4)}$ may be estimated using \eqref{eq:acceleration:bound}, \eqref{eq:bootstrap:-1}, \eqref{eq:bootstrap:1},  and \eqref{eq:bootstrap:2**}--\eqref{eq:bootstrap:3} as
\begin{align}
\norm{F_W^{(4)}}_{L^\infty} 
&\les   e^{-\frac s2} \norm{AZ}_{\dot C^4} +  e^{-\frac s2} \norm{A(e^{-\frac s2} W + \kappa)}_{\dot C^4} +  \norm{W}_{\dot C^3} \norm{W}_{\dot C^2} + \sum_{k=1}^3 \norm{W}_{\dot C^k} e^{\frac s2}\norm{Z }_{\dot C^{5-k}}   \notag\\
&\les M^2  e^{-s} +  M^{\frac 34}  + M^{\frac 74} e^{-\delta s} \les  M^{\frac 34} 
 \label{eq:Wxxxx:forcing}
\end{align}
assuming $\eps$ to be sufficiently small, dependent on $M$. Appealing to Lemma~\ref{lem:transport}, estimate \eqref{eq:lem:transport:1}, with $\lambda_F = 0$, $\lambda_D = 1/4$, and ${\mathcal F}_0 = C  M^{\frac 34}$ where $C$ is the (universal) implicit constant in \eqref{eq:Wxxxx:forcing}, we arrive at
\begin{align}
\norm{\p_x^4 W(\cdot,s)}_{L^\infty} 
\leq  \norm{\p_x^4 W(\cdot,-\log \eps)}_{L^\infty} e^{-\frac 14(s+\log \eps)} + 4 C M^{\frac 34}
\leq  1 + 4 C M^{\frac 34} \leq \frac{M}{2}  
\label{eq:wxxxx_bound00}
\end{align}
for any $s\geq -\log\eps$.
In the second inequality above,  we have used the initial datum assumption \eqref{eq:w0:bnd} on the fourth derivative of the initial datum, while in the third inequality we have used that $M$ is sufficiently large, in terms of the universal constant $C$. This estimate proves the fourth derivative bound in \eqref{eq:bootstrap:1}.

\vspace{.1 in}
\noindent
{\bf  Global bound for the third derivative.}
Using  the mean value theorem and the bound \eqref{eq:wxxxx_bound00}  we  have
\begin{align*}
\abs{W_{xxx}(x,s) - W_{xxx}(0,s)} \leq \abs{x} M  
\end{align*}
which may be combined with \eqref{eq:w:xxx:0:est} to arrive at
\begin{align}
 \abs{W_{xxx}(x,s)} \leq 6 + \eps^{\frac{\delta}{2}}+ \abs{x} M  \leq \frac{M^{\frac34}}{2} \qquad \mbox{for} \qquad \abs{x} \leq \frac{1}{ 4M^{\frac14}} 
\label{eq:w:xxx:local}
\end{align}
and all $s\geq -\log\eps$, assuming $M$ is sufficiently large. At the initial time, in view of \eqref{eq:w0:bnd}, the estimate 
\begin{align}
 \abs{W_{xxx}(x,- \log \eps)}  \leq 7 \leq \frac{M^{\frac34}}{2}
\label{eq:w:xxx:datum}
\end{align}
holds for all $x\in \RR$. We next claim that 
\begin{align}
\abs{W_{xxx}(x,s)}  \leq \frac{3M^{\frac34}}{4} 
\label{eq:w:xxx:global}
\end{align}
holds for all $s> -\log \eps$ and all $\abs{x}\geq 1/(4 M^{\sfrac 14})$. The estimate \eqref{eq:w:xxx:global} would then immediately imply the bootstrap assumption for the third derivative in \eqref{eq:bootstrap:1}.
The proof of \eqref{eq:w:xxx:global} is  based on Lemma \ref{lem:max:princ} (with ${\mathcal K}\equiv 0$), and a lower bound on the damping term for the $\p_x^3 W$ evolution.

We recall from~\eqref{eq:multi:W} with $n=3$, and carefully computing the forcing term $F_W^{(3)}$,  that 
\begin{align}
&\left(\partial_s + 4 \left(1+ \p_x W\right) + \frac{\dot \tau W_x}{1-\dot\tau}  + \frac{4 (1-\alpha)e^{\frac s2}Z_x  }{(1+\alpha)(1-\dot \tau)} - \frac{(3+2\alpha) e^{-s} A}{(1+\alpha)(1-\dot \tau)} \right) \p_x^3 W + \left( g_W + \frac{3x}{2} + \frac{W}{1-\dot\tau}\right) \p_x^4 W
\notag\\
& = \frac{e^{-\frac s2}}{(1+\alpha)(1-\dot \tau)} \left( (3+2\alpha) \left( \p_x^3 A (e^{-\frac s2} W + \kappa) + 3e^{-\frac s2} \p_x^2 A \p_x W + 3 e^{-\frac s2} \p_x  A \p_x^2 W \right) - (1-2\alpha) \p_x^3(AZ) \right)\notag\\
&\qquad - \frac{(1-\alpha)e^{\frac s2}}{(1+\alpha) (1-\dot \tau)} \left( \p_x^3 Z \p_x W +3 \p_x^2 Z \p_x^2 W\right) - \frac{3}{1-\dot\tau} \left( \p_x^2 W \right)^2 
\label{eq:W:xxx:evo}
\end{align}
holds.  In order to prove \eqref{eq:w:xxx:global}, we first estimate the right side of \eqref{eq:W:xxx:evo}. From \eqref{eq:L:infinity}, \eqref{eq:acceleration:bound}, \eqref{eq:bootstrap:1}, \eqref{eq:bootstrap:2**}, and \eqref{eq:bootstrap:3}, we may directly estimate the error term on the right side of \eqref{eq:W:xxx:evo} in absolute value by  
\begin{align}
 \les  M^2 e^{-\delta s} + 1  
\label{eq:W:xxx:forcing}
\end{align}
assuming M is sufficiently large, and $\eps$ is sufficiently small, dependent on $M$ and $\delta$. Returning to the damping term in the evolution for $\p_x^3 W$,  for any $x$  and any $s\geq -\log \eps$, we have that
\begin{align*}
&4 \left(1+ \p_x W\right) + \frac{\dot \tau W_x}{1-\dot\tau}  + \frac{4 (1-\alpha)e^{\frac s2}Z_x  }{(1+\alpha)(1-\dot \tau)} - \frac{(3+2\alpha) e^{-s} A}{(1+\alpha)(1-\dot \tau)}
\notag\\
&\quad \geq  4 \left(1+ \bar W_x  - \frac{x^2}{20(1+x^2)} \right) - 2 \eps^{\frac 14} - 8 M \eps^{\delta} - 6 M \eps
 \geq    \frac{ x^2}{1+x^2}   -  \eps^{\frac{\delta}{2}}  \, . 
\end{align*}
Above we have appealed to \eqref{eq:L:infinity}, \eqref{eq:acceleration:bound}, \eqref{eq:bootstrap:2*}, \eqref{eq:bootstrap:2**}, and \eqref{eq:bootstrap:3}, and have taken $\eps$ to be sufficiently small, in terms of $M$ and $\delta$. In the second inequality above we have also appealed to the pointwise estimate $1+ \bar W_x - \frac{3 x^2}{4(1+x^2)} \geq 0$ holds for all $x\in \RR$. Now, for  $\abs{x}>1/(4M^{\frac14})$ we obtain that 
\begin{align}
4 \left(1+ \p_x W\right) + \frac{\dot \tau W_x}{1-\dot\tau}  + \frac{4 (1-\alpha)e^{\frac s2}Z_x  }{(1+\alpha)(1-\dot \tau)} - \frac{(3+2\alpha) e^{-s} A}{(1+\alpha)(1-\dot \tau)}\geq \frac{1}{1+16  M^{\frac 12} } -  \eps^{\frac{\delta}{2}} \geq   \frac{1}{32 M^{\frac 12}} 
\,,
\label{eq:W:xxx:dissipation}
\end{align}
upon taking $\eps$ sufficiently small, solely in terms of $M$ and $\delta$.

We return to \eqref{eq:W:xxx:evo} with the information \eqref{eq:W:xxx:forcing} and \eqref{eq:W:xxx:dissipation} in hand. In view of \eqref{eq:w:xxx:datum}, we know that at the initial time and on the compact set $\Omega = \{ x\colon \abs{x} \leq 1/(4M^{\frac 14})\}$, the inequality \eqref{eq:w:xxx:global} holds, with the constant $3/4$ being replaced by the constant $1/2$, i.e.\ condition \eqref{eq:max:princ:ass} is satisfied with $m=M^{\frac34}$. 
Moreover, from \eqref{eq:W:xxx:forcing} and \eqref{eq:W:xxx:dissipation}, condition \eqref{eq:max:princ:cond} amounts to checking
\[
M^{\frac 34} \frac{1}{32 M^{\frac 12}} \geq 4 (C M^2 \eps^\delta + 1)
\]
where $C$ is the implicit constant in \eqref{eq:W:xxx:forcing}. This condition is true so long as $M$ is sufficiently large and $\eps$ is chosen sufficiently small, dependent on $M$. Hence we may apply Lemma~\ref{lem:max:princ} to deduce that \eqref{eq:w:xxx:global} holds for all $s\geq -\log\eps$.

\subsection{Proof of Theorem~\ref{eq:thm:bnd:1}}
In this section we show that the already established bootstrap bounds \eqref{eq:speed:bound}--\eqref{eq:bootstrap:3}, together with a number of {\em a-posteriori estimates} give the proof of Theorem~\ref{eq:thm:bnd:1}. First, we note that from \eqref{eq:x:s:def}--\eqref{eq:ss:ansatz}, the definition of $T_*$ in Remark~\ref{rem:blowup:details}, and  \eqref{eq:speed:bound}--\eqref{eq:bootstrap:3}, we obtain that the solutions $(w,z,a)$ remain  $C^4$ smooth at all times prior to $T_*$. Second, we remark that \eqref{eq:constraints} implies $\p_\theta w(\xi(t),t) = e^{s} W_x(0,s) = - e^{s}$, while \eqref{eq:bootstrap:2**} yields $\norm{\p_\theta w(\cdot,t)}_{L^\infty} \leq e^{s}$. These bounds prove the claimed blowup behavior of $\p_\theta w$ as $t\to T_*$, upon recalling that $e^{s}$ and $\sfrac{1}{(T_*-t)}$ only differ by a factor $\leq 2$. Third, we notice that the claimed $\eps$ dependent bounds on $T_*$ and $\theta_*$ were established in Remark~\ref{rem:blowup:details}, while Remark~\ref{rem:Linfinity} (see also estimate~\eqref{eq:improved:L:infty} below) give the claimed amplitude bounds for $(w,z,a)$.  

It remains for us to prove  that $\norm{\p_\theta a(\cdot,t)}_{L^\infty}$, $[w(\cdot,t)]_{C^{\sfrac 13}}$, and $\norm{\p_\theta z(\cdot,t)}_{L^\infty}$ remain uniformly bounded on 
$[-\eps,T_*)$, that the claimed upper and lower bounds for the vorticity hold, and that the lower bound for the density also holds. In Proposition~\ref{prop:ballistic} below, we prove the 
desired vorticity, density and $\p_\theta a$ bounds. The uniform-in-time H\"older $C^ {\sfrac{1}{3}} $ bound is more delicate and it does not directly follow from the proven bootstrap estimates.  Rather, to establish this $C^ {\sfrac{1}{3}} $ bound,
we use the second estimate on the right side of  \eqref{eq:W:x:asymptotic} and prove that it can be propagated forward in time, in self-similar variables. This is achieved 
in Section~\ref{rem:Holder:13}. As explained in Remark~\ref{rem:Holder:13} below, these improved bounds on the   blowup profile $W$ as $\abs{x}\to \infty$, imply the desired 
H\"older estimate.  Using this information, we  prove in Section~\ref{sec:z:prime} that the distance between the  Lagrangian flow of the transport velocity in the $z$ equation 
and $\xi(t)$ remains
too  large  as $t\to T_*$ for a blowup to occur;  namely, this distance is  $\OO(T_*-t)$ instead of $\OO((T_*-t)^{\sfrac 32})$, which in turn implies that $\p_\theta z$ remains 
uniformly bounded all the way up  to the blowup  time $T_*$. 

Finally, once these a posteriori estimates for $(w,z,a)$ as well as for $\upomega$ and $P$ are established, the estimates for solutions $(u_r,u_\theta,\rho)$  of the Euler equations
\eqref{eq:Euler:polar} immediately follow from the 
 the definition of the Riemann variables \eqref{eq:riemann} together with our homogeneity assumption  \eqref{scale0} on the solutions. We note that the blowup  segment $\Gamma(T_*)$ is the natural extension of the blowup point $\theta_*$ in the radial direction.

\subsubsection{Density, vorticity, and $\p_\theta a$ bounds}
\begin{proposition} 
 \label{prop:ballistic}
Let $\nu_0, \kappa_0, M, \eps$, and $T_*$ be as in the statement of Theorem~\ref{thm:general}, and assume that $(w_0,z_0,a_0)$ satisfy the bounds \eqref{eq:w0:power:series}--\eqref{eq:w0:C0}. Then, we have that the $L^\infty$ bound \eqref{eq:L:infinity} holds, 
and additionally that the bounds
\begin{align*} 
\frac{\nu_0}{2} \leq P(\theta,t) \leq M, \qquad \frac{1}{M^2} \leq \upomega(\theta,t) \leq M^2, \qquad \abs{\p_\theta a(\theta,t)} \leq 3 M^2 \, ,
\end{align*} 
hold for all $\theta \in \TT$ and for all $t \in [0,T_*)$. 
\end{proposition}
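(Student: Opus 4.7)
The proof assembles four estimates, each of which essentially follows from already-proven bootstrap bounds together with the transport structure of \eqref{eq:euler:wza} and the algebraic identities relating $(w,z,a)$ to $(P,\upomega)$.

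First, to establish the $L^\infty$ bound \eqref{eq:L:infinity}, I would argue along Lagrangian characteristics of each of the three equations in \eqref{eq:euler:wza}. Because the forcing terms on the right-hand side of \eqref{eq:euler:wza} are quadratic polynomials in $(w,z,a)$ with coefficients depending only on $\alpha$, setting $Y(t) = \max\{\norm{w(t)}_{L^\infty}, \norm{z(t)}_{L^\infty}, \norm{a(t)}_{L^\infty}\}$ and composing with the relevant flow maps yields a Riccati-type inequality $\tfrac{d}{dt} Y \leq C_\alpha Y^2$ with $Y(-\eps) \leq 2\kappa_0 + 2$ by \eqref{eq:z0:a0:Cn} and \eqref{eq:w0:C0}. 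Since $T_* + \eps = \OO(\eps)$ is much smaller than $1/(C_\alpha Y(-\eps))$ once $\eps$ is taken small in terms of $\kappa_0$, integration gives the improved bound
\begin{align}
\norm{w(t)}_{L^\infty} + \norm{z(t)}_{L^\infty} + \norm{a(t)}_{L^\infty} \leq M \label{eq:improved:L:infty}
\end{align}
on $[-\eps,T_*)$, provided $M \geq 4 + 2\kappa_0$.

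Second, for the density bounds, I would use the Riemann-invariant identity $P^\alpha = \tfrac{\alpha}{2}(w-z)$ from \eqref{eq:riemann}. The upper bound $P \leq M$ is then immediate from \eqref{eq:improved:L:infty} after adjusting $M$. The lower bound $P \geq \nu_0/2$ is more subtle: at initial time $P_0 \geq \nu_0$ by the choice of $\kappa_0$ in \eqref{eq:kappa:0}, so $w_0 - z_0 \geq (2/\alpha)\nu_0^\alpha$. To propagate this positivity, I would derive the transport equation $(\p_t + b\p_\theta)P = -\tfrac{\gamma}{2}P(\p_\theta b) - \tfrac{\gamma}{\alpha} a P$ (from \eqref{eq:Euler:polar3}, using $b=(w+z)/2$), and observe that while $\p_\theta b$ can be large, along characteristics the quantity $\log P$ satisfies an ODE whose right-hand side integrates to $\OO(\eps^{1/4})$ once we use the bootstrap bounds on $\p_\theta w$, $\p_\theta z$ in self-similar variables (controlled by \eqref{vladss} and the support restrictions of Remark~\ref{rem:support}) to conclude $|\log(P/P_0)| \leq 1/2$.

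Third, for the vorticity, I would use the transport equation \eqref{eq:varpi} for $\varpi = \upomega/P = (2b - \p_\theta a)/P$. Since $\p_t \varpi + b \p_\theta \varpi = (a/\alpha)\varpi$, along the flow of $b$ we have $\varpi(\cdot,t) = \varpi_0 \exp\bigl(\tfrac{1}{\alpha}\int_{-\eps}^t a \circ \psi \, dt'\bigr)$, and since $\|a\|_{L^\infty} \leq M$ and the time interval has length $\OO(\eps)$, the exponential factor lies in $[1/2,2]$. Using \eqref{omega_abp} together with \eqref{eq:z0:a0:Cn} and the positivity of $P_0$ provides initial lower and upper bounds $\tfrac{1}{M}\leq \varpi_0 \leq M$ after adjusting constants, and multiplying by the bounds on $P$ yields the claimed bounds $1/M^2 \leq \upomega \leq M^2$.

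Finally, for $\p_\theta a$, rather than differentiating the $a$-equation (which would generate unbounded terms involving $\p_\theta w$), I would use the algebraic identity $\upomega = 2b - \p_\theta a = (w+z) - \p_\theta a$ from \eqref{omega_abp}, giving $\p_\theta a = (w+z) - \upomega$. The previously-established bounds $|w|+|z|\leq 2M$ and $|\upomega|\leq M^2$ yield $|\p_\theta a| \leq 2M + M^2 \leq 3M^2$ as soon as $M\geq 2$. The main delicate point in this program is closing step two (the strictly positive lower bound on $P$), since the transport equation for $P$ features a potentially large coefficient $\p_\theta b$; the key observation is that this coefficient is $\OO(e^s)$ only on a region of $\theta$-width $\OO(e^{-3s/2})$, so its time integral along characteristics is uniformly small.
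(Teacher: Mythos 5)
Your steps for the $L^\infty$ bound, the vorticity bound via $\varpi$, and the $\p_\theta a$ bound via the identity $\p_\theta a = w + z - \upomega$ all match the paper's argument. The divergence — and the genuine gap — is in the lower bound on $P$.

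The paper establishes the lower bound $P\geq\nu_0/2$ purely algebraically: since $P=\bigl(\tfrac{\alpha}{2}(w-z)\bigr)^{\sfrac{1}{\alpha}}$, it suffices to have a pointwise \emph{lower} bound on $w$ and an upper bound on $|z|$. The maximum-principle argument in step one, if carried out tracking the upper and lower envelopes of $w$ separately (not merely $\norm{w}_{L^\infty}$), yields $\kappa_0/4 \leq w \leq 2\kappa_0$ together with $\norm{z}_{L^\infty}\leq 2$ on $[-\eps,T_*)$, after which $P \geq \bigl(\tfrac{\alpha}{2}(\tfrac{\kappa_0}{4}-2)\bigr)^{\sfrac{1}{\alpha}} \geq \tfrac{\nu_0}{2}$ follows directly from \eqref{eq:kappa:0}. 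Your Riccati argument only controls the sum of the $L^\infty$ norms, which gives $w\leq C$ but not $w \geq \kappa_0/4$; this may be why you felt forced into the transport-equation route.

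Your alternative — writing a transport equation for $P$ with forcing proportional to $\p_\theta b$, then claiming $\int_{-\eps}^{T_*}\abs{\p_\theta b\circ\psi}\,dt$ is small — is a real gap, not a routine omission. Since $\p_\theta b = \tfrac12(\p_\theta w + \p_\theta z)$ and $\norm{\p_\theta w(\cdot,t)}_{L^\infty} \sim (T_*-t)^{-1}$ is not time-integrable, the finiteness of that integral along the $b$-characteristics depends on showing those characteristics stay distance $\gtrsim T_*-t$ from $\xi(t)$. That is exactly the type of argument the paper must make in Section~\ref{sec:z:prime} for $\p_\theta z$, using the explicit ODE \eqref{eq:Kobe} for $\zeta-\xi$ and the large-$\kappa_0$ term $I_1$; it is not a consequence of the support localization alone, because the characteristic could in principle be drawn into the shrinking singular region. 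Even granting finiteness, your target is the much stronger quantitative bound $|\log(P/P_0)|\leq 1/2$, which would require the full argument. The algebraic route is both simpler and complete; you should use it, and strengthen step one accordingly.
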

\begin{proof}[Proof of Proposition~\ref{prop:ballistic}]
From \eqref{eq:euler:wza}  we see that 
any $\varphi \in \{w,z,a\}$ satisfies an equation of the type $\partial_t \varphi + \lambda(w,z)  \varphi' = Q(w,z,a)$ where $Q$ is an explicit quadratic polynomial which obeys $\abs{Q(w,z,a)} \leq C_\alpha (\max\{\abs{w},\abs{z},\abs{a}\})^2$ for some constant $C_\alpha$ that only depends on $\alpha$, and $\lambda$ is a speed that is explicitly computable in terms of $w,z$ and $\alpha$. Recall that our initial datum assumptions imply $\sfrac{\kappa_0}{2} \leq w_0 \leq \sfrac{3\kappa_0}{2}$ on $\TT$, and that $\norm{z_0}_{L^\infty} + \norm{a_0}_{L^\infty} \leq 1$. From the maximum principle for forced transport equations, upon recalling that $\abs{T_*} \leq  \eps$, and upon taking $\eps$ to be sufficiently small, we deduce that 
\begin{align}
\label{eq:improved:L:infty}
\frac{\kappa_0}{4} \leq w(\cdot,t) \leq 2\kappa_0 \, \qquad  \norm{z(\cdot,t)}_{L^\infty} \leq 2 \, \qquad \norm{a(\cdot,t)}_{L^\infty} \leq 2
\end{align}
for any $t\in [-\eps,T_*)$. The above estimate shows that \eqref{eq:L:infinity} holds as soon as $M\geq 4 + 2\kappa_0$, as claimed in Remark~\ref{rem:Linfinity}. 

Since $P =  (\tfrac{\alpha }{2} (w-z))^{\sfrac{1}{ \alpha }}$,
from \eqref{eq:improved:L:infty} we deduce that
\begin{equation}\label{eq:P:upperbound}
\sup_{ t\in [- \epsilon , T_*)} \| P( \cdot , t)\|_{ L^ \infty(\T)} \le ( \alpha (\kappa_0+1))^{\sfrac 1 \alpha } \leq M\,,
\end{equation} 
upon taking $M$ to be sufficiently large (in terms of $\alpha$ and $\kappa_0$), 
and moreover that 
\begin{align}
P(\theta,t) \geq  \left(\tfrac{\alpha }{2} \left(\tfrac{\kappa_0}{4} - 2\right) \right)^{\sfrac{1}{ \alpha }} \geq \tfrac{\nu_0}{2} >0 
\label{eq:P:lowerbound}
\end{align}
for all $\theta \in \TT$ and $t\in [-\eps,T_*)$, by appealing to the lower bound \eqref{eq:kappa:0} on $\kappa_0$.
The above two bounds give the desired density estimates.
 
 Next, we consider estimates related to the vorticity. Since $\upomega_0 = 2 b_0 - \p_\theta a_0 = w_0 + z_0 - \p_\theta a_0$,  from \eqref{eq:z0:a0:Cn}, \eqref{eq:kappa:0}, and \eqref{eq:kappa:0} we deduce that
$$
\tfrac{\kappa_0}{4} \leq \tfrac{\kappa_0}{2}-2  \le \upomega_0 \le    \tfrac{3 \kappa_0}{2}  + 2 \leq 2 \kappa_0 \,,  
$$
and since $\varpi_0 = \frac{ \upomega_0}{P_0}$, from \eqref{eq:P:upperbound}--\eqref{eq:P:lowerbound} we obtain
$$
\tfrac{\kappa_0}{4 M}  \le \varpi_0 \le \tfrac{4 \kappa_0}{\nu_0}  \,.
$$
Furthermore, from equation \eqref{eq:varpi}, we have that $\varpi$ obeys a forced transport equation, and upon composing this equation with the flow of $b$, and exponentiating,  the standard Gr\"onwall inequality and the previously established bound \eqref{eq:improved:L:infty} imply that 
$$
\frac{\kappa_0}{8 M} \leq \frac{\kappa_0}{4 M} e^{- \frac{2t}{\alpha}} \le \varpi( \cdot , t) \le \frac{4 \kappa_0}{\nu_0} e^{\frac{2t}{\alpha}} \leq \frac{8 \kappa_0}{\nu_0} \, ,
\qquad \mbox{for all} \qquad t \in [- \epsilon , T_*)\, .
$$
Here we have used that $\eps$ is taken sufficiently small in terms of $\alpha, \kappa_0, M$ and $\nu_0$.
Combining the above bound with \eqref{eq:P:upperbound}--\eqref{eq:P:lowerbound} and the identity $\upomega = \varpi P$, we deduce that 
\begin{align*}
\frac{1}{M^2} \leq \frac{\kappa_0 \nu_0}{16 M}   \le \upomega( \cdot , t) \le  \frac{8 \kappa_0 M}{\nu_0} \leq M^2\, ,
\qquad \mbox{for all} \qquad t \in [- \epsilon , T_*)\, ,
\end{align*}
which is the desired vorticity upper and lower bound. Here we have assumed that $M$ may be taken to be sufficiently large, in terms of $\kappa_0$ and $\nu_0$. Finally, since $\p_\theta a = w + z - \omega$ we deduce from the above bound and \eqref{eq:improved:L:infty} that
\begin{align}
\norm{\p_\theta a( \cdot , t)}_{L^\infty}  \le  2 \kappa_0 + 2  \frac{8 \kappa_0 M}{\nu_0} \leq 3 M^2\, ,\qquad   \text{ for all } \qquad t \in [- \epsilon , T_*) \,,
\label{eq:a:prime:bnd}
\end{align}
upon taking $M$ sufficiently large, in terms of $\kappa_0$ and $\nu_0$. 
\end{proof}

\subsubsection{Sharp bounds for $W$ and $W_x$ as $\abs{x}\to \infty$ and H\"older $1/3$ estimates}
\label{sec:Holder:13}
From the bootstrap assumption \eqref{eq:bootstrap:2*} we know that as $\abs{x}\to \infty$ we have $\abs{\tilde W_x} = \abs{W_x - \bar W_x} \leq \sfrac{1}{20}$. Note,  however,
 that \eqref{eq:bar:Wx:infinity} implies the asymptotic behavior $\abs{x^{\sfrac 23} \bar W_x}  \to \sfrac 13$ as $\abs{x}\to\infty$. Our goal is to show that in fact $W_x$ itself also has a 
 $\abs{x}^{-\sfrac 23}$ decay rate as $\abs{x}\to \infty$, uniformly in $s$. To prove this, we show that this asymptotic behavior is valid for $\tilde W_x$, which we recall satisfies the evolution equation \eqref{eq:tilde:Wx:evo}. In order to normalize the behavior at infinity, we consider the function $\mathcal V$ defined as 
\begin{equation}\label{eq:def:mathcalV}
\mathcal V(x,s) =   (x^{\sfrac 23} + 8) \tilde W_x(x,s) \, ,
\end{equation} 
where the translation of $x^{\sfrac 23}$ by $8$ will be explained below  in the course of the argument. Our objective is to show that 
\begin{align}
\norm{\mathcal V(\cdot, s)}_{L^\infty} \leq 1
\label{eq:cal:V:bootstrap}
\end{align}
for all $s\geq -\log \eps$.  We remark that at the initial time $s = -\log \eps$, we have 
 \begin{align*}
 \abs{\mathcal V(x,-\log \eps)} = \left(\frac{\theta^{\sfrac 23}}{\eps} + 8 \right) \abs{\eps (\p_\theta w_0)(\theta) - (\bar W_x)\left(\frac{\theta}{\eps^{\sfrac 32}}\right)} \leq \frac 12 \, , \qquad\mbox{for all} \qquad x\in \RR\, ,
\end{align*}
in view of assumption \eqref{eq:big:assume:ab} on the initial datum.  Additionally, note that by  \eqref{eq:bootstrap:2*},  we have
\begin{align}
\abs{\mathcal V(x,s)} \leq \frac{x^2(x^{2/3}+8)}{20 (1+x^2)} \leq \frac 12\, ,\qquad \mbox{for all} \qquad \abs{x} \leq 2 \,  ,
\label{eq:V:cal:origin}
\end{align}
and thus \eqref{eq:cal:V:bootstrap} is automatically satisfied with a better constant ($\sfrac 12$ instead of $1$) for $\abs{x}\leq 2$. 

Similarly to \eqref{eq:V:evo}, a simple computation shows that $\mathcal V$ satisfies
\begin{align}
&\p_s \mathcal V + \left(1 + \tilde W_x +  2 \bar W_x    -    \frac{2 x^{\sfrac 23}}{3(x^{\sfrac 23}+8)}  \left(\frac{3}{2} + \frac{\bar W + \tilde W}{x}  \right)  \right) \mathcal V
+ \left( g_W + \frac{3x}{2} + \frac{W}{1-\dot \tau} \right) {\mathcal V}_{x}  \notag\\
&  = (x^{\sfrac 23} + 8) \partial_x F_W    - \left( g_W + \frac{\dot\tau \bar W}{1-\dot \tau}   \right) (x^{\sfrac 23} + 8) \bar W_{xx} - \left( \frac{\dot \tau  \bar W_x}{1-\dot \tau}  +  \frac{(1-\alpha) e^{\frac s2} Z_x}{(1+\alpha) (1-\dot \tau)} \right)  (x^{\sfrac 23} + 8) \bar W_x   \notag\\
&\qquad - \left(  \frac{\dot \tau(\tilde W_x +  2 \bar W_x ) + \tfrac{1-\alpha}{1+\alpha} e^{\frac s2} Z_x}{1-\dot\tau} - \frac{2 x^{\sfrac 23}}{3(x^{\sfrac 23}+8)}  \left( \frac{\dot \tau W}{(1-\dot \tau)x} + \frac{g_W}{x}\right)  \right) \mathcal V \notag\\
&\qquad - \frac{1}{1-\dot \tau}  (x^{\sfrac 23} + 8) \bar W_{xx}  \int_0^x {\mathcal V}(x') \frac{1}{ (x')^{\sfrac 23} + 8} dx' \, .
\label{eq:cal:V:evo}
\end{align}
It is convenient to rewrite \eqref{eq:cal:V:evo} schematically as
\begin{align}
\p_s \mathcal V + {\mathcal D}(x,s) \mathcal V + \mathcal U (x,s) \mathcal V_x = \mathcal F_1 (x,s) + \mathcal F_2 (x,s) + \int_0^\infty \mathcal V(x',s) {\mathcal K}(x,x',s) dx' 
\label{eq:cal:V:evo:1}
\end{align}
where ${\mathcal D}$ and $\mathcal U$ are determined by the first  line on the left side of \eqref{eq:cal:V:evo}, the forcing term $\mathcal F_1$ is given by the first line on the right side of \eqref{eq:cal:V:evo},  the forcing term $\mathcal F_2$ is given by the second line on the right side of \eqref{eq:cal:V:evo}, and ${\mathcal K}$ is defined by the last line of the 
$\mathcal V$ evolution as ${\mathcal K}(x,x',s) = - \frac{1}{1-\dot \tau}  (x^{\sfrac 23} + 8) \bar W_{xx}(x)  \frac{{\bf 1}_{[0,x]}(x') }{ (x')^{\sfrac 23} + 8}$. 
The argument fundamentally consists of a  comparison between the  damping term ${\mathcal D}$ with the $L^1_{x'}$-norm of the kernel $\mathcal K$, similar in spirit to the one used to prove Lemma~\ref{lem:max:princ}.

Using the fundamental theorem of calculus, the fact that $\tilde W(0,s) = 0$, and the bootstrap assumption  \eqref{eq:cal:V:bootstrap}, we obtain the following lower bound on 
the damping term:
\begin{align*}
{\mathcal D}(x,s) \geq  1    - \frac{1}{x^{2/3} + 8} +  2 \bar W_x    -    \frac{2 x^{\sfrac 23}}{3(x^{\sfrac 23}+8)}  \left(\frac{3}{2} + \frac{\bar W}{x} + \frac{1}{x} \int_0^x \frac{dx'}{(x')^{\sfrac 23} + 8}\right) =: {\mathcal D}_{\operatorname{upper} }(x)
\end{align*}
On the other hand, using our bound for $\dot \tau$ \eqref{eq:dot:tau:decay}, we have that
\begin{align*}
\int_\RR \abs{\mathcal K(x,x',s)} dx' \leq (1+2 \eps^{\sfrac 14}) (x^{\sfrac 23} + 8) \abs{\bar W_{xx}(x) } \int_0^{\abs{x}} \frac{dx'}{(x')^{\sfrac 23} + 8} =: {\mathcal D}_{\operatorname{lower} }(x) \,.
\end{align*}
The choice of the translation constant $8$ in the weight appearing in \eqref{eq:def:mathcalV} was chosen so that 
by letting $\eps$ be sufficiently small, we  ensure that 
\begin{align}
0< {\mathcal D}_{\operatorname{lower} }(x) \le {\mathcal D}_{ \operatorname{upper} }(x), \qquad \mbox{for all} \qquad \abs{x}\geq 2.
\label{eq:V:cal:damping}
\end{align}
While, in fact, ${\mathcal D}_{\operatorname{lower} }(x) \le \tfrac{3}{4} {\mathcal D}_{ \operatorname{upper} }(x)$ for $\abs{x}\ge 2$
as required by \eqref{eq:max:princ:kernel}, the reason we cannot apply Lemma~\ref{lem:max:princ} is that for $\abs{x} \gg 1$ we have ${\mathcal D}_{ \operatorname{upper} }(x) = 5 x^{-\sfrac 23} + \OO(\abs{x}^{-1})$, and so we cannot obtain a uniform in $x$ lower bound on the damping, as required by \eqref{eq:max:princ:damping}. Nonetheless, we will still apply an argument similar to the one used to prove Lemma~\ref{lem:max:princ}.

Next, we estimate the forcing term $\mathcal{F} _1$. The most delicate term is the one due to $\p_x F_W$, which is bounded using \eqref{eq:cal:V:bootstrap} and the support property discussed in Remark~\ref{rem:support}, as
\begin{align*}
\norm{(x^{\sfrac 23}+8) \p_x F_W}_{L^\infty} 
&\les e^{-\sfrac s2} \norm{(x^{\sfrac 23}+8) \p_x (AZ)}_{L^\infty} + e^{-\sfrac s2} \norm{(x^{\sfrac 23}+8) \p_x A}_{L^\infty} \norm{e^{-\sfrac s2}W + \kappa}_{L^\infty} \notag\\
&\qquad + e^{-s}  \norm{A}_{L^\infty}   \left(1+ \norm{(x^{\sfrac 23}+8) \bar W_x}_{L^\infty}  \right)
\notag\\
&\les M^2 e^{-\delta s} + M e^{-s} 
\end{align*}
where the implicit constant depends only on $\alpha$.  The remaining forcing terms are easier to estimate since we already know the decay rates 
$\bar W_x = \OO(\abs{x}^{-\sfrac 23})$ and $\bar W_{xx} = \OO(\abs{x}^{-\sfrac 53})$ as $\abs{x}\to \infty$. Using the available estimate \eqref{eq:dot:tau:decay} for $\dot \tau$, 
the bound  
\eqref{eq:Zx:bnd} for $\p_x Z$, and the third line of \eqref{eq:gW:Lip}  to bound $g_W$, after a computation we deduce that the total forcing term may be estimated as 
\begin{align}
\norm{\mathcal F_1(\cdot,s)}_{L^\infty} \leq C  M^2 e^{-\delta s} + C  M e^{-s}   \leq  e^{-\sfrac{\delta s}{2}}  
\label{eq:V:cal:force}
\end{align}
by choosing $\eps$ to be sufficiently small in terms of $M$ and the constant $C$ which only depends on $\alpha$.   Similarly, we have that
\begin{align}
\norm{\mathcal{F} _2(\cdot,s)}_{L^\infty(\abs{x}\geq 2)} \leq   e^{- \sfrac{\delta s}{2}}  \,,
\label{eq:V:cal:bad}
\end{align}
which follows from the previously established properties of $\dot \tau$, $W_x$, $\bar W_x$, $Z_x$, and $g_W$, after choosing $\eps$ to be sufficiently small in terms of $\alpha, \delta, M$.

In order to conclude the proof of \eqref{eq:cal:V:bootstrap}, we claim that 
\begin{align}
\norm{{\mathcal V}(\cdot,s)}_{L^\infty}\leq \frac{3}{4} 
\label{eq:cal:V:bootstrap:*}
\end{align}
which would show that the bootstrap assumption
\eqref{eq:cal:V:bootstrap} holds with an even better constant ($\sfrac 34$ instead of $1$), thereby closing it. 
If \eqref{eq:cal:V:bootstrap:*} were to fail at some time $s_1>-\log \eps$, by continuity in time there exists a   time $s_0 \in (-\log \eps, s_1)$ such that 
$ \norm{\mathcal V(\cdot,s)}_{L^\infty} \geq \norm{\mathcal V(\cdot,s_0)}_{L^\infty} = \sfrac 58 $ for all $s\in [s_0,s_1]$. Then, for $s\in [s_0,s_1)$ we may evaluate \eqref{eq:cal:V:evo:1} at the global maximum of 
$\abs{\mathcal V}$, which is ensured to be attained at a point $x_*=x_*(s)$ with $\abs{x_*}\geq 2$, since $W_x$ is compactly supported, 
$(x^{\sfrac 23} + 8) \abs{\bar W_x} \to \sfrac 13 < \sfrac 58$ as $\abs{x} \to \infty$, and \eqref{eq:V:cal:origin} holds.  Without loss of generality, let us consider the case when $\mathcal V(x_*(s),s)$ is 
the global maximum for $\mathcal V$ (the case of a global minimum is treated similarly). At this maximum point 
$\mathcal V_x$ vanishes, and using \eqref{eq:V:cal:damping} we obtain
\begin{align*}
{\mathcal D}(x_*(s),s) \mathcal V(x_*(s),s) 
&\geq {\mathcal D}_{\operatorname{upper} }(x_*(s)) \norm{\mathcal V(\cdot,s)}_{L^\infty}   \notag\\
&\ge {\mathcal D}_{\operatorname{lower} }(x_*(s)) \norm{\mathcal V(\cdot,s)}_{L^\infty} 
\geq \abs{\int_\RR  \mathcal K(x_*(s),x',s) {\mathcal V}(x',s) dx'} \,.
 \end{align*}
Therefore, at $x_*(s)$ the second term on the left side of \eqref{eq:cal:V:evo:1} dominates the third term on the right side of \eqref{eq:cal:V:evo:1}.  Next, via a standard Rademacher 
argument (applicable since $\mathcal V$ is smooth), and using the bounds \eqref{eq:V:cal:force}--\eqref{eq:V:cal:bad} we obtain that a.e. in $s$ 
\begin{align*}
\frac{d}{ds} \norm{\mathcal V(\cdot,s)}_{L^\infty} \leq 2 e^{-\sfrac{\delta s}{2}}\,.
\end{align*} 
Using that by assumption $\norm{\mathcal V(\cdot,s_0)}_{L^\infty} = \sfrac 58$, we  integrate the above inequality for $s\geq s_0$ and deduce that 
\begin{align*}
\norm{\mathcal V(\cdot,s)}_{L^\infty} \leq (\sfrac 58 + 1) e^{\frac{4}{\delta} \eps^{\sfrac{\delta}{2}}} - 1 < \sfrac 34
\end{align*}
for all $s> s_0> - \log \eps$, upon taking $\eps$ to be sufficiently small. This provides the desired contradiction  and thus \eqref{eq:cal:V:bootstrap:*} holds, concluding the proof.

\begin{remark}[\bf Uniform H\"older bounds]
\label{rem:Holder:13}
Estimate \eqref{eq:cal:V:bootstrap} and properties of the function $\bar W_x$ imply that 
\begin{align}
\abs{W_x(x,s)} \leq \frac{1}{x^{\sfrac 23} + 8} + \abs{\bar W_x(x)} \leq \frac{2}{x^{\sfrac 23}}
\label{eq:W:x:asymptotic}
\end{align}
for all $x\in \RR$ and $s\geq - \log \eps$. Since $W(0,s) = 0$ for all $s$, integrating the above estimate   in $x$ we arrive at
\begin{align}
\abs{W(x,s)} \leq 6 \abs{x}^{\sfrac 13}
\label{eq:W:asymptotic}
\end{align}
for all $x\in \RR$ and $s\geq - \log \eps$. The bounds \eqref{eq:cal:V:bootstrap}--\eqref{eq:W:asymptotic}   imply that $w \in L^\infty([-\eps,T_*);C^{\sfrac 13}(\TT))$. To see this, consider any two points $\theta\neq \theta' \in \TT$. Accordingly, define the points $x = \frac{\theta - \xi(t)}{(\tau-t)^{\sfrac 32}} \neq x' =  \frac{\theta' - \xi(t)}{(\tau-t)^{\sfrac 32}}$ by   the scaling \eqref{eq:x:s:def}. Due to the description \eqref{eq:ss:ansatz}  of $w$ we have that 
\begin{align}
\frac{\abs{w(\theta,t) - w(\theta',t)}}{\abs{\theta-\theta'}^{\sfrac 13}} 
= 
\frac{\abs{W(x,s) - W(x',s)}}{\abs{x-x'}^{\sfrac 13}}
\, .
\label{eq:Holder:13}
\end{align}
At this stage we remark that when $x'=0$, and $x$ is taken to be arbitrary, the bound \eqref{eq:W:asymptotic} implies that the right side of \eqref{eq:Holder:13} is bounded by $6$ uniformly in $s$. To consider the general case of $x\neq x'$, we combine \eqref{eq:W:x:asymptotic} with \eqref{eq:bootstrap:2**} to deduce that 
$\abs{W_x(x,s)} \les (1+x^2)^{-\sfrac 13}$  where the implicit constant is universal. Then, using the fundamental theorem of calculus we estimate
\begin{align*}
\sup_{x>x'} \frac{\abs{W(x,s) - W(x',s)}}{\abs{x-x'}^{\sfrac 13}} \les \sup_{x>x'}\frac{\int_{x'}^x  (1+y^2)^{-\sfrac 13} dy}{(x-x')^{\sfrac 13}}  \les 1
\end{align*}
where the implicit constant is universal, and is in particular independent of $s$.  This concludes the proof of the uniformy in time H\"older $\sfrac 13$ estimate for $w$. It is not hard to see that $C^\alpha$ H\"older norms of $w$, with $\alpha>\sfrac 13$ blow up as $t\to T_*$ with a rate proportional to $(T_*-t)^{\sfrac{(1-3\alpha)}{2}}$. 
\end{remark}

\subsubsection{Bounds for $\p_\theta z$ as $t\to T_*$}
\label{sec:z:prime}
In view of the relation $\p_\theta z = e^{\sfrac{3s}{2}} \p_x Z$, and the already established bound \eqref{eq:bootstrap:3}, we have that 
$\norm{\p_\theta z(\cdot,t)}_{L^\infty} \leq 2 M (T_* - t)^{-1 + \delta}$, for $t\in [-\eps,T_*)$.
Here we have used that 
\begin{equation}\label{eq:new1}
(1-\eps^{\sfrac 14})(T_*-t) \leq \tau(t) - t \leq (1+\eps^{\sfrac 14}) (T_*-t)\,,
\end{equation} 
which is a consequence of Remark~\ref{rem:blowup:details} and the identity 
$\tau(t) - t = \eps - \int_{-\eps}^t (1-\dot \tau) = \int_{t}^{T_*}(1-\dot \tau)$, and the fact that $\tau(t) - t = e^{-s}$. We may, however,  show that $\p_\theta z$ remains in fact bounded 
as $t \to T_*$.

Upon differentiating \eqref{eq:z:evo} with respect to $\theta$, we obtain
\begin{align}
\left( \partial_t  +\left(z+\tfrac{1-\alpha}{1+\alpha}w\right)\partial_{\theta}\right) (\p_\theta z)
&= - \left(\p_{\theta} z+\tfrac{1-\alpha}{1+\alpha}\p_{\theta} w\right)(\p_{\theta} z)  
- \tfrac{1-2\alpha}{1+\alpha} a (\p_\theta w) - \tfrac{3+2\alpha}{1+\alpha} a (\p_\theta z)
\notag\\
&\qquad -\tfrac{1}{1+\alpha}\p_\theta a \big((1-2\alpha)w+(3+2\alpha)z\big) 
\,.
\label{eq:z:prime}
\end{align}
Note that by \eqref{eq:improved:L:infty} and \eqref{eq:a:prime:bnd}, we know that $a, z, w$, and  $\p_\theta a$ remain uniformly bounded in $L^\infty(\T)$ over $[-\eps,T_*)$, and so 
we may think of these terms as constants in \eqref{eq:z:prime}.  Moreover, since $\norm{\p_\theta z_0}_{L^\infty} \leq 1$, the term $- (\p_\theta z)^2$ on the right side of
\eqref{eq:z:prime} cannot by itself cause a finite time singularity  in time $\OO(\eps)$. The blowup of $\p_\theta z$ could only be caused by the terms involving $\p_\theta w$ on the 
right side of \eqref{eq:z:prime}; specifically the $- \frac{1-\alpha}{1+\alpha} (\p_\theta z) (\p_\theta w)$ term is dominant near a putative singularity of $\p_\theta z$. 
Indeed, $\norm{\p_\theta w}_{L^\infty} = e^s \norm{W_x}_{L^\infty} = e^s \geq (\sfrac 12) (T_*-t)^{-1}$, and so $\int_{-\eps}^{T_*} \norm{\p_\theta w(\cdot, t)}_{L^\infty} = + \infty$, 
which could be sufficient to cause a singularity.

Our main observation is that if we compose \eqref{eq:z:prime} with its natural Lagrangian flow $\zeta_{x_0}(t)$, defined as
\begin{align}
\tfrac{d}{dt} \zeta_{\theta_0}(t) = z(\zeta_{\theta_0}(t),t) +\tfrac{1-\alpha}{1+\alpha} w(\zeta_{\theta_0}(t),t)\, , \qquad \zeta_{\theta_0}(- \eps) = \theta_0 \, ,
\label{eq:zeta:def}
\end{align}
then  the quantity $\int_{-\eps}^{T_*} \p_\theta w(\zeta_{\theta_0}(t),t) dt$ is the relevant one to study for bounding $\norm{\p_\theta z}_{L^\infty}$. We claim that as $t\to T_*$ the 
quantity $\abs{\p_\theta w(\zeta_{\theta_0}(t),t)}$ does not  blow up at a non-integrable rate. Once the claim is proven, standard ODE arguments imply that the solution $\p_\theta z$ of \eqref{eq:z:prime} remains bounded in $L^\infty$ as $t\to T_*$.

For the remainder of this proof we drop the subindex $\theta_0$ of $\zeta_{\theta_0}$ (it is frozen) and we use $e^{-s}$ and $T_* - t$ interchangeably 
as they are comparable up to a factor of $1 \pm \eps^{\sfrac 14}$ by \eqref{eq:new1}.  By the definition of $W$  in \eqref{eq:ss:ansatz} and the previously established bound \eqref{eq:W:x:asymptotic},  
we have that 
\begin{align}
\abs{\p_\theta w (\zeta(t),t)} = e^s \abs{W_x\left( (\zeta(t) - \xi(t)) e^{\frac{3s}{2}}, s\right)}  \lesssim  \frac{1}{T_*-t} \left(1+\frac{\abs{\zeta(t) - \xi(t)}}{(T_*-t)^{\sfrac 32}}\right)^{-\sfrac 23}  \, .
\label{eq:Kawhi}
\end{align}

Consider the case that $ \zeta(T_*) \neq \xi(T_*)$.  Then, by continuity, $|\zeta(t) - \xi(t)| \ge c$ for $t$ sufficiently close to $T_*$.   Therefore, from \eqref{eq:Kawhi}, 
$\abs{\p_\theta w (\zeta(t),t)} $ is bounded.   Otherwise,  $\zeta(t) - \xi(t) \to 0$ as $t\to T_*$.
Our goal is to show that there exists a constant $c_*$ such that for all $t$ sufficiently close to $T_*$ we have $\abs{\zeta(t) - \xi(t)} \geq c_* (T_*-t)$. Once this claim is established, it follows from \eqref{eq:Kawhi} that $\int_{-\eps}^{T_*} \abs{\p_\theta w (\zeta(t),t)} dt < \infty$, as desired.    

It remains to prove the claimed lower bound for $\zeta - \xi$.   Using the definition of $\zeta(t)$ in \eqref{eq:zeta:def} and the 
definition of $\dot \xi$ in \eqref{eq:dot:xi}, we derive that
\begin{align}
\zeta(t) - \xi(t) &= \int_{t}^{T_*} \dot\xi(t') - z(\zeta(t'),t') - \tfrac{1-\alpha}{1+\alpha} w(\zeta(t'),t') dt' \notag\\
&= \tfrac{2\alpha}{1+\alpha}  \int_{t}^{T_*} \kappa(t')  dt' + \int_{t}^{T_*} \tfrac{1-\alpha}{1+\alpha} Z^0(s') - Z\left((\zeta(t')-\xi(t')) e^{\frac{3s'}{2}},s'\right)  dt' \notag\\
&\qquad - \int_{t}^{T_*}  \tfrac{1-\alpha}{1+\alpha} e^{-\frac{s'}{2}} W\left((\zeta(t')-\xi(t')) e^{\frac{3s'}{2}},s'\right) + (1-\dot \tau) e^{-\frac{s'}{2}} \frac{F_W^{0,(2)}(s')}{W_{xxx}^0(s')} dt' 
\notag \\
&= I_1(t) + I_2(t) - I_3(t)
\label{eq:Kobe}
\end{align}
where $e^{-s'} = \tau(t') - t'$. From \eqref{eq:acceleration:bound} we deduce that $I_1(t) \geq \frac{\alpha \kappa_0}{1+\alpha}(T_*-t)$, upon taking $\eps$ sufficiently small in terms of $M$ and $\kappa_0$. It is essential here that $\alpha > 0$, i.e. $\gamma>1$. Using \eqref{eq:improved:L:infty} we immediately obtain that $\abs{I_2(t)} \leq \frac{4}{1+\alpha} (T_* - t)$. Lastly, using our   bootstrap assumptions and the estimate \eqref{eq:W:asymptotic}, after a tedious computation we deduce that the integrand of $I_3$ may be bounded in absolute value as $\les e^{- \delta s'} \les (T_*-t')^{\delta}$, and therefore $\abs{I_3(t)} \les  (T_*-t)^{1+\delta} \les \eps^\delta (T_* -t)$. We collect the above estimates and insert them in \eqref{eq:Kobe}, to deduce that 
$\abs{\zeta(t) - \xi(t)} \geq \frac{\alpha \kappa_0}{1+\alpha}(T_*-t) - \frac{5}{1+\alpha} (T_* - t) \geq \frac{1}{1+\alpha}(T_*-t)$, by taking $\kappa_0$ sufficiently large, in terms of $\alpha$. 
As discussed above, this lower bound concludes our proof for the boundedness of $\p_\theta z$. 

\section{Concluding remarks}\label{sec:conclusion}
By considering homogeneous solutions to the isentropic 2D compressible Euler equations, and using a transformation to self-similar coordinates with dynamic modulation
variables, we have 
proven that for  an open set of smooth initial data with $\OO(1)$ amplitude, $\OO(1)$ vorticity, and with minimum initial slope  $ -{\sfrac{1}{\eps}} $,  there exist smooth solutions of 
the Euler equations  which form an asymptotically self-similar shock within  $\OO( \eps)$ time.       Our method 
is based on perturbing purely azimuthal waves which inherently possess nontrivial vorticity, and thus,  our constructed 
solutions have $\OO(1)$ vorticity at the shock, as well as a lower-bound on the density, so that no vacuum regions can form during the formation of the shock singularity.   

A key feature of our method is that the purely azimuthal wave is governed exactly by the Burgers equations (as demonstrated for the special case that $\gamma=3$), and thus
our construction uses precise information on the stable self-similar solution $\bar W$ of the Burgers equation.  This allows us to provide detailed information about the blowup: by using the ODEs solved by $\tau(t)$ and $\xi(t)$, it is possible to compute the exact blowup time and location for our solutions to the 2D Euler equations.  Moreover, we have shown that the blowup profiles  have cusp singularities with H\"{o}lder $C^ {\sfrac{1}{3}} $ regularity.

We have shown in Remark \ref{rem:Euler_continue} that  in the case that $\gamma=3$, the first singularity can be continued as a discontinuous propagating shock wave for all 
time.\footnote{Note that even the purely azimuthal shock solution has vorticity, and this is extremely important for the shock continuation problem as
 initially irrotational flows can generate vorticity after the shock \cite{Ch2007}.}
In fact, we believe that the solutions we have constructed have  this type of continuation property for general $\gamma>1$.

\begin{conjecture} Given that the asymptotically self-similar shock solutions constructed in Theorem \ref{thm:general} form a $C^ {\sfrac{1}{3}} $ cusp at the initial blowup
time $t=T_*$, these solutions can be continued for short time as propagating piecewise smooth discontinuous  (possibly non-unique) shock profiles   which solve  the Euler equations on either side
of the time-dependent {\it curve of discontinuity}, and the evolution of this shock (or discontinuity) is governed by the Rankine-Hugoniot conditions.
\end{conjecture}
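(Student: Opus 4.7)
The plan is to construct the post-shock continuation as a piecewise-smooth weak solution $(u^\pm, \rho^\pm)$ defined on either side of an evolving discontinuity curve $\Gamma(t) = \{(r,\theta) : \theta = \theta_*(t),\ r\in I\}$ with $\theta_*(T_*) = \theta_*$, using as initial data the one-sided limits of the $C^{\sfrac 13}$ cusp profile constructed in Theorem \ref{thm:general}. The first step is to extract these limits explicitly. Using the self-similar ansatz \eqref{eq:ss:ansatz} together with the sharp asymptotic \eqref{eq:W:asymptotic} and the Taylor expansion \eqref{eq:bar:Wx:infinity} of $\bar W_x$ at infinity, one shows that as $t \to T_*^-$ the function $w(\theta,t)$ admits a pointwise limit $w(\theta, T_*)$ which agrees with the continuous cusp value at $\theta = \theta_*$ but has distinct one-sided slopes of order $|\theta-\theta_*|^{-\sfrac 23}$. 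Since $z$ and $a$ remain $C^1$ up to $T_*$ by the estimates of Section~\ref{sec:z:prime} and Proposition~\ref{prop:ballistic}, the same is true of $u_\theta = r(w+z)/2$ and $\rho = r^{\sfrac 1\alpha}(\alpha(w-z)/2)^{\sfrac 1\alpha}$.

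Next, I would formulate a free-boundary problem on a short time slab $[T_*, T_* + \delta]$: the unknowns are the curve $\theta = \theta_*(t)$ together with smooth Euler solutions $(u^\pm, \rho^\pm)$ on the two regions $\{\theta \lessgtr \theta_*(t)\}$, satisfying \eqref{eq:Euler:polar} classically on each side, the three Rankine-Hugoniot jump conditions across $\Gamma(t)$ (mass, radial momentum, azimuthal momentum), and the Lax entropy condition $\lambda_2(u^-,\rho^-) > \dot\theta_*(t) > \lambda_2(u^+,\rho^+)$ on the $2$-shock, with $\lambda_1,\mathcal R_-$ and the vorticity-to-density ratio transported across $\Gamma(t)$. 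The initial values for $(u^\pm,\rho^\pm)$ at $t=T_*$ are obtained by extending the $C^{\sfrac 13}$ profile analytically across $\theta_*$ in Riemann variables via the multi-valued forward self-similar formula of Lax--Oleinik type (generalizing \eqref{eq:Burgers:ss}) and then converting back through the relations \eqref{eq:riemann} and \eqref{scale0}.

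The main obstruction is that the initial shock has \emph{vanishing amplitude} at $t=T_*$: $\llbracket u_\theta\rrbracket = \llbracket \rho\rrbracket = 0$, with only the slopes jumping. Consequently the classical shock-existence theorems of Majda \cite{Ma1983,Ma1983b} and M\'etivier \cite{Met2001}, which presuppose a uniformly non-characteristic initial jump of positive size, do not apply verbatim. My approach would be to stay in the modulated self-similar framework of Section~\ref{sec:self-similar}, extended to $s = +\infty$: introduce a forward self-similar variable $\eta = (\theta - \theta_*(t))(t-T_*)^{-\sfrac 32}$ and seek $w$ in the form of a perturbation of a \emph{multi-valued} forward Burgers-type profile $\bar W_{\mathrm{fwd}}(\eta)$ (built from $Y$ in \eqref{eq:Burgers:ss}) whose three-sheeted middle branch is cut out by a Rankine--Hugoniot jump at $\eta = 0$. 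The $Z, A$ components, together with the modulation variables $\tau,\xi,\kappa$, play the role of smooth corrections on either sheet. The stability analysis of $\bar W$ carried out in Section~\ref{sec:general_gamma} — in particular the pointwise damping estimates \eqref{eq:V:main:damping} and \eqref{eq:Shaq} — should transfer to this forward profile on each branch, because the linearized Burgers operator has the same structural sign.

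Closing the argument then amounts to a fixed-point argument in a weighted H\"older space compatible with a jump of size $\sim(t-T_*)^{\sfrac 12}$ across $\Gamma(t)$, with the Rankine--Hugoniot conditions providing the boundary conditions for the free-boundary problem at $\eta = 0$ and determining $\dot\theta_*$ through $\dot\theta_* = \llbracket \rho u_\theta\rrbracket/\llbracket \rho\rrbracket$. The hardest step will be the uniform control of the $2$-shock as it emerges from zero strength: the Kreiss--Lopatinski\u{\i} condition for the linearized jump problem degenerates at $t=T_*$, and one must show that this degeneration is compensated by the cubic growth of $\bar W_{\mathrm{fwd}}$ at the origin so that the a priori estimates close in the H\"older $\sfrac 13$ scale. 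The non-uniqueness is then expected to enter precisely at the level of the entropy criterion: Lax admissibility selects a canonical solution, but weaker criteria may admit composite wave patterns — for instance a $1$-rarefaction emitted from $\Gamma(T_*)$ alongside the $2$-shock — generated by the non-monotone branches of $\bar W_{\mathrm{fwd}}$.
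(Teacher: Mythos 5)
This statement is labeled a \emph{conjecture} in the paper, and the paper offers no proof of it for general $\gamma>1$: the only case in which the post-singularity continuation is actually carried out is $\gamma=3$ (Remark~\ref{rem:jump} and Remark~\ref{rem:Euler_continue}), where the dynamics collapse to the scalar Burgers equation, the Lax--Oleinik formula gives the global entropy solution explicitly, and the Rankine--Hugoniot condition is verified by a one-line computation. So there is no paper proof to compare your proposal against, and you should be aware that what you have written is a roadmap for an open problem rather than a proof.

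As a roadmap it is sensible and it agrees with how the authors frame the difficulty. In particular you correctly identify the central obstruction, which the paper also implicitly acknowledges by leaving this as a conjecture: the shock emerges from \emph{zero strength} at $t=T_*$ (only the one-sided slopes blow up; $\llbracket u_\theta\rrbracket=\llbracket\rho\rrbracket=0$), so Majda's and M\'etivier's multidimensional shock-front existence theorems, which require a uniformly non-characteristic discontinuity of definite size, do not apply. Your idea of working in forward self-similar variables $\eta=(\theta-\theta_*(t))(t-T_*)^{-\sfrac 32}$, generalizing the explicit $\gamma=3$ formula~\eqref{eq:Burgers:ss}, is the natural one and mirrors how the paper treats the $\gamma=3$ case.

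However, the proposal has genuine gaps that prevent it from being a proof. First, for $\gamma\neq 3$ the $(a,z,w)$ system is genuinely coupled and there is no Lax--Oleinik-type closed form, so the proposed analytic extension of the $C^{\sfrac 13}$ profile across $\theta_*$ via a ``multi-valued forward self-similar formula'' does not exist a priori; one would have to prove the existence of such a forward self-similar profile for the full coupled system, which is itself a nontrivial ODE/free-boundary problem. Second, the assertion that the damping estimates~\eqref{eq:V:main:damping} and~\eqref{eq:Shaq} ``should transfer'' to the forward profile on each branch is not justified: those estimates are tailored to the backward stable self-similar profile $\bar W$ and the specific weights chosen for the pre-shock analysis; the forward continuation lives in a different self-similar scaling regime and the linearized operator is not the same. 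Third, the claim that the degeneration of the Kreiss--Lopatinski\u{\i} condition is ``compensated by the cubic growth of $\bar W_{\mathrm{fwd}}$ at the origin'' is stated as an expectation, not demonstrated; this is precisely the hard step, and the paper's own conjecture status signals that the authors did not see a route through it. In short, your proposal is a reasonable plan of attack, but it is not a proof, and in fairness neither the paper nor anyone else (at the time of the paper) had one for general $\gamma>1$.
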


The solution we have constructed consists of a sound wave  which steepens  and shocks  in the azimuthal direction as well as the azimuthal velocity which also steepens and shocks
in the azimuthal direction.  The radial component of velocity can steepen in the azimuthal direction but does not shock.

\begin{conjecture}  Suppose that $(\rho,u_r,u_\theta)$ denotes the solution to the Euler equations given in Theorem \ref{thm:general}.   Then at the first blowup time $t=T_*$, the variable
$\p_\theta u_r$  is Lipschitz and no better.  In turn, let $\Omega(t)$ denote the material curve defined in \eqref{Omega}.  Then $\p\Omega(T_*)$ forms
a corner singularity.
\end{conjecture}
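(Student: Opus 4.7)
The strategy is to propagate regularity of $a$ and of the material flow $\eta_u$ along Lagrangian characteristics, crucially using that both the $a$-characteristics and the Euler particle trajectories \emph{separate} from the shock location at rate $T_*-t$ (rather than at the much faster rate $(T_*-t)^{3/2}$ which is the $\theta$-scale of the self-similar blowup). Indeed, repeating the argument of Section~\ref{sec:z:prime} with $\zeta$ replaced by an $a$-characteristic $\psi_a$, the transport speed $(w+z)/(1+\alpha)$ differs from $\dot\xi\approx \kappa$ by a quantity bounded below by $\tfrac{2\alpha\kappa_0}{1+\alpha}>0$ thanks to $\gamma>1$ and the density lower bound \eqref{eq:P:lowerbound}; an analogous estimate $|b-\dot\xi|\gtrsim \kappa_0/2$ holds for the angular component $b=(w+z)/2$ of the Euler velocity, giving the corresponding separation for material trajectories in $\Omega(t)$. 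Passing to self-similar variables, these separations read $|x_a(s)|\gtrsim e^{s/2}$, and the asymptotic \eqref{eq:W:x:asymptotic} yields the integrable estimate $|(\p_\theta w)\circ\psi_a(t)|\les (T_*-t)^{-2/3}$.

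\textbf{Lipschitz regularity of $\p_\theta u_r=r\p_\theta a$.} Differentiating \eqref{eq:a:evo} produces a transport equation for $\p_\theta a$ with linear coefficient $(\p_\theta w+\p_\theta z)/(1+\alpha)$ and source given by the quadratic forcing. Composing with $\psi_a$ and applying Gr\"onwall, the integrability of $(\p_\theta w)\circ\psi_a$ gives well-defined one-sided limits $\p_\theta a(\theta_*^\pm,T_*)$ together with a uniform $L^\infty$ bound in time, refining the estimate \eqref{eq:a:prime:bnd}. For the Lipschitz estimate one argues via the flow map: $\theta_0\mapsto\psi_a(T_*;\theta_0)$ is bi-Lipschitz on each side of the shock characteristic $\theta_0\mapsto\theta_*$, since its Jacobian $\exp(\int(\tfrac{\p_\theta w+\p_\theta z}{1+\alpha})\circ\psi_a\,dt)$ is bounded above and below by the same integrable-cusp estimate. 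A stability-of-the-flow argument, combined with a second differentiation of \eqref{eq:a:evo} executed piecewise in $\theta_0$, then yields the global Lipschitz bound on $\p_\theta a(\cdot,T_*)$.

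\textbf{Corner of $\p\Omega(T_*)$ and the main obstacle.} The boundary $\p\Omega(t)$ consists of the curves $\theta\mapsto\eta_u(r_i,\theta,t)$, $i=0,1$. Its tangent $\p_\theta\eta_u$ satisfies a linear ODE along the flow with coefficient matrix $(Du)\circ\eta_u$; the only entry not a priori integrable is $\p_\theta u_\theta=\tfrac{r}{2}(\p_\theta w+\p_\theta z)$, which is made integrable in time by the $(T_*-t)^{-2/3}$ bound above. Thus $\p_\theta\eta_u(r_i,\theta,T_*)$ is well-defined with one-sided limits at the image of $\theta_*$, and $\p\Omega(T_*)$ is a rectifiable Lipschitz curve. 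The main obstacle is the \emph{no better than} Lipschitz assertion for $\p_\theta u_r$ and the \emph{genuine} corner for $\p\Omega(T_*)$; both reduce to exhibiting nonzero jumps in the corresponding one-sided limits. Heuristically, this non-degeneracy should follow from the oddness of the leading asymptotic $\bar W_x(x)\sim -\tfrac13\operatorname{sgn}(x)|x|^{-2/3}$, which forces the time-integrated forcing $(\p_\theta w)\circ\psi_a$ to contribute with equal magnitude but opposite sign on the two sides of $\theta_*$. Making this rigorous --- i.e.\ verifying that these jumps are generically nonzero on the open set of admissible initial data and excluding accidental cancellations between the forcing contributions from the $w$-cusp and from the initial data --- is the principal technical task, and likely requires propagating the self-similar expansion of $W_x$ one order past $\bar W_x$ in a weighted norm, refining the argument of Section~\ref{sec:Holder:13}.
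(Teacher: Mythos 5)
You should note first that the paper offers no proof of this statement at all: it is stated, deliberately, as a conjecture in Section~\ref{sec:conclusion}, so the only question is whether your sketch actually closes it, and it does not. The part of your outline that does work is the separation argument: adapting Section~\ref{sec:z:prime}, the $a$-characteristic speed $\tfrac{1}{1+\alpha}(w+z)$ and the angular material speed $b=\tfrac{w+z}{2}$ each differ from $\dot\xi$ by an amount of size $\OO(\kappa_0)$ precisely because $\alpha>0$, so those trajectories separate from $\xi(t)$ at rate $T_*-t$, and \eqref{eq:W:x:asymptotic} then gives $\abs{(\p_\theta w)\circ\psi_a}\les (T_*-t)^{-\sfrac 23}$; this recovers boundedness of $\p_\theta a$ (already known from \eqref{eq:a:prime:bnd}) and existence of one-sided limits, and likewise shows $\p_\theta\eta_u$ has limits as $t\to T_*$, so $\p\Omega(T_*)$ is at least Lipschitz. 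None of this, however, is the content of the conjecture.

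The genuine gaps are two. First, your route to the Lipschitz bound for $\p_\theta u_r$ requires controlling $\p_\theta^2 a$, and differentiating \eqref{eq:a:evo} twice produces forcing terms containing $(\p_\theta w)^2$ and $(w\pm z)\,\p_\theta^2 w$. Along the separated characteristics $(\p_\theta w)^2\sim (T_*-t)^{-\sfrac 43}$, and $\p_\theta^2 w = e^{\sfrac{5s}{2}}W_{xx}$ is worse still: the bootstrap \eqref{eq:bootstrap:2@} only gives $\abs{W_{xx}}\les 12$, and even the expected sharp decay $\abs{W_{xx}}\sim \abs{x}^{-\sfrac 53}$ yields $(T_*-t)^{-\sfrac 53}$ at $\abs{x}\sim (T_*-t)^{-\sfrac 12}$. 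None of these is integrable in time, so the Gr\"onwall/flow-map step you invoke (``second differentiation executed piecewise'') does not close with the estimates available in the paper; one would need either a cancellation structure in the second-order forcing or a weighted decay estimate for $W_{xx}$ at spatial infinity, in the spirit of the $\mathcal V$ argument of Section~\ref{sec:Holder:13} but one derivative higher, which the paper does not establish. Second, the assertions that actually make the statement interesting --- that $\p_\theta u_r$ is \emph{no better} than Lipschitz and that $\p\Omega(T_*)$ has a \emph{genuine} corner --- require showing the one-sided limits jump, and you concede this is only heuristic (oddness of $\bar W_x$ at infinity), with no mechanism to exclude cancellations. So what you have is a plausible program, consistent with why the authors left this as a conjecture, but not a proof.
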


\appendix

\section{Toolshed}\label{sec:toolshed}

\begin{lemma}
\label{lem:transport}
Assume that the function $f = f(x,s)$ obeys the forced and damped transport equation
\[
\partial_s f + {\mathcal D} f + {\mathcal U} \partial_x f = {\mathcal F}
\]
for $s \in [s_0,\infty)$ and $x\in \RR$. Assume that ${\mathcal U}$, ${\mathcal D}$ and ${\mathcal F}$ are smooth, that 
\[
\inf_{(x,s) \in \RR\times[s_0,\infty)} {\mathcal D}(x,s) \geq \lambda_D
\]
for some $\lambda_D  \in \RR$, and that 
\[
\norm{{\mathcal F}(\cdot,s)}_{L^\infty(\RR)} \leq  {\mathcal F_0} e^{- s \lambda_F }
\]
for all $s\geq s_0$, for some $ {\mathcal F_0}\in [0,\infty)$ and $\lambda_F \in \RR$. 
For $\lambda_F < \lambda_D$ the function $f$ obeys the estimate
\begin{align}
\norm{f(\cdot,s)}_{L^\infty} \leq \norm{f(\cdot,s_0)}_{L^\infty} e^{-\lambda_D (s-s_0)} + \frac{{\mathcal F}_0}{\lambda_D - \lambda_F} e^{-s \lambda_F} \, .
\label{eq:lem:transport:1}
\end{align}
for all $s\geq s_0$.
On the other hand, for $\lambda_F > \lambda_D$, we have
\begin{align}
\norm{f(\cdot,s)}_{L^\infty} \leq \norm{f(\cdot,s_0)}_{L^\infty} e^{-\lambda_D (s-s_0)} + \frac{{\mathcal F}_0 e^{- s_0 \lambda_F}}{\lambda_F - \lambda_D} e^{-\lambda_D (s-s_0)} \, .
\label{eq:lem:transport:2}
\end{align}
for all $s\geq s_0$.
\end{lemma}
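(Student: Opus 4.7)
The plan is to reduce the PDE to an ODE along characteristics, then apply an integrating factor and Gr\"onwall-type estimates. Define the flow $\psi = \psi(x,s)$ by $\partial_s \psi(x,s) = \mathcal{U}(\psi(x,s),s)$ with $\psi(x,s_0) = x$; since $\mathcal{U}$ is smooth, this ODE is globally solvable and $\psi(\cdot,s)$ is a $C^1$ diffeomorphism of $\RR$ for every $s \geq s_0$. Composing the equation with $\psi$, the transport term disappears and we obtain
\begin{align*}
\partial_s\bigl(f(\psi(x,s),s)\bigr) + \mathcal{D}(\psi(x,s),s)\, f(\psi(x,s),s) = \mathcal{F}(\psi(x,s),s).
\end{align*}
This is a linear scalar ODE in $s$ for the function $F(x,s) := f(\psi(x,s),s)$.

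Next I would introduce the integrating factor $\mu(x,s) := \exp\!\bigl(\int_{s_0}^{s} \mathcal{D}(\psi(x,s'),s')\, ds'\bigr)$, so that $\partial_s(\mu F) = \mu\, \mathcal{F}\!\circ\!\psi$ and hence
\begin{align*}
F(x,s) = \frac{1}{\mu(x,s)} f(x,s_0) + \int_{s_0}^{s} \frac{\mu(x,s')}{\mu(x,s)}\, \mathcal{F}(\psi(x,s'),s')\, ds'.
\end{align*}
The lower bound $\mathcal{D} \geq \lambda_D$ gives $\mu(x,s')/\mu(x,s) \leq e^{-\lambda_D(s-s')}$ and $1/\mu(x,s) \leq e^{-\lambda_D(s-s_0)}$, while the forcing hypothesis gives $|\mathcal{F}(\psi(x,s'),s')| \leq \mathcal{F}_0 e^{-\lambda_F s'}$. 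Thus, pointwise in $x$,
\begin{align*}
|F(x,s)| \leq \|f(\cdot,s_0)\|_{L^\infty}\, e^{-\lambda_D(s-s_0)} + \mathcal{F}_0\, e^{-\lambda_D s} \int_{s_0}^{s} e^{(\lambda_D-\lambda_F)s'}\, ds'.
\end{align*}
Taking the supremum in $x$ and using that $\psi(\cdot,s)$ is a bijection of $\RR$, the left side becomes $\|f(\cdot,s)\|_{L^\infty}$.

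It only remains to evaluate the time integral in the two cases. When $\lambda_F < \lambda_D$, the exponent $\lambda_D - \lambda_F$ is positive and the integral equals $(\lambda_D-\lambda_F)^{-1}(e^{(\lambda_D-\lambda_F)s} - e^{(\lambda_D-\lambda_F)s_0}) \leq (\lambda_D-\lambda_F)^{-1} e^{(\lambda_D-\lambda_F)s}$; multiplying by $\mathcal{F}_0 e^{-\lambda_D s}$ yields exactly the second term in \eqref{eq:lem:transport:1}. When $\lambda_F > \lambda_D$, the exponent is negative, so the integral is bounded by $(\lambda_F-\lambda_D)^{-1} e^{(\lambda_D-\lambda_F)s_0}$; multiplying by $\mathcal{F}_0 e^{-\lambda_D s}$ and rearranging gives precisely \eqref{eq:lem:transport:2}. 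There is no genuine obstacle in this argument; the only mild subtlety is that the lemma requires no assumption on the growth of $\mathcal{U}$ beyond smoothness, but because all estimates are pointwise along characteristics we never need to differentiate $\psi$ or invoke any global control of the flow — only its existence and the bijectivity of $\psi(\cdot,s)$, which follow from uniqueness of ODEs.
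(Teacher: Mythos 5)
Your proof is correct and follows essentially the same route as the paper: reduce the PDE to an ODE along the characteristic flow $\psi$, apply the integrating factor $\exp\bigl(\int_{s_0}^s\mathcal{D}\circ\psi\bigr)$, bound each term using $\mathcal{D}\geq\lambda_D$ and the decay of $\mathcal{F}$, and split the time integral into the two cases. The only presentational improvement is that you explicitly name $F(x,s)=f(\psi(x,s),s)$ on the left, whereas the paper's displayed identity slightly abuses notation by writing $f(x,s)$ there; your minor aside that smoothness alone guarantees global solvability of the characteristic ODE is not literally true, but this is harmless since the lemma is only ever applied with velocities of at most linear growth.
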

\begin{proof}[Proof of Lemma~\ref{lem:transport}]
Let $\p_s \psi = \mathcal{U} \circ \psi$ for $s> s_0$ and $\psi(x,s_0)=x$.   Then $ \tfrac{d}{ds}\left( e^{\int_{s_0}^ s ( \mathcal{D} \circ \psi )ds'} (f \circ \psi)  \right)  
= e^{\int_{s_0}^ s ( \mathcal{D} \circ \psi )ds'} (\mathcal{F} \circ \psi )$, from which it follows by integration that
$$
f(x,s) = f(x,s_0) e^{-\int_{s_0}^ s ( \mathcal{D} \circ \psi )dr}    +   \int_{s_0}^s  e^{-\int_{s'}^{s}( \mathcal{D} \circ \psi )ds''} (\mathcal{F} \circ \psi ) ds'  \,.
$$
From this identity, the inequalities \eqref{eq:lem:transport:1} and  \eqref{eq:lem:transport:2} immediately follow.
\end{proof}

The following lemma is a version of the maximum principle which is tailored to the needs of this paper.
\begin{lemma}
\label{lem:max:princ}
Assume that the function $f$ obeys the damped and non-locally forced transport equation
\begin{align}
\partial_s f(x,s) + {\mathcal D}(x,s) f(x,s) + {\mathcal U}(x,s) \partial_x f (x,s) = {\mathcal F}(x,s) + \int_{\RR} f(x',s) {\mathcal K}(x,x',s) d x'
\label{eq:nonlocal:transport}
\end{align}
for $s \in [s_0,\infty)$ and $x\in \RR$. Assume that the drift ${\mathcal D}$, the transport velocity ${\mathcal U}$, the forcing ${\mathcal F}$ and the kernel ${\mathcal K}$ are smooth functions, and assume we are given that the solution $f$ decays at spatial infinity: $\lim_{|x|\to \infty} \abs{f(x,s)} = 0$. Let $\Omega\subset \RR$ be a compact set, and assume that on its complement the damping obeys
\begin{align}
\inf_{(x,s) \in \Omega^c \times[s_0,\infty)} {\mathcal D}(x,s) \geq \lambda_D > 0 
\label{eq:max:princ:damping}
\end{align}
and that the forcing is bounded as
\begin{align}
\norm{{\mathcal F}(\cdot,s)}_{L^\infty(\Omega^c)} \leq  {\mathcal F_0} < \infty
\label{eq:max:princ:force}
\end{align}
for all $s\geq s_0$. For the kernel $\mathcal K$ we assume the estimate  
\begin{align}
\int_{\RR} \abs{\mathcal K(x,x',s)} dx' \leq \tfrac 34 {\mathcal D}(x,s)\qquad \mbox{for all} \qquad (x,s) \in \Omega^c \times [s_0,\infty)
\label{eq:max:princ:kernel}
\,.
\end{align}
Then, if for some $m>0$ we have 
\begin{align}
\norm{f(\cdot,s_0)}_{L^\infty( \mathbb{R} )} \leq \tfrac{1}{2} m \, , \qquad \mbox{and} \qquad \norm{f(\cdot,s)}_{L^\infty(\Omega)}\leq \tfrac{1}{2}m \, ,
\label{eq:max:princ:ass}
\end{align}
and the the forcing-to-damping relation
\begin{align}
m \lambda_D \geq 4 {\mathcal F_0} 
\label{eq:max:princ:cond}
\end{align}
holds, then the solution $f$ obeys 
\begin{align}
\norm{f(\cdot,s)}_{L^\infty(\RR)}\leq \tfrac{3 }{4}m
\label{eq:max:princ}
\end{align}
for all $s\geq s_0$. 
\end{lemma}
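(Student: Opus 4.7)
The plan is to prove Lemma~\ref{lem:max:princ} by a contradiction argument combining a maximum-principle identity at an extremum of $|f|$ with a scalar differential inequality for $g(s) := \|f(\cdot, s)\|_{L^\infty(\mathbb{R})}$. Since $f$ is smooth and decays at spatial infinity, for every $s\geq s_0$ the supremum is attained at some $x_*(s)\in\mathbb{R}$, and $g$ is Lipschitz in $s$. Assuming for contradiction that $g(\bar s) > \tfrac{3}{4}m$ at some $\bar s > s_0$, continuity together with $g(s_0)\leq m/2$ yields a first time $s_*\in(s_0,\bar s]$ with $g(s_*) = \tfrac{3}{4}m$, and $g\leq \tfrac{3}{4}m$ throughout $[s_0,s_*]$.

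Since $g(s_*) > m/2$, assumption \eqref{eq:max:princ:ass} rules out $x_*(s_*)\in\Omega$, so $x_*(s_*)\in\Omega^c$. At this point $\partial_x f(x_*(s_*),s_*) = 0$, and after replacing $f$ by $-f$ if necessary, $f(x_*(s_*),s_*) = g(s_*) > 0$. Evaluating \eqref{eq:nonlocal:transport} at $(x_*(s_*),s_*)$ and combining $|f(\cdot,s_*)|\leq g(s_*)$ with \eqref{eq:max:princ:damping}--\eqref{eq:max:princ:kernel}, I obtain the pointwise estimate
\[
\partial_s f(x_*(s_*),s_*) \leq \bigl(-1 + \tfrac{3}{4}\bigr)\mathcal{D}(x_*(s_*),s_*)\, g(s_*) + \mathcal{F}_0 \leq -\tfrac{\lambda_D}{4}\, g(s_*) + \mathcal{F}_0.
\]
A standard envelope (Rademacher / Danskin) argument upgrades this to a differential inequality $\dot g(s) \leq -\tfrac{\lambda_D}{4} g(s) + \mathcal{F}_0$ almost everywhere on $\{ s : g(s) > m/2 \}$.

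At $s = s_*$ the first-crossing property forces $\dot g(s_*)\geq 0$, whereas the displayed inequality combined with the forcing-damping hypothesis \eqref{eq:max:princ:cond} produces $\dot g(s_*) \leq -\tfrac{3 m \lambda_D}{16} + \mathcal{F}_0$, which the choice $m\lambda_D \geq 4 \mathcal{F}_0$ sends below zero with margin, closing the contradiction. Equivalently, integrating $\dot g \leq -\tfrac{\lambda_D}{4} g + \mathcal{F}_0$ from $s_0$ via Gronwall, using $g(s_0)\leq m/2$ and $4\mathcal{F}_0/\lambda_D \leq m$, yields $g(s)\leq \tfrac{3}{4}m$ directly. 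The main technical obstacle is the passage from the pointwise bound at the moving point $x_*(s)$ to an a.e.\ differential inequality on $g$, which is not $C^1$ in general: at times of unique maximizer, $g$ is differentiable with $\dot g(s) = \partial_s f(x_*(s),s)$, while at times with several maximizers the one-sided derivatives of $g$ are bounded by extremizing $\partial_s f$ over the maximizing set, and Rademacher's theorem recovers differentiability a.e. The fact that the supremum is attained rather than only approached --- which relies on the decay of $f$ at infinity --- is essential for this envelope argument.
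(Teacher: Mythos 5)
Your argument is essentially the same first-crossing/maximum-principle scheme the paper uses: the breakthrough point $(x_*,s_*)$ at which $|f|=\tfrac{3m}{4}$ must lie in $\Omega^c$ because $\tfrac{3m}{4}>\tfrac{m}{2}$, the spatial derivative $\partial_x f$ vanishes there, the kernel term absorbs at most $\tfrac34$ of the damping, and the residual $\tfrac14\mathcal{D}f$ should dominate the forcing. The problem is the final numerical step, which does not close as you claim: substituting the extreme case $\mathcal{F}_0=\tfrac{m\lambda_D}{4}$ (permitted by \eqref{eq:max:princ:cond}) into $-\tfrac{3m\lambda_D}{16}+\mathcal{F}_0$ yields $+\tfrac{m\lambda_D}{16}>0$, not a negative quantity, so no contradiction with $\partial_s f(x_*,s_*)\geq 0$ is obtained. (The paper's own chain $\mathcal{F}_0-\tfrac{3}{16}m\lambda_D\leq -\tfrac{\mathcal{F}_0}{4}$ carries the same arithmetic slip; one needs $m\lambda_D>\tfrac{16}{3}\mathcal{F}_0$ for strict negativity, or $m\lambda_D\geq\tfrac{20}{3}\mathcal{F}_0$ for the paper's claimed margin. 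This is harmless in all three applications of the lemma, where $\mathcal{F}_0=\mathcal{O}(\eps^{\sfrac{\delta}{2}})$ can be made tiny relative to $m\lambda_D$, but as stated the constant $4$ in \eqref{eq:max:princ:cond} is not large enough to yield the conclusion $\tfrac{3m}{4}$.)

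Two secondary points. Your proposed Gronwall ``shortcut'' does not recover $\tfrac{3m}{4}$ either: the differential inequality $\dot g\leq -\tfrac{\lambda_D}{4}g+\mathcal{F}_0$ is only available on $\{s: g(s)>\tfrac{m}{2}\}$ (otherwise the maximizer may sit in $\Omega$, where no bound on $\mathcal{D}$ or $\mathcal{F}$ is assumed), so you cannot integrate it from $s_0$; and even formally Gronwall gives the asymptotic bound $\tfrac{4\mathcal{F}_0}{\lambda_D}\leq m$, which is weaker than $\tfrac{3m}{4}$. Finally, the Rademacher/Danskin envelope machinery is unnecessary: since $f$ is smooth, the inequality $\partial_s f(x_*,s_*)\geq 0$ follows directly from $f(x_*,s)\leq\norm{f(\cdot,s)}_{L^\infty}<\tfrac{3m}{4}=f(x_*,s_*)$ for all $s<s_*$, which is exactly how the paper argues, and no a.e.\ differentiability of $s\mapsto\norm{f(\cdot,s)}_{L^\infty}$ is required.
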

\begin{proof}[Proof of Lemma~\ref{lem:max:princ}]
 Assume for the sake of contradiction that \eqref{eq:max:princ} fails. Then, by the smoothness of solutions to \eqref{eq:nonlocal:transport} and the assumption that the solution $f$ vanishes as $\abs{x} \to \infty$, there exists a {\em first time} $s_*$ and a  {\em location} $x_*$ such that $\abs{f(s_*,x_*)} = \sfrac{3m}{4}$.  In view of \eqref{eq:max:princ:ass} we must have $x_* \in \Omega^c$. We may first assume  that $f$ attains its {\em global maximum} at this point, i.e. that $f(s_*,x_*) = \sfrac{3m}{4}$. By the minimality of $s_*$, we must have 
$(\partial_s f)(x_*,s_*) \geq 0$.
We will prove that the opposite inequality holds, thereby contradicting the existence of the breakthrough point $(x_*,s_*)$. For this purpose, evaluate the forced and damped transport equation at $(x_*,s_*)$, and note that because $f$   attains its global maximum  at this point, we have $\p_x f (x_*,s_*) = 0$. Additionally, from the assumption on the kernel, we have
\begin{align*}
\abs{\int_{\RR} f(x',s_*) {\mathcal K}(x_*,x',s_*) d x'}  \leq \tfrac 34 \norm{f(\cdot,s_*)}_{L^\infty(\RR)} {\mathcal D}(x_*,s_*) = \tfrac{3}{4} f (x_*,s_*) {\mathcal D}(x_*,s_*)
\end{align*}
and therefore, using \eqref{eq:max:princ:cond} we obtain
\[
(\partial_s f)(x_*,s_*) \leq \abs{{\mathcal F}(x_*,s_*)} - \tfrac 14 {\mathcal D}(x_*,s_*) f(x_*,s_*) \leq {\mathcal F_0} -  \tfrac{3}{16}m \lambda_D \leq - \tfrac{ {\mathcal F_0}}{4}  < 0
\]
which yields the desired contradiction. 

If on the other hand $f$ attains its {\em global minimum at this point}, i.e. $f(s_*,x_*) = - \sfrac{3m}{4}$, then by the minimality of $s_*$, we must have 
$(\partial_s f)(x_*,s_*) \leq 0$. 
We prove that the opposite inequality holds, yielding the contradiction. For this purpose, evaluate the forced and damped transport equation at $(x_*,s_*)$, and note that because $f$   attains its global minimum  at this point, we have $\p_x f (x_*,s_*) = 0$. Also, we have 
\begin{align*}
\abs{\int_{\RR} f(x',s_*) {\mathcal K}(x_*,x',s_*) d x'}  \leq \tfrac 34 \norm{f(\cdot,s_*)}_{L^\infty(\RR)} {\mathcal D}(x_*,s_*) = - \tfrac{3}{4} f (x_*,s_*) {\mathcal D}(x_*,s_*)
\end{align*}
so that
\[
(\partial_s f)(x_*,s_*) \geq {\mathcal F}(x_*,s_*) - \tfrac 14 {\mathcal D}(x_*,s_*) f(x_*,s_*) \geq - {\mathcal F_0} +  \tfrac{3}{16} m\lambda_D \geq \tfrac{ {1}}{4} \mathcal F_0 > 0 \, .
\]
Therefore, the breakthrough point $(x_*,s_*)$ does not exist, concluding the proof of \eqref{eq:max:princ}.
\end{proof}

\subsection*{Acknowledgments} 
T.B. was supported by the NSF grant DMS-1600868.  S.S. was supported by the Department of Energy Advanced Simulation and Computing (ASC) Program. V.V. was supported by the NSF grant DMS-1652134. The authors are grateful to Tarek Elgindi for numerous conversations on the topic of self-similar blowup in fluids and for stimulating discussions in the early phase of this project.  We are also  grateful to Jared Speck who explained to us the results of the paper \cite{LuSp2018}, 
as well as  some of the history of the subject.

 
\begin{bibdiv}
\begin{biblist}

\bib{Al1999b}{article}{
      author={Alinhac, Serge},
       title={Blowup of small data solutions for a class of quasilinear wave
  equations in two space dimensions. {II}},
        date={1999},
        ISSN={0001-5962},
     journal={Acta Math.},
      volume={182},
      number={1},
       pages={1\ndash 23},
         url={https://doi.org/10.1007/BF02392822},
      review={\MR{1687180}},
}

\bib{Al1999a}{article}{
      author={Alinhac, Serge},
       title={Blowup of small data solutions for a quasilinear wave equation in
  two space dimensions},
        date={1999},
        ISSN={0003-486X},
     journal={Ann. of Math. (2)},
      volume={149},
      number={1},
       pages={97\ndash 127},
         url={https://doi.org/10.2307/121020},
      review={\MR{1680539}},
}

\bib{CaSmWa96}{article}{
      author={Cassel, K.W.},
      author={Smith, F.T.},
      author={Walker, J.D.A.},
       title={The onset of instability in unsteady boundary-layer separation},
        date={1996},
     journal={Journal of Fluid Mechanics},
      volume={315},
       pages={223\ndash 256},
}

\bib{ChFe2010}{article}{
      author={Chen, Gui-Qiang},
      author={Feldman, Mikhail},
       title={Global solutions of shock reflection by large-angle wedges for
  potential flow},
        date={2010},
        ISSN={0003-486X},
     journal={Ann. of Math. (2)},
      volume={171},
      number={2},
       pages={1067\ndash 1182},
         url={https://doi.org/10.4007/annals.2010.171.1067},
      review={\MR{2630061}},
}

\bib{ChFe2018}{book}{
      author={Chen, Gui-Qiang~G.},
      author={Feldman, Mikhail},
       title={The mathematics of shock reflection-diffraction and von
  {N}eumann's conjectures},
      series={Annals of Mathematics Studies},
   publisher={Princeton University Press, Princeton, NJ},
        date={2018},
      volume={197},
        ISBN={978-0-691-16055-9},
      review={\MR{3791458}},
}

\bib{ChHoHu19}{article}{
      author={Chen, J.},
      author={Hou, T.Y.},
      author={Huang, D.},
       title={On the finite time blowup of the {D}e {G}regorio model for the 3d
  {E}uler equation},
        date={2019},
     journal={arXiv:1905.06387},
}

\bib{Ch2007}{book}{
      author={Christodoulou, Demetrios},
       title={The formation of shocks in 3-dimensional fluids},
      series={EMS Monographs in Mathematics},
   publisher={European Mathematical Society (EMS), Z\"{u}rich},
        date={2007},
        ISBN={978-3-03719-031-9},
         url={https://doi.org/10.4171/031},
      review={\MR{2284927}},
}

\bib{Ch2019}{book}{
      author={Christodoulou, Demetrios},
       title={The shock development problem},
      series={EMS Monographs in Mathematics},
   publisher={European Mathematical Society (EMS), Z\"{u}rich},
        date={2019},
        ISBN={978-3-03719-192-7},
         url={https://doi.org/10.4171/192},
      review={\MR{3890062}},
}

\bib{ChKl1993}{book}{
      author={Christodoulou, Demetrios},
      author={Klainerman, Sergiu},
       title={The global nonlinear stability of the {M}inkowski space},
      series={Princeton Mathematical Series},
   publisher={Princeton University Press, Princeton, NJ},
        date={1993},
      volume={41},
        ISBN={0-691-08777-6},
      review={\MR{1316662}},
}

\bib{ChMi2014}{book}{
      author={Christodoulou, Demetrios},
      author={Miao, Shuang},
       title={Compressible flow and {E}uler's equations},
      series={Surveys of Modern Mathematics},
   publisher={International Press, Somerville, MA; Higher Education Press,
  Beijing},
        date={2014},
      volume={9},
        ISBN={978-1-57146-297-8},
      review={\MR{3288725}},
}

\bib{CoGhIbMa18}{article}{
      author={Collot, C.},
      author={Ghoul, T.-E.},
      author={Ibrahim, S.},
      author={Masmoudi, N.},
       title={On singularity formation for the two dimensional unsteady
  {P}randtl's system},
        date={2018},
     journal={arXiv:1808.05967},
}

\bib{CoGhMa2018}{article}{
      author={Collot, C.},
      author={Ghoul, T.-E.},
      author={Masmoudi, N.},
       title={Singularity formation for {B}urgers equation with transverse
  viscosity},
        date={2018},
     journal={arXiv:1803.07826},
}

\bib{CoGhMa19}{article}{
      author={Collot, C.},
      author={Ghoul, T.-E.},
      author={Masmoudi, N.},
       title={Unsteady separation for the inviscid two-dimensional prandtl's
  system},
        date={2019},
     journal={arXiv:1903.08244},
}

\bib{Da2010}{book}{
      author={Dafermos, Constantine~M.},
       title={Hyperbolic conservation laws in continuum physics},
     edition={Third},
      series={Grundlehren der Mathematischen Wissenschaften [Fundamental
  Principles of Mathematical Sciences]},
   publisher={Springer-Verlag, Berlin},
        date={2010},
      volume={325},
        ISBN={978-3-642-04047-4},
         url={https://doi.org/10.1007/978-3-642-04048-1},
      review={\MR{2574377}},
}

\bib{DaMa2018}{article}{
      author={Dalibard, A.-L.},
      author={Masmoudi, N.},
       title={Separation for the stationary {P}randtl equation},
        date={2018},
     journal={arXiv:1802.04039},
}

\bib{EgFo2009}{article}{
      author={Eggers, Jens},
      author={Fontelos, Marco~A.},
       title={The role of self-similarity in singularities of partial
  differential equations},
        date={2009},
        ISSN={0951-7715},
     journal={Nonlinearity},
      volume={22},
      number={1},
       pages={R1\ndash R44},
         url={https://doi.org/10.1088/0951-7715/22/1/001},
      review={\MR{2470260}},
}

\bib{El19}{article}{
      author={Elgindi, T.M.},
       title={Finite-time singularity formation for ${C}^{1,\alpha}$ solutions
  to the incompressible {E}uler equations on ${R}^3$},
        date={2019},
     journal={arXiv:1904.04795},
}

\bib{ElJe2016}{article}{
      author={Elgindi, T.M.},
      author={Jeong, I.-J.},
       title={Symmetries and critical phenomena in fluids},
        date={2016},
     journal={arXiv preprint arXiv:1610.09701},
}

\bib{GiKo1985}{article}{
      author={Giga, Y.},
      author={Kohn, R.V.},
       title={Asymptotically self-similar blow-up of semilinear heat
  equations},
        date={1985},
     journal={Communications on Pure and Applied Mathematics},
      volume={38},
      number={3},
       pages={297\ndash 319},
}

\bib{GuMeWiZu2005}{article}{
      author={Gu{\`e}s, O.},
      author={M\'{e}tivier, G.},
      author={Williams, M.},
      author={Zumbrun, K.},
       title={Existence and stability of multidimensional shock fronts in the
  vanishing viscosity limit},
        date={2005},
        ISSN={0003-9527},
     journal={Arch. Ration. Mech. Anal.},
      volume={175},
      number={2},
       pages={151\ndash 244},
         url={https://mathscinet.ams.org/mathscinet-getitem?mr=2118476},
      review={\MR{2118476}},
}

\bib{HoKlSpWo2016}{article}{
      author={Holzegel, G.},
      author={Klainerman, S.},
      author={Speck, J.},
      author={Wong, W. W.-Y.},
       title={Small-data shock formation in solutions to 3{D} quasilinear wave
  equations: an overview},
        date={2016},
        ISSN={0219-8916},
     journal={J. Hyperbolic Differ. Equ.},
      volume={13},
      number={1},
       pages={1\ndash 105},
         url={https://doi.org/10.1142/S0219891616500016},
      review={\MR{3474069}},
}

\bib{John1974}{article}{
      author={John, Fritz},
       title={Formation of singularities in one-dimensional nonlinear wave
  propagation},
        date={1974},
        ISSN={0010-3640},
     journal={Comm. Pure Appl. Math.},
      volume={27},
       pages={377\ndash 405},
         url={https://doi.org/10.1002/cpa.3160270307},
      review={\MR{0369934}},
}

\bib{KlRo2003}{article}{
      author={Klainerman, S.},
      author={Rodnianski, I.},
       title={Improved local well-posedness for quasilinear wave equations in
  dimension three},
        date={2003},
        ISSN={0012-7094},
     journal={Duke Math. J.},
      volume={117},
      number={1},
       pages={1\ndash 124},
         url={https://doi.org/10.1215/S0012-7094-03-11711-1},
      review={\MR{1962783}},
}

\bib{Kl1984}{inproceedings}{
      author={Klainerman, Sergiu},
       title={Long time behaviour of solutions to nonlinear wave equations},
        date={1984},
   booktitle={Proceedings of the {I}nternational {C}ongress of
  {M}athematicians, {V}ol. 1, 2 ({W}arsaw, 1983)},
   publisher={PWN, Warsaw},
       pages={1209\ndash 1215},
      review={\MR{804771}},
}

\bib{KlMa1980}{article}{
      author={Klainerman, Sergiu},
      author={Majda, Andrew},
       title={Formation of singularities for wave equations including the
  nonlinear vibrating string},
        date={1980},
        ISSN={0010-3640},
     journal={Comm. Pure Appl. Math.},
      volume={33},
      number={3},
       pages={241\ndash 263},
         url={https://doi.org/10.1002/cpa.3160330304},
      review={\MR{562736}},
}

\bib{Lax1964}{article}{
      author={Lax, Peter~D.},
       title={Development of singularities of solutions of nonlinear hyperbolic
  partial differential equations},
        date={1964},
        ISSN={0022-2488},
     journal={J. Mathematical Phys.},
      volume={5},
       pages={611\ndash 613},
         url={https://doi.org/10.1063/1.1704154},
      review={\MR{0165243}},
}

\bib{Li1979}{article}{
      author={Liu, T.~P.},
       title={Development of singularities in the nonlinear waves for
  quasilinear hyperbolic partial differential equations},
        date={1979},
        ISSN={0022-0396},
     journal={J. Differential Equations},
      volume={33},
      number={1},
       pages={92\ndash 111},
         url={https://mathscinet.ams.org/mathscinet-getitem?mr=540819},
      review={\MR{540819}},
}

\bib{LuSp2018}{article}{
      author={Luk, Jonathan},
      author={Speck, Jared},
       title={Shock formation in solutions to the 2{D} compressible {E}uler
  equations in the presence of non-zero vorticity},
        date={2018},
        ISSN={0020-9910},
     journal={Invent. Math.},
      volume={214},
      number={1},
       pages={1\ndash 169},
         url={https://doi.org/10.1007/s00222-018-0799-8},
      review={\MR{3858399}},
}

\bib{Ma1983b}{article}{
      author={Majda, A.},
       title={The existence of multidimensional shock fronts},
        date={1983},
        ISSN={0065-9266},
     journal={Mem. Amer. Math. Soc.},
      volume={43},
      number={281},
       pages={v+93},
         url={https://doi.org/10.1090/memo/0281},
      review={\MR{699241}},
}

\bib{Ma1983}{article}{
      author={Majda, A.},
       title={The stability of multidimensional shock fronts},
        date={1983},
        ISSN={0065-9266},
     journal={Mem. Amer. Math. Soc.},
      volume={41},
      number={275},
       pages={iv+95},
         url={https://doi.org/10.1090/memo/0275},
      review={\MR{683422}},
}

\bib{Ma1984}{book}{
      author={Majda, A.},
       title={Compressible fluid flow and systems of conservation laws in
  several space variables},
      series={Applied Mathematical Sciences},
   publisher={Springer-Verlag, New York},
        date={1984},
      volume={53},
        ISBN={0-387-96037-6},
         url={https://doi.org/10.1007/978-1-4612-1116-7},
      review={\MR{748308}},
}

\bib{MaMeRa14}{article}{
      author={Martel, Y.},
      author={Merle, F.},
      author={Rapha\"{e}l, P.},
       title={Blow up for the critical generalized {K}orteweg--de {V}ries
  equation. {I}: {D}ynamics near the soliton},
        date={2014},
        ISSN={0001-5962},
     journal={Acta Math.},
      volume={212},
      number={1},
       pages={59\ndash 140},
         url={https://mathscinet.ams.org/mathscinet-getitem?mr=3179608},
      review={\MR{3179608}},
}

\bib{Merle96}{article}{
      author={Merle, F.},
       title={Asymptotics for {$L^2$} minimal blow-up solutions of critical
  nonlinear {S}chr\"{o}dinger equation},
        date={1996},
        ISSN={0294-1449},
     journal={Ann. Inst. H. Poincar\'{e} Anal. Non Lin\'{e}aire},
      volume={13},
      number={5},
       pages={553\ndash 565},
         url={https://mathscinet.ams.org/mathscinet-getitem?mr=1409662},
      review={\MR{1409662}},
}

\bib{MeRa05}{article}{
      author={Merle, F.},
      author={Raphael, P.},
       title={The blow-up dynamic and upper bound on the blow-up rate for
  critical nonlinear {S}chr\"{o}dinger equation},
        date={2005},
        ISSN={0003-486X},
     journal={Ann. of Math. (2)},
      volume={161},
      number={1},
       pages={157\ndash 222},
         url={https://mathscinet.ams.org/mathscinet-getitem?mr=2150386},
      review={\MR{2150386}},
}

\bib{MeRaRo13}{article}{
      author={Merle, F.},
      author={Rapha\"{e}l, P.},
      author={Rodnianski, I.},
       title={Blowup dynamics for smooth data equivariant solutions to the
  critical {S}chr\"{o}dinger map problem},
        date={2013},
        ISSN={0020-9910},
     journal={Invent. Math.},
      volume={193},
      number={2},
       pages={249\ndash 365},
         url={https://mathscinet.ams.org/mathscinet-getitem?mr=3090180},
      review={\MR{3090180}},
}

\bib{MeZa97}{article}{
      author={Merle, F.},
      author={Zaag, H.},
       title={Stability of the blow-up profile for equations of the type
  {$u_t=\Delta u+|u|^{p-1}u$}},
        date={1997},
        ISSN={0012-7094},
     journal={Duke Math. J.},
      volume={86},
      number={1},
       pages={143\ndash 195},
         url={https://mathscinet.ams.org/mathscinet-getitem?mr=1427848},
      review={\MR{1427848}},
}

\bib{MeZa15}{article}{
      author={Merle, F.},
      author={Zaag, H.},
       title={On the stability of the notion of non-characteristic point and
  blow-up profile for semilinear wave equations},
        date={2015},
        ISSN={0010-3616},
     journal={Comm. Math. Phys.},
      volume={333},
      number={3},
       pages={1529\ndash 1562},
         url={https://mathscinet.ams.org/mathscinet-getitem?mr=3302641},
      review={\MR{3302641}},
}

\bib{Met2001}{incollection}{
      author={M\'{e}tivier, G.},
       title={Stability of multidimensional shocks},
        date={2001},
   booktitle={Advances in the theory of shock waves},
      series={Progr. Nonlinear Differential Equations Appl.},
      volume={47},
   publisher={Birkh\"{a}user Boston, Boston, MA},
       pages={25\ndash 103},
         url={https://mathscinet.ams.org/mathscinet-getitem?mr=1842775},
      review={\MR{1842775}},
}

\bib{MiYu2017}{article}{
      author={Miao, S.},
      author={Yu, P.},
       title={On the formation of shocks for quasilinear wave equations},
        date={2017},
        ISSN={0020-9910},
     journal={Invent. Math.},
      volume={207},
      number={2},
       pages={697\ndash 831},
         url={https://doi.org/10.1007/s00222-016-0676-2},
      review={\MR{3595936}},
}

\bib{PoBeGuGr2008}{article}{
      author={Pomeau, Yves},
      author={Le~Berre, Martine},
      author={Guyenne, Philippe},
      author={Grilli, Stephan},
       title={Wave-breaking and generic singularities of nonlinear hyperbolic
  equations},
        date={2008},
        ISSN={0951-7715},
     journal={Nonlinearity},
      volume={21},
      number={5},
       pages={T61\ndash T79},
         url={https://doi.org/10.1088/0951-7715/21/5/T01},
      review={\MR{2412317}},
}

\bib{Ri1860}{article}{
      author={Riemann, Bernhard},
       title={{\"{U}}ber die {F}ortpflanzung ebener {L}uftwellen von endlicher
  {S}chwingungsweite},
        date={1860},
     journal={Abhandlungen der K\"oniglichen Gesellschaft der Wissenschaften in
  G\"ottingen},
      volume={8},
       pages={43\ndash 66},
         url={http://eudml.org/doc/135717},
}

\bib{Si1985}{article}{
      author={Sideris, Thomas~C.},
       title={Formation of singularities in three-dimensional compressible
  fluids},
        date={1985},
        ISSN={0010-3616},
     journal={Comm. Math. Phys.},
      volume={101},
      number={4},
       pages={475\ndash 485},
         url={http://projecteuclid.org/euclid.cmp/1104114244},
      review={\MR{815196}},
}

\bib{Sp2016}{book}{
      author={Speck, Jared},
       title={Shock formation in small-data solutions to 3{D} quasilinear wave
  equations},
      series={Mathematical Surveys and Monographs},
   publisher={American Mathematical Society, Providence, RI},
        date={2016},
      volume={214},
        ISBN={978-1-4704-2857-0},
      review={\MR{3561670}},
}

\end{biblist}
\end{bibdiv}

\end{document}